\newcommand{\ot}{\otimes}
\newcommand{\ii}{\underline{\textbf{\textit{i}}}}
\newcommand{\jj}{\underline{\textbf{\textit{j}}}}
\newcommand{\kk}{\underline{\textbf{\textit{k}}}}
\newcommand{\mc}[1]{\mathcal{#1}}
\newcommand{\mC}{\mathcal{C}}
\newcommand{\mH}{\mathcal{H}}
\newcommand{\mTL}{\mathcal{TL}}
\newcommand{\mHC}{\mathcal{HC}}
\newcommand{\mTLC}{\mathcal{TLC}}
\newcommand{\mV}{\mathcal{V}}
\newcommand{\mL}{\mathcal{L}}
\newcommand{\qtwo}{\left[2\right]}
\newcommand{\Foam}{\rm{Foam}}
\newcommand{\Hom}{{\rm Hom}}
\newcommand{\Mor}{{\rm Mor}}
\newcommand{\HOM}{{\rm HOM}}
\renewcommand{\to}{\rightarrow}
\newcommand{\id}{{\rm id}}
\newcommand{\define}{\stackrel{\mbox{\scriptsize{def}}}{=}}
\newcommand{\End}{{\rm End}}
\newcommand{\gdim}{{\rm gdim}}     
\newcommand{\Rbim}{R{\rm -bim}}
\newcommand{\Ztt}{\ensuremath{\mathbbm{Z}\left[t,t^{-1}\right]}}
\newcommand{\Zttt}{\ensuremath{\mathbbm{Z}\left[\left[t, t^{-1}\right]\right]}}
\def\C{{\mathbbm C}}
\def\N{{\mathbbm N}}
\def\R{{\mathbbm R}}
\def\Z{{\mathbbm Z}}
\def\1{\mathbbm{1}}
\let\hat=\widehat
\let\tilde=\widetilde
\let\phi=\varphi
\let\epsilon=\varepsilon
\newcommand{\ig}[2]{\vcenter{\xy (0,0)*{\includegraphics[scale=#1]{fig/#2}} \endxy}}
\newcommand{\igv}[2]{\vcenter{\xy (0,0)*{\reflectbox{\includegraphics[scale=#1, angle=180]{fig/#2}}} \endxy}}
\newcommand{\igh}[2]{\vcenter{\xy (0,0)*{\reflectbox{\includegraphics[scale=#1]{fig/#2}}} \endxy}}
\newcommand{\igc}[2]{\begin{center} \includegraphics[scale=#1]{fig/#2} \end{center}}
\newcommand{\plabel}[3]{{
\labellist
\small\hair 2pt
\pinlabel #1 at #2 #3
\endlabellist
}}
\newcommand{\plabeltwo}[6]{{
\labellist
\small\hair 2pt
\pinlabel #1 at #2 #3
\pinlabel #4 at #5 #6
\endlabellist
}}
\newcommand{\plabelthree}[9]{{
\labellist
\small\hair 2pt
\pinlabel #1 at #2 #3
\pinlabel #4 at #5 #6
\pinlabel #7 at #8 #9
\endlabellist
}}
\newtheorem{prop}{Proposition}[section]
\newtheorem{lemma}[prop]{Lemma}
\newtheorem{thm}[prop]{Theorem}
\newtheorem{cor}[prop]{Corollary}
\newtheorem{claim}[prop]{Claim}
\theoremstyle{definition}
\newtheorem{defn}[prop]{Definition}
\newtheorem{example}[prop]{Example}
\newtheorem{notation}[prop]{Notation}
\theoremstyle{remark}
\newtheorem{remark}[prop]{Remark}
\numberwithin{equation}{section}
\title{A Diagrammatic Temperley-Lieb Categorification}
\author{Ben Elias}
\begin{document} 

\baselineskip 14pt
\maketitle
 
\setcounter{tocdepth}{1}
\tableofcontents

\begin{abstract} The monoidal category of Soergel bimodules categorifies the Hecke algebra of a finite Weyl group. In the case of the symmetric group, morphisms in this category can be
drawn as graphs in the plane. We define a quotient category, also given in terms of planar graphs, which categorifies the Temperley-Lieb algebra. Certain ideals appearing in this
quotient are related both to the 1-skeleton of the Coxeter complex and to the topology of 2D cobordisms. We demonstrate how further subquotients of this category will categorify the
irreducible modules of the Temperley-Lieb algebra. \end{abstract}

\section{Introduction} 

A goal of the categorification theorist is to replace interesting endomorphisms of a vector space with interesting endofunctors of a category. The question is: what makes these functors
interesting? In the pivotal paper of Chuang and Rouquier \cite{CR}, a fresh paradigm emerged. They noticed that by specifying structure on the natural transformations (morphisms) between
these functors one obtains more useful categorifications (in this case, the added utility is a certain derived equivalence). The categorification of quantum groups by Rouquier
\cite{Rou3}, Lauda \cite{Lau}, and Khovanov and Lauda \cite{KL3} has shown that categorifying an algebra $A$ itself (with a category $\mc{A}$) will specify what this additional structure
should be for a categorification of any representation of that algebra: a functor from $\mc{A}$ to an endofunctor category. That their categorifications $\mc{A}$ provide the ``correct"
extra structure is confirmed by the fact that existing geometric categorifications conform to it (see \cite{VV}) and that irreducible representations of $A$ can be categorified in
this framework (see \cite{LV,HS}). The salient feature of these categorifications is that, instead of being defined abstractly, the morphisms are presented by generators and relations,
making it straightforward to define functors out of $\mc{A}$.

In the case of the Hecke algebra $\mH$, categorifications have existed for some time, in the guise of category $\mc{O}$ or perverse sheaves on the flag variety. In \cite{Soe1} Soergel
rephrased these categorifications in a more combinatorial way, constructing an additive categorification of $\mH$ by a certain full monoidal subcategory $\mHC$ of graded $R$-bimodules,
where $R$ is a polynomial ring. Objects in this full subcategory are called \emph{Soergel bimodules}. There are deep connections between Soergel bimodules, representation theory, and
geometry, and we refer the reader to \cite{Soe1,Soe2,Soe3,Soe4} for more details. Categorifications using category $\mc{O}$ and variants thereof are common in the literature, and often Soergel bimodules are used to aid calculations (see, for instance, \cite{KMS,MS,Str}).

In \cite{EKh}, the author and M. Khovanov provide (in type $A$) a presentation of $\mHC$ by generators and relations, where morphisms can be viewed diagrammatically as decorated graphs in
a plane. To be more precise, the diagrammatics are for a smaller category $\mc{HC}_1$, the (ungraded) category of \emph{Bott-Samelson bimodules}, described in Section \ref{subsec-hecke}.
Soergel bimodules are obtained from $\mHC_1$ by taking the graded Karoubi envelope. This is in exact analogy with the procedures of Khovanov and Lauda in \cite{KL3} and related papers.

The Temperley-Lieb algebra $\mTL$ is a well-known quotient of $\mH$, and it can be categorified by a quotient $\mTLC$ of $\mHC$, as this paper endeavors to show. Thus we have a naturally
arising categorification by generators and relations, and we expect it to be a useful one. Objects in $\mTLC$ can no longer be viewed as $R$-bimodules (though their Hom spaces will be
$R$-bimodules), so that diagrammatics provide the simplest way to define the category.

The most complicated generator of $\mHC$ is killed in the quotient to $\mTLC$, making $\mTLC$ easy to describe diagrammatically in its own right. Take a category where objects are
sequences of indices between $1$ and $n$ (denoted $\ii$). Morphisms will be given by (linear combinations of) collections of graphs $\Gamma_i$ embedded in $\R \times [0,1]$, one for each
index $i \in \{1,\ldots,n\}$, such that the graphs have only trivalent or univalent vertices, and such that $\Gamma_i$ and $\Gamma_{i+1}$ are disjoint. Each graph will have a degree,
making Hom spaces into a graded vector space. The intersection of the graphs with $\R \times \{0\}$ and $\R \times \{1\}$ determine the source and target objects respectively. Finally,
some local graphical relations are imposed on these morphisms. This defines $\mTLC_1$, and we take the graded Karoubi envelope to obtain $\mTLC$.

The proof that $\mTLC$ categorifies $\mTL$ uses a method similar to that in \cite{EKh}. We show first that
$\mTLC_1$ is a \emph{potential categorification} of $\mTL$, in the sense described in section
\ref{subsec-temperley}. Categorifications and potential categorifications define a pairing on $\mTL$ given by
$([M],[N])=\gdim \Hom_{\mTLC_1}(M,N)$, the graded dimension which takes values in $\Zttt$. Equivalently, it
defines a trace on $\mTL$ via $\epsilon([M])=\gdim \Hom(\1,M)$ where $\1$ is the monoidal identity (see Section
\ref{subsec-hecke}). The difficult part is to prove the following lemma.

\begin{lemma} \label{mainlemma} The trace induced on $\mTL$ from $\mTLC_1$ is the map $\epsilon_{\rm{cat}}$ defined in Section \ref{subsec-temperley}. \end{lemma}

Given this lemma, it is surprisingly easy (see Section \ref{subsec-usingpairing}) to show the main theorem.

\begin{thm} \label{maintheorem} Let $\mTLC_2$ be the graded additive closure of $\mTLC_1$, and let $\mTLC$ be the graded Karoubi envelope of $\mTLC_1$. Then $\mTLC_2$ is Krull-Schmidt
and idempotent-closed, so $\mTLC_2 \cong \mTLC$, and $\mTLC$ categorifies $\mTL$. \end{thm}

To prove the lemma, we note that there is a convenient set of elements in $\mTL$, the \emph{non-repeating} monomials, whose values determine any pairing; hence, there is a convenient set
of objects whose Hom spaces will determine all Hom spaces. If $\ii$ is a non-repeating sequence, the Hom space we must calculate is (up to shift) a quotient
of $R$ by a two-sided ideal $I_{\ii}$. We use graphical methods to determine these rings explicitly, giving generators for the ideals in $R$ which define them. As an interesting side
note, these ideals also occur elsewhere in nature.

\begin{prop} Let $V$ be the reflection representation of $S_{n+1}$, and identify $R$ with its coordinate ring. Let $Z$ be the union of all the lines in $V$ which are intersections of
reflection-fixed hyperplanes, and let $I \subset R$ be the ideal which gives the reduced scheme structure on $Z$. Then Hom spaces in $\mTLC$ are $R/I$-bimodules, and the ideals $I_{\ii}$
cut out subvarieties of $Z$ given by lines with certain transverseness properties (see Section \ref{subsec-weyllines} for details). \end{prop}

Also in Section \ref{subsec-weyllines} we give a topological interpretation of the ideals $I_{\ii}$, using a functor defined by Vaz \cite{Vaz}.

Now, let $\mTL(J_i)$ be the parabolic subalgebra of $\mTL$ given by ignoring the index $i$, and let $V^i$ be the induced (right) representation from the sign representation of
$\mTL(J_i)$. Such an induced representation is useful because it is a quotient of $\mTL$, and also contains an irreducible module $L^i$ of $\mTL$ as a submodule. All irreducibles can be
constructed this way.

We provide a diagrammatic categorification of $V^i$ as a quotient $\mV^i$ of $\mTLC$, and a categorification of $L^i$ as a full subcategory $\mL^i$ of $\mV^i$, in a fashion analogous to
quantum group categorifications. Having found a diagrammatic categorification $\mC$ of the positive half $U^+$ of the quantum group, Khovanov and Lauda in \cite{KL1} conjectured that
highest weight modules (naturally quotients of $U^+$) could be categorified by quotients of $\mC$ by the appearance of certain pictures on the \emph{left}. This approach was proven
correct by Lauda and Vazirani \cite{LV} (for the $U^+$-module structure), and then used by Webster to categorify tensor products \cite{Web}. Similarly, to obtain $\mV^i$ we mod out
$\mTLC$ by diagrams where any index except $i$ appears on the left. The proof that this works is similar in style to the proof of Theorem \ref{maintheorem}: one calculates the dimension of all Hom spaces by calculating enough Hom spaces to specify a unique pairing on $V^i$, and then uses simple arguments to identify the Grothendieck group.

\begin{thm} The category $\mV^i$ is idempotent-closed and Krull-Schmidt. Its Grothendieck group is isomorphic to $V^i$. Letting $\mL^i$ be the full subcategory generated by
indecomposables which decategorify to elements of $L^i$, we have that $\mL^i$ is idempotent-closed and Krull-Schmidt, with Grothendieck group isomorphic to $L^i$. \end{thm}

A future paper will categorify all representations induced from the sign and trivial representations of parabolic subalgebras of $\mH$ and
$\mTL$. Induced representations were categorified more generally in \cite{MS} in the context of category $\mc{O}$, although not
diagrammatically. We believe that our categorification should describe what happens in \cite{MS} after applying Soergel's functor.

Soergel bimodules are intrinsically linked with braids, as was shown by Rouquier in \cite{Rou1, Rou2}, who used them to construct braid group
actions (these braid group actions also appear in the category $\mc{O}$ context, see \cite{AS}). As such, morphisms between Soergel bimodules
should correspond roughly to movies, and the graphs appearing in the diagrammatic presentation of the category $\mc{HC}$ should be
(heuristically) viewed as 2-dimensional holograms of braid cobordisms. This is studied in \cite{EKr}. The Temperley-Lieb quotient is associated
with the representation theory of $U_q(\mathfrak{sl}_2)$, for which braids all degenerate into 1-manifolds, and braid cobordisms degenerate into
surfaces with disorientations. There is a functor $\mc{F}$ from $\mc{TLC}$ to the category of disorientations constructed by Vaz \cite{Vaz}. The
functor $\mc{F}$ is faithful (though certainly not full) as we remark in Section \ref{subsec-weyllines}. This in turn yields a topological
motivation of the variety $Z$ and its subvarieties $Z^\prime$. Because $\mc{F}$ is not full, there might be actions of $\mTLC$ that do not
extend to actions of disoriented cobordisms. Cobordisms have long been a reasonable candidate for morphisms in Temperley-Lieb categorifications,
although we hope $\mTLC$ will provide a useful substitute, with more explicit and computable Hom spaces.

Categorification and the Temperley-Lieb algebra have a long history. Khovanov in \cite{Kho} constructed a categorification of $\mTL$ using a
TQFT, which was slightly generalized by Bar-Natan in \cite{BN}. This was then used to categorify the Jones polynomial. Bernstein, Frenkel and
Khovanov in \cite{BFK} provide a categorical action of the Temperley-Lieb algebra by Zuckerman and projective functors on category $\mc{O}$.
Stroppel \cite{Str} showed that this categorical action extends to the full tangle algebroid, and also investigated the natural transformations
between projective functors. Recent work of Brundan and Stroppel \cite{BS} connects these Temperley-Lieb categorifications to
Khovanov-Lauda-Rouquier algebras, among other things. We hope that our diagrammatics will help to understand the morphisms in these
categorifications.

The organization of this paper is as follows. Chapter \ref{sec-preliminaries} will provide a quick overview of the Hecke and Temperley-Lieb
algebras, and the diagrammatic definition of the category $\mc{HC}$. Chapter \ref{sec-tlc} begins by defining the quotient category
diagrammatically in its own right (which makes a thorough understanding of the diagrammatic calculus for $\mc{HC}$ unnecessary). Section
\ref{subsec-usingpairing} proves Theorem \ref{maintheorem}, modulo Lemma \ref{mainlemma} which requires all the work. The remaining sections of
that chapter do all the work, and starting with Section \ref{subsec-gradeddim} one will not miss any important ideas if one skips the proofs.
Chapter \ref{sec-repns} begins with a discussion of cell modules for $\mc{TL}$ and certain other modules, and then goes on to categorify these
modules, requiring only very simple diagrammatic arguments.

This paper is reasonably self-contained. We do not require familiarity with \cite{EKh}, and do not use any
results other than Corollary \ref{freemodules} below. We do quote some results for motivational reasons, but
the difficult graphical arguments of that paper can often be drastically simplified for the Temperley-Lieb
setting, so that we provide easier proofs for the results we need. Familiarity with diagrammatics for monoidal
categories with adjunction would be useful, and \cite{Lau} provides a good introduction. More details on
preliminary topics can be found in \cite{EKh}.  

\vspace{0.06in}

{\bf Acknowledgments.} 

The author was supported by NSF grants DMS-524460 and DMS-524124. The author would like to thank Mikhail
Khovanov, Catharina Stroppel, and the referees for many thoughtful comments.

\section{Preliminaries}
\label{sec-preliminaries}

\begin{notation} Fix $n \in \N$, and let $I={1,\ldots,n}$ index the vertices of the Dynkin diagram $A_n$. We use the word \emph{index} for an element of $I$, and the letters
$i,j$ always represent indices. Indices $i \ne j$ are \emph{adjacent} if $|i-j|=1$, and \emph{distant} if $|i-j| \ge 2$, and questions of \emph{adjacency} always
refer to the Dynkin diagram, not the position of indices in a word or picture. \end{notation}

\begin{notation} Let $W=S_{n+1}$ with simple reflections $s_i = (i, i+1)$. Let $\Bbbk$ be a field of characteristic not dividing $2(n+1)$; all vector spaces will be over this field. Let
$R=\Bbbk[x_1,\ldots, x_{n+1}]/e_1$, where $e_1=x_1 + x_2 + \ldots + x_{n+1}$; it is a graded ring, with $\deg(x_i)=2$. We will abuse notation and refer to elements of $\Bbbk[x_1, \ldots,
x_{n+1}]$ and their images in $R$ in the same way, and will refer to both as \emph{polynomials}. Note that $R=\Bbbk[f_1,\ldots, f_{n}]$ where $f_i=x_i - x_{i+1}$, since
$x_1=\frac{nf_1+(n-1)f_2 + \ldots + f_n}{n+1}$ modulo $e_1$. The ring $R$ arises as the coordinate ring of $V$, the reflection representation of $W$ (the span of the root system), and
$f_i$ are the simple coroots.

There is an obvious action of $S_{n+1}$ on $R$, which permutes the generators $x_i$. For each index we have a \emph{Demazure operator} $\partial_i$, a map of degree $-2$ from
$R$ to the invariant subring $R^{s_i}$, which is $R^{s_i}$-linear and sends $R^{s_i}$ to $0$. Explicitly,  $\partial_i(f)=\frac{f - s_i(f)}{x_i-x_{i+1}}$.
\end{notation}

\begin{notation} Let $\overline{(\cdot)}$ be the $\Z$-linear involution of $\Ztt$ switching $t$ and $t^{-1}$. Given a $\Z$-linear map $\beta$ of $\Ztt$-modules, we call it
\emph{antilinear} if it is $\Ztt$-linear after twisting by $\overline{(\cdot)}$, or in other words if $\beta(tm)=t^{-1}\beta(m)$. 	We write $\qtwo \define t + t^{-1}$.

Let $A$ be a $\Ztt$-algebra. In this paper we always use the word \emph{trace} to designate a $\Ztt$-linear map $\epsilon \colon A \to \Zttt$ satisfying $\epsilon(xy)=\epsilon(yx)$. We
use the word \emph{pairing} or \emph{semilinear pairing} to denote a $\Z$-linear map $A \times A \to \Zttt$ which is $\Ztt$-linear in the second factor and $\Ztt$-antilinear in the
first factor. \end{notation}

%
\subsection{The Hecke algebra and the Soergel categorification}
\label{subsec-hecke}
%

We state here without proof a number of basic facts about the Hecke algebra, its traces, and Soergel's categorification. For more background, see Soergel's original definition of his
categorification \cite{Soe1}, or an easier version \cite{Soe4}. A similar overview with more discussion can be found in \cite{EKh}. A more in-depth introduction, connecting Soergel
bimodules to other parts of representation theory, can be found in \cite{MS}.

\begin{defn} Denote by $\mH$ the \emph{Hecke algebra} for $S_{n+1}$. It is a $\Ztt$-algebra, specified here by its \emph{Kazhdan-Lusztig presentation}: it has generators $b_i, i
\in I$ and relations

\begin{eqnarray}
b_i^2 & = & (t + t^{-1}) b_i  \label{eqn-bisq}\\
b_ib_j & = & b_jb_i \ \textrm{ for distant } i, j \label{eqn-bibj}\\
b_ib_jb_i + b_j & = & b_jb_ib_j + b_i \textrm{ for adjacent } i, j \label{eqn-bibpbi}.
\end{eqnarray}
\end{defn}

\begin{defn} Given two objects in a graded $\Bbbk$-linear (possibly additive) category $\mC$, where $\{1\}$ denotes the grading shift, the \emph{graded hom space} between them is the
graded vector space $\HOM(M,N)=\oplus_{n \in \Z} \Hom_{\mC}(M,N\{n\})$. Given a class of objects $\{M_{\alpha}\}$ in $\mC$, we can define a category with morphisms enriched in graded
vector spaces, whose objects are $\{M_{\alpha}\}$ and whose morphisms are $\HOM(M_\alpha,M_\beta)$. Let us call this an \emph{enriched full subcategory}, which we often shorten to the
adjective \emph{enriched}. While the enriched subcategory is neither additive nor graded, it has enough information to recover the hom spaces between grading shifts and direct sums of
objects $M_\alpha$ in $\mC$.

Let $\Rbim$ denote the category of finitely-generated graded (resp. ungraded) $R$-bimodules. Then $\HOM$ spaces in $\Rbim$ will be graded $R$-bimodules. For $i \in I$, let
$B_i \in \Rbim$ be defined by $B_i=R \ot_{R^{s_i}} R \{-1\}$, where $R^{s_i}$ is the invariant subring. A \emph{Bott-Samelson bimodule} is a tensor product $B_{i_1} \ot B_{i_2} \ot \cdots
\ot B_{i_d}$ in $\Rbim$, where here and henceforth $\ot$ denotes the tensor product over $R$. Let $\mHC_1$ be the enriched full subcategory generated by the Bott-Samelson bimodules; it is
a monoidal category, but is neither additive nor graded. Let $\mHC_2$ denote the full subcategory of $\Rbim$ given by all (finite) direct sums of grading shifts of Bott-Samelson
bimodules; it is monoidal, additive, and graded. Finally, let $\mHC$ denote the category of \emph{Soergel bimodules} or \emph{special bimodules}, the full subcategory of $\Rbim$ given by
all (finite) direct sums of grading shifts of \emph{summands} of Bott-Samelson bimodules; it is monoidal, additive, graded, and idempotent-closed. \end{defn}

One can observe that all bimodules in $\mHC$ are free and finitely generated when viewed as either left $R$-modules or right $R$-modules, and therefore the same is true of any $\HOM$
space. The following proposition parallels the Kazhdan-Lusztig presentation for $\mH$.

\begin{prop}
The category $\mHC_2$ is generated (as an additive, monoidal category) by objects $B_i, i \in I$ which satisfy

\begin{eqnarray}
B_i \otimes B_i & \cong & B_i\{1\} \oplus B_i\{-1\} \label{dc-ii} \\
B_i \otimes B_j & \cong & B_j \otimes B_i \label{dc-ij} \textrm{ for distant } i, j \\
B_i \otimes B_j \otimes B_i \oplus B_j & \cong & B_j \otimes B_i \otimes B_j \oplus B_i \label{dc-ipi} \textrm{ for adjacent } i, j. \end{eqnarray}
\end{prop}

From this we might expect the next result. 

\begin{prop} The Grothendieck ring $[\mHC_2]$ of $\mHC_2$ is isomorphic to $\mH$, with $[B_i]$ being sent to $b_i$, and $[R\{1\}]$ being sent to $t$. The Grothendieck ring $[\mHC]$ of
$\mHC$ is isomorphic to $\mH$ as well. \end{prop}

\begin{remark} \label{rmk-grothofsoergel} The proof of this statement is not immediately obvious. There is clearly a surjective morphism from $\mH$ to $[\mHC_2]$. When one takes the
idempotent closure of a category, one adds new indecomposables and can potentially enlarge the Grothendieck group. Soergel showed, via a support filtration, that all the new
indecomposables in $\mHC$ have symbols in $[\mHC]$ which can be reached from certain symbols in $[\mHC_2]$ by a unitriangular matrix (see \cite{Soe4}). Therefore, the Grothendieck
rings of $\mHC$ and $\mHC_2$ are equal. Since $\mHC$ is idempotent-closed and is embedded in $\Rbim$, it has the Krull-Schmidt property and the Grothendieck group behaves as one would
expect: it has a basis given by indecomposables. By classifying indecomposables and using the unitriangular matrix, Soergel showed that the map from $\mH$ to $[\mHC_2]$ is actually an
isomorphism.

It is important to note that one does not know what the image of the indecomposables of $\mHC$ are in $\mH$. The Soergel conjecture, still unproven in generality, proposes that the
indecomposables of $\mHC$ descend to the Kazhdan-Lusztig basis of $\mH$ (see \cite{Soe4}).
\end{remark}

\begin{notation} We write the monomial $b_{i_1}b_{i_2}\cdots b_{i_d} \in \mH$ as $b_{\ii}$ where $\ii=i_1\ldots i_d$ is a finite sequence of indices; by abuse of notation, we sometimes
refer to this monomial simply as $\ii$. If $\ii$ is as above, we say the monomial has \emph{length} $d=d(\ii)$. We call a monomial \emph{non-repeating} if $i_k \ne i_l$ for $k \ne l$,
and \emph{increasing} if $i_1 < i_2 < \ldots$. The empty set is a sequence of length 0, and $b_{\emptyset}=1$. Similarly, in $\mHC_1$, write $B_{i_1} \otimes \ldots \otimes B_{i_d}$ as
$B_{\ii}$. Note that $B_{\emptyset}=R$, the monoidal identity. For an arbitrary index $i$ and sequence $\ii$, we write $i \in \ii$ if $i$ appears in $\ii$. \end{notation}

Given two objects $M,N \in \Rbim$ we say they are \emph{biadjoint} if $M \ot -$ and $N \ot -$ are left and right adjoints of each other, and the same for $- \ot M$ and $- \ot N$. If $M$
and $N$ are biadjoint, so are $M\{1\}$ and $N\{-1\}$. We often want to specify additional compatibility between various adjunction maps, but we pass over the details here (see
\cite{Lau} for more information on biadjunction).

\begin{prop} Each object in $\mHC$ (resp. $\mHC_1$, $\mHC_2$) has a biadjoint, and $B_i$ is self-biadjoint. Let $\omega$ be the $t$-antilinear anti-involution on $\mH$ which fixes
$b_i$, i.e. $\omega(t^ab_{\ii})=t^{-a}b_{\sigma(\ii)}$ where $\sigma$ reverses the order of a sequence. There is a contravariant functor on $\mHC$ sending an object to its biadjoint,
and it descends on the Grothendieck ring to $\omega$. \end{prop}

\begin{defn} An \emph{adjoint pairing} on $\mH$ is a pairing where each $b_i$ is self-adjoint, so that $(x,b_iy)=(b_ix,y)$ and $(x,yb_i)=(xb_i,y)$ for all $x,y \in \mH$ and all $i \in
I$. Equivalently, for any $m \in \mH$, $(mx,y)=(x,\omega(m)y)$ and $(xm,y)=(x,y\omega(m))$. \end{defn}

There is a bijection between adjoint pairings $(,)$ and traces $\epsilon$, defined by letting $(x,y)=\epsilon(\omega(x)y)$, or conversely $\epsilon(y)=(1,y)$. Adjoint pairings appear
often in the literature, for instance \cite{Mur} (although they are usually $\Ztt$-linear in both factors, unlike our current semilinear definition). Semilinear adjoint pairings will be
crucially important, due to the following remark.

\begin{remark} \label{potentialcatfnH} Let $\mC$ be a monoidal category with objects $B_i$, such that $B_i$ are self-biadjoint. We assume that $\mC$ is additive and graded and has
isomorphisms (\ref{dc-ii})-(\ref{dc-ipi}). We call such a category a \emph{potential categorification} of $\mH$. In this case, there is a map of rings from $\mH$ to $[\mc{C}]$ sending
$b_i$ to $[B_i]$, and (under suitable finite-dimensionality conditions) we get an adjoint semilinear pairing on $\mH$ via $(b_{\ii},b_{\jj}) = \gdim \HOM_{\mc{C}}(B_{\ii},B_{\jj}) \in
\Zttt$, the graded dimension as a vector space. Denote the pairing and its associated trace map as $(,)_{\mC}$ and $\epsilon_{\mC}$.

Instead, we may assume $\mC$ is an enriched monoidal subcategory, containing objects $B_i$. The isomorphisms (\ref{dc-ii})-(\ref{dc-ipi}) typically have no meaning in this context, since
there are no grading shifts or direct sums, but we can require that they \emph{Yoneda-hold}, that is, they hold after the application of any $\Hom(-,X)$ functor (to graded vector spaces).
There is no definition of a Grothendieck ring in this case, but we still get an induced adjoint semilinear pairing induced by Hom spaces. We call this an \emph{enriched potential
categorification}. \end{remark}

We may use pairings to distinguish between different potential categorifications. The next proposition allows us to specify the pairing induced by a categorification by only
investigating certain $\HOM$ spaces.

\begin{prop} \label{prop-tracesincreasing} Traces on $\mH$ are uniquely determined by their values $\epsilon(b_{\ii})$ on increasing monomials $\ii$. Equivalently, adjoint pairings are
determined by $(1,b_{\ii})$ for $\ii$ increasing. If $\ii$ is non-repeating and $\jj$ is a permutation of $\ii$, then $\epsilon(b_{\ii})=\epsilon(b_{\jj})$. \end{prop}

We quickly sketch the proof. Moving an index from the beginning of a monomial to the end, or vice versa, will be called \emph{cycling} the monomial. It is clear, using biadjointness or
the definition of trace, that the value of $\epsilon$ is invariant under cycling. It is not difficult to show that any monomial in $W$ (in the letters $s_i$) will reduce, using the
Coxeter relations and cycling, to an increasing monomial. When the monomial is already non-repeating, one need only use cycling and $s_is_j=s_js_i$ for $i,j$ distant. Finally, using
induction on the length of the monomial, the same principle shows that any monomial in $\mH$ reduces to a linear combination of increasing monomials, and therefore $\epsilon$ is
determined by these.

The upshot is that, given a potential categorification, one knows the dimension of all $\HOM(B_{\ii},B_{\jj})$ so long as one knows the dimension of $\HOM(B_{\emptyset},B_{\ii})$ for
$\ii$ increasing. Note that not every choice of $(1,b_{\ii})$ for all increasing $\ii$ will yield a well-defined trace map.

Consider the adjoint pairing given by $\epsilon_{\rm{std}}(b_{\ii})=(1,b_{\ii})=t^d$ for $\ii$ non-repeating of length $d$. This is the
semilinear version of the pairing found in \cite{Mur}, which picks out the coefficient of the identity in the standard basis of $\mH$, and is
called the \emph{standard pairing}. Soergel showed that $\HOM(B_{\ii},B_{\jj})$ is a free graded left (or right) $R$-module of rank
$(b_{\ii},b_{\jj})$ using this pairing. In particular, for $\ii$ increasing, $\HOM(R,B_{\ii})$ is generated by a single element in degree
$d(\ii)$. Since the graded dimension of $R$ is $\frac{1}{(1-t^2)^n}$ we have that $(1,b_{\ii})_{\mHC}=\frac{t^d}{(1-t^2)^n}$ is a rescaling of
the standard pairing.

Now let $\epsilon$ be the quotient map $\mH \to \Ztt$ by the ideal generated by all $b_i$. It is a homomorphism to a commutative algebra, so it is a trace. The corresponding pairing
satisfies $(1,1)=1$ and $(x,y)=0$ for monomials $x,y$ if either monomial is not $1$. We call this the \emph{trivial pairing}, $\epsilon_{\rm{triv}}$.

%
\subsection{The Temperley-Lieb Algebra}
\label{subsec-temperley}
%

Here again we state without proof some basic facts about Temperley-Lieb algebras. They were originally defined
by Temperley and Lieb in \cite{TL}, and were given a topological interpretation by Kauffman \cite{Kau}. There
are many good expositions for the topic, such as \cite{FG,Westbury}.

\begin{defn} The \emph{Temperley-Lieb algebra} $\mTL$ is the $\Ztt$-algebra generated by $u_i, i \in I$ with relations

\begin{eqnarray}
u_i^2 & = & \qtwo u_i \label{eqn-uisq}\\
u_iu_j & = & u_ju_i \ \mathrm{for}\  |i-j|\ge 2 \label{eqn-uiuj}\\
u_iu_ju_i & = & u_i \label{eqn-TLipi}  \textrm{ for adjacent } i, j.
\end{eqnarray} 
\end{defn}

\begin{prop} For $i,j \in I$ adjacent, consider the element of $\mc{H}$ defined by $c_{ij} \define b_ib_jb_i - b_i = b_jb_ib_j - b_j$, where the equality arises from relation (\ref{eqn-bibpbi}). There is a
surjective map $\mH \to \mTL$ sending $b_i$ to $u_i$ for all $i \in I$, and whose kernel is generated by $c_{ij}$ for $i,j \in I$ adjacent. \end{prop}

Once again, write $u_{\ii}$ for a monomial in the above generators, with all the same conventions as before. The map $\omega$ descends from $\mH$ to $\mTL$, and we define an
\emph{adjoint pairing} on $\mTL$ in the same way, with $u_i$ replacing $b_i$ everywhere. The results of Proposition \ref{prop-tracesincreasing} apply equally to $\mTL$.

\begin{defn} \label{potentialcatfnTL}  A category $\mc{C}$ as in Remark \ref{potentialcatfnH} is a \emph{potential categorification} of $\mTL$ if it has objects $U_i$ satisfying
	
\begin{eqnarray}
	U_i \otimes U_i & \cong & U_i\{1\} \oplus U_i\{-1\} \label{tl-ii} \\
	U_i \otimes U_j & \cong & U_j \otimes U_i \label{tl-ij} \textrm{ for distant } i, j \\
	U_i \otimes U_j \otimes U_i & \cong & U_i \label{tl-ipi} \textrm{ for adjacent } i, j. 
\end{eqnarray}

We call it an \emph{enriched potential categorification} if it is an enriched category with objects $U_i$ such that these isomorphisms Yoneda-hold. \end{defn}

A permutation $\sigma \in S_{n+1}$ is called \emph{321-avoiding} if it never happens that, for $i<j<k$, $\sigma(i)>\sigma(j)>\sigma(k)$. It turns out that, using the Temperley-Lieb
relations, every monomial $u_{\jj}$ is equal to a scalar times some $u_{\ii}$ where $\ii$ is \emph{321-avoiding}, i.e. if viewed as a word in the symmetric group it represents a
\emph{reduced} expression for a 321-avoiding permutation. Moreover, between 321-avoiding monomials, the only further relations come from (\ref{eqn-uiuj}), and hence it is easy to pick out
a basis from this spanning set. See \cite{FG} for more details.

The Temperley-Lieb algebra has a well-known topological interpretation where an element of $\mTL$ is a linear
combination of crossingless matchings (isotopy classes of embedded planar 1-manifolds) between $n+1$ bottom
points and $n+1$ top points. Multiplication of crossingless matchings consists of vertical concatenation (where
$ab$ is $a$ above $b$), followed by removing any circles and replacing them with a factor of $\qtwo$. In this
picture, $u_i$ becomes the following:

\igc{.6}{TLgen}

The basis of 321-avoiding monomials agrees with the basis of crossingless matchings. Any increasing monomial is 321-avoiding. Increasing monomials are easy to visualize topologically, as
they have only ``right waves" and ``simple cups and caps." For example:

\begin{center} $u_1u_2u_3u_6u_7u_9 \mapsto \igv{.6}{TLexample}$ \end{center}

As an example of a monomial which is not increasing:

\begin{center} $u_4u_3u_1u_2 \mapsto \igh{.6}{TLexample2}$ \end{center}

Given a crossingless matching, its \emph{closure} is a configuration of circles in the punctured plane obtained
by wrapping the top boundary around the puncture to close up with the bottom boundary, as in Figure \ref{closure}.  Circle
configurations have two topological invariants: the number of circles and the \emph{nesting number}, which is
the number of circles which surround the puncture, and is equal to $n+1-2l \ge 0$ for some $l \ge 0$. Given a
scaling factor for each possible nesting number, one constructs a trace by letting $\epsilon(u_{\ii}) = c_k
\qtwo^m$ where $m$ is the number of circles in the closure of $u_{\ii}$ and $c_k$ is the scaling factor
associated to its nesting number $k$. To calculate $(x,y)$, we place $y$ below an upside-down copy of $x$ (or vice
versa), and then take the closure. All pairings/traces on $\mTL$ can be constructed this way, so they are all
topological in nature.

\begin{figure}
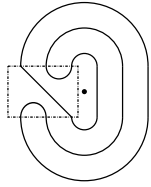

	\label{closure}
$\ig{.6}{closure}$
\caption{An example of the closure of a crossingless matching}
\end{figure}

The Temperley-Lieb algebra has a \emph{standard pairing} of its own for which $c_k=1$ for all nesting numbers $k$: $\epsilon_{\rm{std}}(u_{\ii})=\qtwo^m$ as above. One can check that
$\epsilon_{\rm{std}}(u_{\ii})=\qtwo^{n+1-d(\ii)}$ for an increasing monomial. This is \emph{not} related to the standard pairing on $\mH$, which does not descend to $\mc{TL}$. On the
other hand, $\epsilon_{\rm{triv}}$ clearly does descend to a pairing \emph{trivial pairing} on $\mc{TL}$, which only evaluates to a non-zero number when the nesting number is $n+1$.

It turns out that the pairing on $\mTL$ arising from our categorification will satisfy $(1,1)=\frac{t^n}{(1-t^2)}\qtwo^n - \frac{t^2}{(1-t^2)}$ and
$(1,u_{\ii})=\frac{t^n}{(1-t^2)}\qtwo^{n-d}$. We will call the associated trace $\epsilon_{\rm{cat}}$. Clearly $\epsilon_{\rm{cat}}= \frac{t^n}{(1-t^2)\qtwo} \epsilon_{\rm{std}} -
\frac{t^2}{(1-t^2)}\epsilon_{\rm{triv}}$. In particular, on any monomial $x \ne 1$, our trace will agree with a rescaling of the standard trace. When $n=1$, the algebras $\mTL$ and $\mH$
are already isomorphic, and $\epsilon_{\rm{cat}}$ agrees with the rescaling of the standard trace on $\mH$ discussed in the end of Section \ref{subsec-hecke}.

%
\subsection{Definition of Soergel diagrammatics}
\label{subsec-diagrammatics}
%

We now give a diagrammatic description of the category $\mHC_1$, as discovered in \cite{EKh}. Since the category to be defined will be equivalent to the category of Bott-Samelson
bimodules, we will abuse notation temporarily and use the same names.

\begin{defn} \label{defn-planargraph} In this paper, a \emph{planar graph in the strip} is a finite graph with boundary $(\Gamma, \partial \Gamma)$ embedded in $(\R \times [0,1],\R
\times \{0,1\})$. In other words, all vertices of $\Gamma$ occur in the interior $\R \times (0,1)$, and removing the vertices we have a 1-manifold with boundary whose
intersection with $\R \times \{0,1\}$ is precisely its boundary. This allows edges which connect two vertices, edges which connect a vertex to the boundary, edges which connect two
points on the boundary, and edges which form circles (closed 1-manifolds embedded in the plane).

We generally refer to $\R \times \{0,1\}$ as the \emph{boundary}, which consists of two components, the \emph{top boundary} $\R \times \{1\}$ and the \emph{bottom boundary} $\R \times
\{0\}$. We refer to a local segment of an edge which hits the boundary as a \emph{boundary edge}; there is one boundary edge for each point on the boundary of the graph. We use the
word \emph{component} to mean a connected component of a graph with boundary.

This definition clearly extends to other subsets of the plane with boundary, so that we can speak of planar graphs in a disk or planar graphs in an annulus. The annulus has two
boundary components, \emph{inner} and \emph{outer}. When we do not specify, we always mean a planar graph in the strip. \end{defn}

\begin{figure}
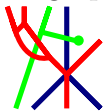

	\caption{An example of a planar graph in the strip, with colored edges}
	$\ig{.8}{planargraphex}$
\end{figure}

We will be drawing morphisms in $\mHC_1$ as planar graphs with edges labelled in $I$. Instead of putting labels everywhere, we color the edges, assigning a color to each index in $I$.
Henceforth, we use the term ``color" and ``index" interchangeably.

We now define $\mHC_1$ anew. Let $\mHC_1$ be the monoidal category, with hom spaces enriched over graded vector spaces, which is defined as follows.

\begin{defn} An object in $\mHC_1$ is given by a sequence of indices $\ii$, which is visualized as $d$ points on the real line $\R$, labelled or ``colored'' by the indices in order
from left to right. These objects are also called $B_{\ii}$. The monoidal structure on objects is concatenation of sequences. \end{defn}

\begin{defn} \label{defn-morphhc} Consider the set of \emph{isotopy classes} of planar graphs in the strip whose edges are colored by indices in $I$ such that only four types of
vertices exist: univalent vertices or ``dots'', trivalent vertices with all three adjoining edges of the same color, 4-valent vertices whose adjoining edges alternate in colors between
$i$ and $j$ distant, and 6-valent vertices whose adjoining edges alternate between $i$ and $j$ adjacent. This set has a grading, where the degree of a graph is +1 for each dot and -1
for each trivalent vertex; $4$-valent and $6$-valent vertices are of degree $0$. The allowable vertices, which we call ``generators," are pictured here: \igc{1}{generators}

The intersection of a graph with the boundary yields two sequences of colored points on $\R$, the top boundary $\ii$ and the bottom boundary
$\jj$. In this case, the graph is viewed as a morphism from $\jj$ to $\ii$. For instance, if ``blue" corresponds to the index $i$ and ``red" to
$j$, then the lower right generator is a degree 0 morphism from $jij$ to $iji$. Although this paper is easiest to read in color, it should be
readable in black and white: the colors appearing are typically either blue, red, green, or miscellaneous and irrelevant. We use the convention
throughout that blue (the darker color) is always adjacent to red (the middle color) and distant from green (the lighter color).

We let $\Hom_{\mHC_1}(B_{\ii},B_{\jj})$ be the graded vector space with basis given by planar graphs as above which have the correct top and bottom boundary, modulo relations
(\ref{assoc1}) through (\ref{assoc3}). As usual in a diagrammatic category, composition of morphisms is given by vertical concatenation (read from bottom to top), the monoidal
structure is given by horizontal concatenation, and relations are to be interpreted monoidally (that is, they may be applied locally inside any other planar diagram). \end{defn}

The relations are given in terms of colored graphs, but with no explicit assignment of indices to colors. They hold for \emph{any} assignment of indices to colors, so long as certain
adjacency conditions hold. We will specify adjacency for all pictures, although one can generally deduce it from the fact that 6-valent vertices only join adjacent colors, and 4-valent
vertices only join distant colors.

For example, these first four relations hold, with blue representing a generic index.

\begin{equation} \label{assoc1} \ig{.9}{Xdiagram} \end{equation}	
\begin{equation} \label{unit} \ig{.9}{dotfork} \end{equation}
\begin{equation} \label{needle} \ig{.9}{needle} \end{equation}
\begin{equation} \label{dotslidesame} \ig{.9}{dotslidesame} \end{equation}

We will repeatedly call a picture looking like (\ref{needle}) by the name ``needle." Note that a needle is not necessarily zero if there is something in the interior. Note that a
circle is just a needle with a dot attached, by (\ref{unit}), so that an empty circle evaluates to $0$.

\begin{remark} \label{treereduction} It is an immediate consequence of relations (\ref{assoc1}) and (\ref{unit}) that any \emph{tree} (connected graph with boundary without cycles) of one
color is equal to: \begin{itemize} \item If it has no boundary, two dots connected by an edge. Call the entire component a \emph{double dot}. \item If it has one boundary edge, a single
dot connected by the edge to the boundary. Call the component a \emph{boundary dot}. \item If it has more boundary edges, a tree with no dots and the fewest possible number of trivalent
vertices needed to connect the boundaries. Moreover, any two such trees are equal. Call the component a \emph{simple tree}. \end{itemize} We refer to this as \emph{tree reduction}.

This applies only to components of a graph which are a single color. Even if the blue part of a graph looks like a tree, if other colors overlap then we may not apply tree reduction in
general. \end{remark}

\begin{figure}
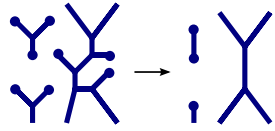

	$\ig{.8}{treeredex}$
	\caption{An example of tree reduction}
\end{figure}

In the following relations, the two colors are distant.

\begin{equation} \label{R2} \ig{.8}{R2ij} \end{equation}
\begin{equation} \label{distslidedot} \ig{1}{distslidedot} \end{equation}
\begin{equation} \label{distslide3} \ig{.8}{distslide3} \end{equation}
\begin{equation} \label{dotslidefar} \ig{.8}{dotslidefar} \end{equation}

In this relation, two colors are adjacent, and both distant to the third color.

\begin{equation} \label{distslide6} \ig{.8}{distslide6} \end{equation}

In this relation, all three colors are mutually distant.

\begin{equation} \label{distslide4} \ig{1}{distslide4} \end{equation}

\begin{remark} Relations (\ref{R2}) thru (\ref{distslide4}) indicate that any part of the graph colored $i$ and any part of the graph colored $j$ ``do not
interact'' for $i$ and $j$ distant. That is, one may visualize sliding the $j$-colored part past the $i$-colored part, and it will not change the morphism. We call
this the \emph{distant sliding property}. \end{remark}

In the following relations, the two colors are adjacent.

\begin{equation} \label{dot6} \ig{.8}{dot6} \end{equation}
\begin{equation} \label{ipidecomp} \ig{.8}{ipidecomp} \end{equation}
\begin{equation} \label{assoc2} \ig{.8}{assoc2} \end{equation}
\begin{equation} \label{dotslidenear} \ig{1}{dotslidenear} \end{equation}

In this final relation, the colors have the same adjacency as $\{1,2,3\}$.

\begin{equation} \label{assoc3} \ig{1}{assoc3} \end{equation}

This concludes the list of relations defining $\mHC_1$.

\begin{remark} We chose here to describe $\mHC_1$ in terms of planar graphs with relations, with the notion of isotopy built-in, rather than in terms of generators and relations. Note
however that using isotopy and (\ref{unit}) we get $\ig{.4}{cupdef}$. Therefore, all ``cups" and ``caps" can be expressed in terms of the generators. By adding new relations corresponding to
isotopy, one could give a presention of the category where the ``generators" above (and their isotopy twists) are really generators. This is how the category is presented in
\cite{EKh}.\end{remark}

We will occasionally use a shorthand to represent double dots. We identify a double dot colored $i$ with the polynomial $f_i\in R$, and to a linear combination of
disjoint unions of double dots in the same region of a graph, we associate the appropriate linear combination of products of $f_i$. For any polynomial $f\in R$, a
square box with a polynomial $f$ in a region will represent the corresponding linear combination of graphs with double dots.

\plabel{$f_i^2f_j$}{48}{12}
For instance, $\ig{1.5}{polyexample}$.

Relations (\ref{dotslidesame}), (\ref{dotslidenear}), and (\ref{dotslidefar}) are referred to as \emph{dot forcing rules}, because they describe at what price one can ``force" a double
dot to the other side of a line. The three relations imply that, given a line and an arbitrary collection of double dots on the left side of that line, one can express the morphism as
a sum of diagrams where all double dots are on the right side, or where the line is ``broken" (as illustrated next). Rephrasing this, for any polynomial $f$ there exist
polynomials $g$ and $h$ such that

\plabelthree{$f$}{288}{816}{$g$}{344}{816}{$h$}{392}{816} \begin{equation} \ig{1.2}{polyslidegen} \label{polyslidegen} \end{equation}

The polynomials appearing can in fact be found using the Demazure operator $\partial_i$, and in particular, $h=\partial_i(f)$. One particular implication is that
\plabeltwo{$f$}{288}{816}{$f$}{344}{816} \begin{equation} \ig{1.2}{polyslidesym} \end{equation} whenever $f$ is a polynomial invariant under $s_i$ (and blue represents $i$). As an
exercise, the reader can check that $f_i^2$ slides through a line colored $i$. These polynomial relations are easy to deduce, or one can refer to \cite{EKh} (see p.7, p. 16-17, and
relation 3.16).

We have an bimodule action of $R$ on morphisms by placing boxes (i.e. double dots) in the leftmost or rightmost regions of a graph. Now we can formulate the main result of \cite{EKh}.

\begin{thm} \label{mainthmekh} There is a functor from this diagrammatic category $\mHC_1$ to the earlier definition in terms of Bott-Samelson bimodules. This functor sends $\ii$ to
the bimodule $B_{\ii}$ and a planar graph to a map of bimodules, preserving the grading and the $R$-bimodule action on morphisms. This functor is an equivalence of categories.
\end{thm}

\begin{cor} \label{freemodules} The $R$-bimodules $\Hom_{\mHC_1}(B_{\ii},B_{\jj})$ are free as left (or right) $R$-modules. In other words, placing double dots to the left of a graph
is a torsion-free operation. \end{cor}

Now we have justified our abuse of notation. In this paper, we will never need to know explicitly what map of $R$-bimodules a planar graph corresponds to, so the interested reader can
see \cite{EKh} for details. In fact, we will not use Theorem \ref{mainthmekh} at all, preferring to work entirely with planar graphs. However, we do use Corollary \ref{freemodules}, a
fact which would be difficult to prove diagrammatically.

The proof of Theorem \ref{mainthmekh} can be quickly summarized: first, one explicitly constructs a functor from the diagrammatic category to the Bott-Samelson category. Then, using
the observations of the next section, one shows that the diagrammatic category is a potential categorification of $\mH$, and that the diagrammatic category, the Bott-Samelson
category, and the image of the former in the latter all induce the same adjoint pairing on $\mH$. Therefore the functor is fully faithful.

%
\subsection{Understanding Soergel diagrammatics}
\label{subsec-diagrammatics2}
%

Let us explain diagrammatically why the category $\mHC_1$ is a potential categorification of $\mH$, and induces the aforementioned adjoint pairing.

\begin{defn} \label{gradedkaroubi} Given a category $\mc{C}$ whose morphism spaces are $\Z$-modules, we may take its \emph{additive closure}, which formally adds direct sums of
objects, and yields an additive category. Given $\mc{C}$ whose morphism spaces are graded $\Z$-modules, we may take its \emph{grading closure}, which formally adds shifts of objects,
but restricts morphisms to be homogeneous of degree 0. Given $\mc{C}$ an additive category, one may take the \emph{idempotent completion} or \emph{Karoubi envelope}, which formally
adds direct \emph{summands}. Recall that the Karoubi envelope has as objects pairs $(B,e)$ where $B$ is an object in $\mc{C}$ and $e$ an idempotent endomorphism of $B$. This object
acts as though it were the ``image'' of this projection $e$, and behaves like a direct summand. When taking the Karoubi envelope of a graded category (or a category with graded
morphisms) one restricts to homogeneous degree 0 idempotents. We refer in this paper to the entire process which takes a category $\mc{C}$, whose morphism spaces are graded
$\Z$-modules, and returns the Karoubi envelope of its additive and grading closure as taking the \emph{graded Karoubi envelope}. All these transformations interact nicely with monoidal
structures. For more information on Karoubi envelopes see \cite{BNM}.

We let $\mHC_2$ be the graded additive closure of $\mHC_1$, and $\mHC$ be the graded Karoubi envelope of $\mHC_1$.\end{defn}

We wish to show that the isomorphisms (\ref{dc-ii}) through (\ref{dc-ipi}) hold in $\mHC_2$. Relation (\ref{R2}) immediately implies that $B_i \ot B_j \cong B_j \ot B_i$ for $i,j$
distant, with the isomorphism being given by the 4-valent vertex.

We have the following equality:
\begin{equation} \label{iidecomp} \ig{1}{iidecomp}. \end{equation}
To obtain this, use (\ref{unit}) to stretch two dots from the two lines into the middle, and then use (\ref{dotslidesame}) to connect them. The identity $\id_{ii}$ decomposes as a sum
of two orthogonal idempotents, each of which is the composition of a ``projection" and an ``inclusion" map of degree $\pm 1$, to and from $B_i$ (explicitly, $\id_{ii}=i_1p_1 + i_2p_2$
where $p_1i_1=\id_i$, $p_2i_2=\id_i$, $p_1i_2=0=p_2i_1$). This implies that $B_i \ot B_i \cong B_i\{1\} \oplus B_i\{-1\}$, and is a typical example of how direct sum decompositions
work in diagrammatic categories.

Similarly, the two color variants of relation (\ref{ipidecomp}) together express the direct sum decompositions in the Karoubi envelope \begin{eqnarray} B_i \otimes B_{i+1} \otimes B_i
= C_{ij} \oplus B_i\\ B_{i+1} \otimes B_i \otimes B_{i+1} = C_{ji} \oplus B_{i+1}. \end{eqnarray} Again, the identity $\id_{i(i+1)i}$ is decomposed into orthogonal idempotents. The
second idempotent factors through $B_i$, and the corresponding object in the Karoubi envelope will be isomorphic to $B_i$. The first idempotent, which we call a ``doubled 6-valent
vertex," corresponds to a new object $C_{ij}$ in the idempotent completion. It turns out that the doubled 6-valent vertex $C_{ij}$ for ``blue red blue" is isomorphic in the Karoubi
envelope to the doubled 6-valent vertex $C_{ji}$ for ``red blue red" (i.e. their images are isomorphic). We may abuse notation and call both these new objects $C_{ij}$; it is a summand
of both $i(i+1)i$ and $(i+1)i(i+1)$. The image of $C_{ij}$ in the Grothendieck group is $c_{ij}$.

We can also understand the induced pairing on $\mH$ using diagrammatic arguments. The theorems below are proven in \cite{EKh}, and we will not use them in this paper (except
motivationally), proving their analogs in the Temperley-Lieb case directly.

\begin{thm} \label{colorreduction} {\bf (Color Reduction)} Consider a morphism $\phi \colon \emptyset \to \ii$, and suppose that the index $i$ (blue) appears in $\ii$ zero times
(respectively: once). Then $\phi$ is in the $\Bbbk$-span of graphs which only contain blue in the form of double dots in the leftmost region of the graph (respectively: as well as a
single boundary dot). This result may be obtained simultaneously for multiple indices $i$. \end{thm}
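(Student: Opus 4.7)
The plan is to proceed by induction on a complexity measure of the blue (i.e., $i$-colored) subgraph of $\phi$, designed to strictly decrease under the local reductions used below. A natural choice is the lexicographic pair consisting of the number of 6-valent vertices involving blue and the number of blue edges. The simultaneous case for multiple indices will follow by iterating the single-index result, since each reduction for one color leaves alone any normal form already achieved for another color (all the relations used either act entirely inside one color or exchange blue for new blue of the same color at the same location).

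The key observation is that blue interacts with other colors in only three ways. With colors distant from blue, the distant sliding property (relations (\ref{R2})–(\ref{distslide4})) lets me push the two subgraphs past one another freely. With colors adjacent to blue, the relevant relations are (\ref{dot6}), (\ref{ipidecomp}), (\ref{assoc2}), (\ref{assoc3}), and (\ref{dotslidenear}). Within itself, the blue subgraph is controlled by tree reduction, the needle relation (\ref{needle}), and the unit and associativity relations (\ref{unit}), (\ref{assoc1}).

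The strategy is to locate an innermost blue face $F$ in $\phi$: a bounded region whose boundary consists of blue edges together with at most a portion of the top boundary, and whose interior contains no further blue edges. I evacuate the interior of $F$ by pushing distant-colored material out through the blue boundary via distant sliding, and by trading adjacent-colored edges that cross the boundary of $F$ for simpler local configurations using the adjacent-color relations above. Once $F$ is empty, relation (\ref{needle}) eliminates it and strictly reduces the blue complexity. I iterate until no blue face remains, at which point the blue subgraph is a disjoint union of trees. Tree reduction then collapses each closed blue tree to a double dot; if $i \in \ii$ (i.e.\ blue appears once in the top boundary), exactly one tree meets $\R \times \{1\}$ and collapses to a boundary dot. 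Finally, the dot-forcing rules (\ref{dotslidesame}), (\ref{distslidedot}), (\ref{dotslidefar}), and (\ref{dotslidenear}) push all blue double dots to the leftmost region, producing the claimed normal form.

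The main obstacle is the evacuation step for adjacent colors: sliding an adjacent-colored edge past a blue edge via (\ref{dot6}) or (\ref{ipidecomp}) can introduce new blue trivalent or 6-valent vertices, so one must verify that the chosen complexity measure strictly decreases at each local move. This requires a careful case analysis of the possible local configurations at the boundary of $F$, and is where the combinatorial heart of the EKh graphical calculus does the real work; once the measure is shown to drop under each relevant move, the induction closes straightforwardly.
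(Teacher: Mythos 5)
The paper does not actually prove Theorem \ref{colorreduction}: it states it as imported from \cite{EKh}, remarks that the proof ``comprises the bulk of \cite{EKh}'' and consists of ``convoluted pictorial arguments,'' and then proves only the much easier Temperley-Lieb analogue, Proposition \ref{prop-reduction}. So there is no in-paper argument to compare against; the honest question is whether your sketch could stand on its own.

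As written, it cannot, and the obstruction is exactly the step you flag and then set aside. The entire content of the theorem is the ``evacuation step for adjacent colors'': in $\mHC_1$, an adjacent-colored edge crosses a blue edge at a 6-valent vertex, and 6-valent vertices do not slide. Your induction is on the lexicographic pair (blue 6-valent vertices, blue edges), but the moves you would need to peel adjacent colors off an innermost blue face --- (\ref{assoc2}), (\ref{assoc3}), (\ref{ipidecomp}) --- do not obviously lower either coordinate: (\ref{assoc2}) and (\ref{assoc3}) shuffle 6-valent vertices around without reducing their number, and (\ref{ipidecomp}) can \emph{create} 6-valent vertices. Without a verified monotone, the induction does not close, and ``once the measure is shown to drop, the induction closes straightforwardly'' is a restatement of the theorem rather than a proof of it. The contrast with Proposition \ref{prop-reduction} is instructive: there the innermost-blue-cycle argument works precisely because, once 6-valent vertices are killed, no color adjacent to blue can cross a blue cycle at all, so the interior of the cycle genuinely has one fewer interacting color and the induction is clean. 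That structural feature is exactly what fails in $\mHC_1$, and supplying a workable replacement is the content of \cite{EKh}, not a deferrable detail. (The claim that the multi-color simultaneous case follows by iterating the single-color result also needs care: the dot-forcing move (\ref{dotslidenear}) used to push a blue double dot leftward can produce double dots and broken lines in an adjacent color, potentially disturbing a normal form already achieved for that color.)
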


\begin{cor} \label{cor-homspaces} The space $\Hom_{\mHC_1}(\emptyset,\emptyset)$ is precisely the graded ring $R$. In other words, it is freely generated
(over double dots) by the empty diagram. The space $\Hom_{\mHC_1}(\emptyset,\ii)$ for $\ii$ non-repeating is a free left (or right) $R$-module of rank 1, generated by the following
morphism of degree $d(\ii)$.

\igc{1}{phiii}
\end{cor}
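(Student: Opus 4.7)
\medskip

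\noindent\textbf{Plan.} Both statements should follow fairly directly from the Color Reduction Theorem \ref{colorreduction}, together with the bimodule functor (and Soergel's freeness result) to rule out unexpected relations. The strategy is to apply Color Reduction simultaneously to every index in $\ii$, determine the shape of the resulting spanning graphs, and then show that the left $R$-module structure so obtained is faithful.

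\medskip

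\noindent For $\HOM(\emptyset,\emptyset)$, I would fix a morphism $\phi\colon \emptyset \to \emptyset$ and apply Theorem \ref{colorreduction} simultaneously to all indices $i\in I$. Since each $i$ appears zero times in the (empty) boundary, $\phi$ lies in the $\Bbbk$-span of graphs in which color $i$ appears only as double dots in the leftmost region of the diagram. Doing this for every color at once, $\phi$ is a $\Bbbk$-linear combination of disjoint unions of double dots placed in the leftmost region, i.e.\ $\phi = f \cdot \id_{\emptyset}$ for some polynomial $f\in R$ (using the shorthand identifying double dots of color $i$ with $f_i$). This gives surjectivity of the natural map $R \to \HOM(\emptyset,\emptyset)$. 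For injectivity I would invoke the bimodule functor: the empty diagram corresponds to $\id_R$ in $\End_{R\text{-bimod}}(R)=R$, and placing the double dot $f_i$ on the left corresponds to multiplication by $f_i$. Distinct polynomials therefore map to distinct bimodule maps, so the map $R \to \HOM(\emptyset,\emptyset)$ is also injective, proving equality both as a graded ring and as an $R$-bimodule.

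\medskip

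\noindent For $\HOM(\emptyset,\ii)$ with $\ii=i_1\ldots i_d$ non-repeating, I would run exactly the same argument but using the ``appears once'' clause of Color Reduction applied simultaneously to each $i_k$. The conclusion is that any $\phi\colon \emptyset\to\ii$ lies in the $\Bbbk$-span of graphs in which each color $i_k$ appears only (i) as double dots in the leftmost region and (ii) as a single boundary dot. Because the top boundary of $\phi$ is the fixed sequence $\ii$, the boundary dot of color $i_k$ must terminate at the $k$-th point on $\R\times\{1\}$. Hence each such spanning graph is precisely a disjoint union of double dots in the leftmost region with the comb of $d$ boundary dots depicted as $\phi_{\ii}$. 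This exhibits the natural map
\[
R \;\longrightarrow\; \HOM(\emptyset,\ii), \qquad f \;\longmapsto\; f\cdot \phi_{\ii}
\]
as surjective, and shows the generator sits in degree $d$ (one dot of degree $+1$ per color), matching $(1,b_{\ii})_{\pi_{\mathrm{std}}}=t^d$.

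\medskip

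\noindent It remains to prove freeness, and this is really the only place the argument has to reach outside the diagrammatics. I would deduce it from the bimodule functor together with Soergel's result quoted in Section \ref{subsec-hecke}: $\HOM(B_\emptyset,B_{\ii})$ is free as a graded left $R$-module of rank $(1,b_{\ii})_{\pi_{\mathrm{std}}}=t^d$, in particular nonzero in degree $d$. Since $\phi_{\ii}$ is a degree-$d$ element of a rank-one free module whose image under the functor is easily checked to be a generator (its image is the canonical map sending $1\mapsto 1\otimes\cdots\otimes 1$), the multiplication map $f\mapsto f\cdot\phi_{\ii}$ is injective, completing the proof. The main (and really only) obstacle I foresee is this faithfulness step: without some external input, the diagrammatic category could a priori collapse relations among double dots, and we need Soergel's formula (or a direct computation via the bimodule functor) to guarantee it does not.
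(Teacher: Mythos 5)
Your proposal is correct and matches the approach the paper implicitly endorses: Color Reduction (Theorem \ref{colorreduction}) gives spanning by $\phi_{\ii}$ with double dots, and the functor to $R$-bimodules together with Soergel's freeness formula rules out relations among double dots (the paper itself only cites \cite{EKh} for this corollary rather than proving it, but earlier it explicitly notes that the bimodule functor is what establishes freeness of Hom spaces, which is precisely your key injectivity step). One small point of precision in the $\HOM(\emptyset,\ii)$ case: you should invoke Color Reduction simultaneously with the ``once'' clause for each $i_k\in\ii$ \emph{and} the ``zero times'' clause for each $j\notin\ii$, so that every color reduces to double dots on the left plus at most one boundary dot, before concluding the spanning graph is $\phi_{\ii}$ with double dots.
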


The proof of the theorem does not use any sophisticated technology, only convoluted pictorial arguments. It comprises the bulk of \cite{EKh}. The corollary implies that
$\epsilon_{\mHC_1}(b_{\ii})=\frac{t^d}{(1-t^2)^n}$ for $\ii$ non-repeating of length $d$, as stated in Section \ref{subsec-hecke}.

%
\subsection{Aside on Karoubi Envelopes and Quotients}
\label{subsec-karoubi}
%

Return to the setup of Definition \ref{gradedkaroubi}. If $\mc{C}$ is a full subcategory of (graded) $R$-bimodules for some ring $R$, then the transformations described above behave as
one would expect them to. In particular, the Karoubi envelope agrees with the full subcategory which includes all summands of the previous objects. The Grothendieck group of the
Karoubi envelope is in some sense ``under control," if one understands indecomposable $R$-bimodules already. On the other hand, the Karoubi envelope of an arbitrary additive category
may be enormous, and to control the size of its Grothendieck group one should understand and classify all idempotents in the category, a serious task. Also, arbitrary additive
categories need not have the Krull-Schmidt property, making their Grothendieck groups even more complicated.

The Temperley-Lieb algebra is obtained from the Hecke algebra by setting the elements $c_{ij}$ to zero, for $i=1, \ldots, n-1$. These elements lift in the Soergel
categorification to objects $C_{ij}$. The obvious way one might hope to categorify $\mTL$ would be to take the quotient of the category $\mHC$ by each object $C_{ij}$.

To mod out an additive \emph{monoidal} category $\mc{C}$ by an object $Z$, one must kill the \emph{monoidal} ideal of $\id_Z$ in $\Mor(\mc{C})$. That is, the
morphism space $\Hom(X,Y)$ in the quotient category is exactly $\Hom_{\mc{C}}(X,Y)$ modulo the submodule of morphisms factoring through $V \otimes Z \otimes W$ for any
$V,W$. If the category is drawn diagrammatically, one need only kill any diagram which has $\id_Z$ as a subdiagram.

We have not truly drawn $\mHC$ diagrammatically, only $\mHC_1$. The object we wish to kill is not an object in $\mHC_1$; the closest thing we have is the
corresponding idempotent, the doubled 6-valent vertex. However, this is not truly a problem, due to the following proposition, whose proof we leave to the reader.

\begin{prop} Let $\mc{C}_1$ be an additive category, $B$ an object in $\mc{C}_1$, and $e$ an idempotent in $\End(B)$. Let $\mc{D}_1$ be the quotient of $\mc{C}_1$ by
the morphism $e$. Let $\mc{C}$ and $\mc{D}$ be the respective Karoubi envelopes. Finally, let $\mc{D}^\prime$ be the quotient of $\mc{C}$ by the identity of the
object $(B,e)$. Then there is a natural equivalence of categories from $\mc{D}$ to $\mc{D}^\prime$.

The analogous statement holds when one considers graded Karoubi envelopes. \end{prop}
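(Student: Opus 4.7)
The plan is to construct a natural functor $F \colon \mc{D}^\prime \to \mc{D}$ and show it is an equivalence. The natural functor $\mc{C} \to \mc{D}$ sending $(X, p) \mapsto (X, \bar{p})$ (with $\bar{p}$ the image of $p$ in $\End_{\mc{D}_1}(X)$) sends the object $(B, e)$ to a zero object of $\mc{D}$, since $\bar{e} = 0$ in $\End_{\mc{D}_1}(B)$. Hence this functor kills $\id_{(B, e)}$ and factors through $\mc{D}^\prime = \mc{C}/\id_{(B, e)}$, producing $F$.

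I would then verify $F$ is fully faithful. For faithfulness, suppose $f \colon (X, p) \to (Y, q)$ in $\mc{C}$ (so $qfp = f$ in $\mc{C}_1$) vanishes in $\mc{D}$; then $f = g e h$ in $\mc{C}_1$ for some $g, h$, and $f = qfp = q(geh)p = (qge)(ehp)$, where $ehp \colon X \to (B, e)$ and $qge \colon (B, e) \to Y$ are valid morphisms in $\mc{C}$ (using $e^2 = e$). Hence $f$ already factors through $(B, e)$ in $\mc{C}$ and represents zero in $\mc{D}^\prime$. Fullness is immediate: any $\bar{f} \colon (X, \bar{p}) \to (Y, \bar{q})$ in $\mc{D}$ lifts to some $\tilde{f} \in \Hom_{\mc{C}_1}(X, Y)$, and $q\tilde{f}p$ is a valid morphism in $\mc{C}$ whose image in $\mc{D}$ is $\bar{q}\bar{f}\bar{p} = \bar{f}$.

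For essential surjectivity, given $(X, \bar{p}) \in \mc{D}$, lift $\bar{p}$ to $\tilde{p} \in \End_{\mc{C}_1}(X)$. The error $s = \tilde{p}^2 - \tilde{p}$ vanishes in $\mc{D}_1$ and hence factors through $e$: write $s = \phi\psi$ with $\psi \colon X \to (B, e)^n$ and $\phi \colon (B, e)^n \to X$ morphisms in $\mc{C}$. I would aim to build an idempotent
\[
Q = \begin{pmatrix} \tilde{p} & -\phi \\ \psi & d \end{pmatrix} \in \End_{\mc{C}}(X \oplus (B, e)^n)
\]
for a suitable endomorphism $d$ of $(B, e)^n$. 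Since $(B, e)^n \cong 0$ in $\mc{D}$, the inclusion of the $X$-summand becomes an isomorphism there, and one checks that $F(X \oplus (B, e)^n, Q) \cong (X, \bar{p})$ via the morphisms inherited from that summand.

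The main obstacle is constructing $d$. Expanding $Q^2 = Q$ forces the relations $\phi d = (1 - \tilde{p})\phi$, $d\psi = \psi(1 - \tilde{p})$, and $d^2 = d + \psi\phi$; these require $d$ to commute with $\psi\phi$ and to satisfy the quadratic $d^2 - d = \psi\phi$. In general one may need to enlarge $(B, e)^n$ further and take $d$ to be a matrix over $\End_{\mc{C}}((B, e))$, which is the classical idempotent-lifting trick for ideals generated by an idempotent. The Karoubi-completeness of $\mc{C}$ is precisely what supplies the room for this construction, and once $Q$ is produced the rest of the argument is formal.
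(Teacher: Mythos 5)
Your overall strategy is sound, and the construction of $F$ together with the full-faithfulness argument is essentially correct (one small point: $f$ vanishing in $\mc{D}_1$ means $f = \sum_i g_i e h_i$ for a \emph{finite sum}, not a single product $g e h$; the argument still works, with $f = qfp = \sum_i (q g_i e)(e h_i p)$ factoring through $(B,e)^n$ rather than a single copy of $(B,e)$).

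The real gap is in essential surjectivity. You correctly reduce the problem to producing an idempotent
\[
Q = \begin{pmatrix} \tilde{p} & -\phi \\ \psi & d \end{pmatrix} \in \End_{\mc{C}}\bigl(X \oplus (B,e)^n\bigr)
\]
and correctly derive the three constraints on $d$. But you then appeal to ``the classical idempotent-lifting trick for ideals generated by an idempotent'' without producing $d$ or citing a precise statement, and this is exactly where the content of the proposition lives. Lifting an idempotent across an arbitrary surjection of rings (or additive categories) is \emph{not} a formal fact: in general $\mc{D}'$ need not be idempotent complete merely because $\mc{C}$ is, and $K_0(R) \to K_0(R/ReR)$ need not be surjective for a general ring $R$ with idempotent $e$. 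Whatever makes the proposition work must exploit the specific structure of the ideal $\langle \id_{(B,e)}\rangle$ beyond the bare facts you use; the statement ``Karoubi-completeness of $\mc{C}$ supplies the room'' is not by itself an argument, since the matrix $Q$ is required to live over $\mc{C}_1$-objects stabilized only by copies of $(B,e)$, and it is not clear that the needed $d$ (even after further stabilization) exists. At minimum you need to either exhibit $d$ explicitly for some well-chosen factorization $s = \phi\psi$, or prove directly that $\mc{D}' = \mc{C}/\langle\id_{(B,e)}\rangle$ is idempotent complete; as written, the crucial step is asserted rather than proved. (The paper itself leaves the proof to the reader, so there is no canonical argument to compare against, but this is the step that would need to be filled in for the proof to stand.)
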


\begin{remark} Note that $\mc{D}^\prime$ has more objects than $\mc{D}$, but they are still equivalent. For instance, $(B,e)$ and
$(B,0)$ are distinct (isomorphic) objects in $\mc{D}^\prime$, but are the same object in $\mc{D}$. \end{remark}

So to categorify $\mTL$, one might wish to take the quotient of $\mHC_1$ by the doubled 6-valent vertex, and then take the Karoubi envelope. This is easy to do diagrammatically, which is
one advantage to the diagrammatic approach over the $R$-bimodule approach. The quotient of $\mHC_1$ will no longer be a category which embeds nicely as a full subcategory of bimodules.
One might worry that Krull-Schmidt fails, or that to understand its Karoubi envelope one must classify all idempotents therein. Thankfully, our calculation of HOM spaces will imply easily
that its graded additive closure is Krull-Schmidt and is \emph{already} idempotent closed, so it is equivalent to its own Karoubi envelope (see Section \ref{subsec-usingpairing}).

\section{The Quotient Category $\mTLC$}
\label{sec-tlc}

%
\subsection{A motivating calculation}
\label{subsec-motivate}
%

As discussed in the previous section, our desire is to take the quotient of $\mHC_1$ by the doubled 6-valent vertex, and then take the graded Karoubi envelope.

An important consequence of relations (\ref{dot6}) and (\ref{needle}) is that
\begin{equation} \label{dot6surrounded} \ig{.8}{dot6surrounded} \end{equation}
from which it follows, using (\ref{ipidecomp}), that
\begin{equation} \ig{.8}{tripleequalssingle} \label{tripleequalssingle} \end{equation}
so the (monoidal) ideal generated in $\mHC_1$ by a doubled 6-valent vertex is the same as the ideal generated by the 6-valent vertex.

\begin{claim} The following relations are all equivalent (the ideals they generate are equal).

\begin{equation} \ig{.8}{6equals0} \label{AAA} \end{equation}
\begin{equation} \ig{.8}{dot6etc} \label{reldotmod6}\end{equation}
\begin{equation} \ig{.8}{switch} \label{switch}\end{equation}
\begin{equation} \ig{.8}{threelines-tlc}  \label{tlc-threelines}\end{equation}
\begin{equation} \ig{.8}{double6equals0} \label{EEE} \end{equation}
\end{claim}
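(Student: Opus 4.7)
The plan is to prove a cycle of implications $(\ref{AAA}) \Rightarrow (\ref{reldotmod6}) \Rightarrow (\ref{switch}) \Rightarrow (\ref{tlc-threelines}) \Rightarrow (\ref{EEE}) \Rightarrow (\ref{AAA})$, using only the local relations of $\mc{HC}_1$ already recorded (notably (\ref{ipidecomp}), (\ref{dot6}), (\ref{needle}), and the already-derived (\ref{tripleequalssingle})). The simplest half of the cycle is the implication $(\ref{AAA}) \Rightarrow$ each of (\ref{reldotmod6}), (\ref{switch}), (\ref{tlc-threelines}), (\ref{EEE}): all four diagrams contain, or can be locally massaged into, a configuration containing a single 6-valent vertex as a subdiagram, so they are killed the moment the 6-valent vertex is set to zero.

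The key non-trivial step is $(\ref{EEE}) \Rightarrow (\ref{AAA})$. This is exactly where (\ref{tripleequalssingle}) is designed to be used: stacking an extra 6-valent vertex vertically onto the doubled 6-valent vertex produces a triple 6-valent vertex, which is zero whenever the doubled one is; but by (\ref{tripleequalssingle}) the triple equals a single 6-valent vertex, so (\ref{EEE}) forces (\ref{AAA}). This single move closes the cycle, and is really the whole content of the claim; the other arrows are bookkeeping.

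For the intermediate arrows, the plan is to translate between the forms of the relations by (i) attaching or removing trivalent vertices, boundary dots, or parallel strands using (\ref{assoc1}) and (\ref{unit}); (ii) applying the local idempotent decomposition (\ref{ipidecomp}) to convert between a 6-valent-vertex configuration and a trivalent/dot configuration on $iji$ or $jij$; and (iii) applying (\ref{dot6}) and (\ref{needle}) to pass between a ``dot meets 6-valent vertex'' picture and a picture where the two colors have separated (so (\ref{switch}) can arise as the statement that two lines of adjacent colors can be switched modulo the ideal, and (\ref{tlc-threelines}) as a direct consequence of (\ref{ipidecomp}) once (\ref{AAA}) is in force). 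Each of these steps is a local move; none requires the kind of global graphical induction that the color reduction theorem of \cite{EKh} needs.

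The main obstacle I anticipate is purely organizational: choosing an order on the four intermediate relations so that each step is a single-picture local move rather than a combination. If a clean cyclic ordering proves awkward, a safer fallback is a ``star'' argument in which one proves $(\ref{AAA}) \Leftrightarrow$ each of the other four directly, using (\ref{tripleequalssingle}) whenever passing from a statement that involves the doubled 6-valent vertex (or a diagram containing it) back to (\ref{AAA}). Either way, no serious graphical technology is required beyond the relations already available.
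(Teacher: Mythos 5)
Your plan is essentially the same as the paper's proof: the paper also proves the cycle $(\ref{AAA}) \Rightarrow (\ref{reldotmod6}) \Rightarrow (\ref{switch}) \Rightarrow (\ref{tlc-threelines}) \Rightarrow (\ref{EEE}) \Rightarrow (\ref{AAA})$, with the final arrow via (\ref{tripleequalssingle}), and the intermediate arrows by adding dots and invoking (\ref{dot6}) and (\ref{ipidecomp}), exactly as you describe. One small caution: your phrasing that (\ref{reldotmod6}), (\ref{switch}), (\ref{tlc-threelines}) ``are killed'' once the 6-valent vertex is set to zero is not right as stated — these are equalities between diagrams neither side of which contains a 6-valent vertex, and what (\ref{AAA}) buys you is that their \emph{difference} can be rewritten to contain one; your later paragraph makes clear you understand this, but the earlier sentence should be corrected.
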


\begin{proof}
	(\ref{AAA}) $\implies$ (\ref{reldotmod6}): Add a dot, and use relation (\ref{dot6}).
	
	(\ref{reldotmod6}) $\implies$ (\ref{switch}): Add a dot to the top, and use (\ref{unit}).
	
	(\ref{switch}) $\implies$ (\ref{reldotmod6}): Apply to the middle of the diagram.
	
	(\ref{switch}) $\implies$ (\ref{tlc-threelines}): Stretch dots from the blue strands towards the red strand using (\ref{unit}), and then apply (\ref{switch}) to the middle.
	
	(\ref{tlc-threelines}) $\implies$ (\ref{EEE}): Use relation (\ref{ipidecomp}).
	
	(\ref{EEE}) $\implies$ (\ref{AAA}): Use (\ref{tripleequalssingle}).
\end{proof}

Modulo 6-valent vertices, the relations (\ref{dot6}) and (\ref{ipidecomp}) become (\ref{reldotmod6}) and (\ref{tlc-threelines}) above. All other relations involving
6-valent vertices, namely (\ref{assoc2}), (\ref{assoc3}), and (\ref{distslide6}), are sent to zero modulo 6-valent vertices. Relation (\ref{switch}) implies both
(\ref{reldotmod6}) and (\ref{tlc-threelines}) without reference to any graphs using 6-valent vertices. So if we wish to rephrase our quotient in terms of graphs that
never have 6-valent vertices, the sole necessary relation imposed by the fact that 6-valent vertices were sent to zero is the relation (\ref{switch}).

Suppose we only allow ourselves univalent, trivalent, and 4-valent vertices, but no 6-valent vertices, in a graph $\Gamma$. Then the $i$-graph of $\Gamma$, which
consists of all edges colored $i$ and all vertices they touch, will be disjoint from the $i+1$- and $i-1$-graphs of $\Gamma$. The distant sliding property implies
that the $i$-graph and the $j$-graph of $\Gamma$ effectively do not interact, when $i$ and $j$ are distant. This will motivate the definition in the next section.

%
\subsection{Diagrammatic definition of $\mTLC$}
\label{subsec-deftlc}
%

\begin{defn} We let $\mTLC_1$ be the monoidal category, with hom spaces enriched over graded vector spaces, defined as follows. Objects will be sequences of colored points on the
line $\R$, which we will call $\ii$ or $U_{\ii}$. Consider the set whose elements are described as follows:

\begin{enumerate}
\item For each $i \in I$, consider a planar graph $\Gamma_i$ in the strip, which is drawn with edges colored $i$ (see Definition \ref{defn-planargraph}).
\item The only vertices in $\Gamma_i$ are univalent vertices (dots) and trivalent vertices.
\item The graphs $\Gamma_i$ and $\Gamma_{i+1}$ are disjoint.  All graphs $\Gamma_i$ are pairwise disjoint on the boundary.
\item We consider isotopy classes of this data, so that one may apply isotopy to each $\Gamma_i$ individually so long as it stays appropriately disjoint.
\end{enumerate}

This set has a grading, where the degree of a graph is $+1$ for each dot and $-1$ for each trivalent vertex, and the degree of an element of this set is the sum of the degrees for each
graph $\Gamma_i$. Just as in Definition \ref{defn-morphhc}, each element of the set has a top and bottom boundary which is an object in $\mTLC$, and will be thought of as a map from
the bottom boundary to the top. We let $\Hom_{\mTLC_1}(U_{\ii},U_{\jj})$ be the graded vector space with basis given by elements of the set above with bottom boundary $\jj$ and top
boundary $\ii$, modulo the relations (\ref{assoc1}) through (\ref{dotslidesame}), (\ref{dotslidenear}), and the new relation (\ref{switch}). As a reminder, the new relation is given
here again.

\begin{center}$\ig{1}{switch}$\end{center}

As before, composition of morphisms is given by vertical concatenation, the monoidal structure is given by horizontal concatenation, and relations are to be interpreted monoidally.
This concludes the definition. \end{defn}

Phrasing the definition in this fashion eliminates the need to add distant sliding rules, for these are now built into the notion of isotopy. Note that as we have stated it here,
$\Gamma_i$ and $\Gamma_j$ may have edges which are embedded in a tangent fashion, or even entirely overlap. However, such embeddings are isotopic to graph embeddings with only
transverse edge intersections, which arise as 4-valent vertices in our earlier viewpoint.

\begin{prop} The category $\mTLC_1$ is isomorphic to $\mHC_1$ modulo the 6-valent vertex. \end{prop}
	
\begin{proof} Due to the observations of Section \ref{subsec-motivate}, this is obvious. \end{proof}

Hom spaces in $\mTLC_1$ are in fact enriched over graded $R$-bimodules, by placing double dots as before. However, they will no longer be free as left or right $R$-modules, as we shall see.

\begin{remark} Note that tree reduction (see Remark \ref{treereduction}) can now be applied to any tree of a single color in $\mTLC$, regardless of what other colors are present,
since the only colors which can intersect the tree are distant colors which do not actually interfere. \end{remark}

We denote by $\mTLC$ the graded Karoubi envelope of $\mTLC_1$, and $\mTLC_2$ the graded additive closure of $\mTLC_1$. However, we will show that $\mTLC_2$ is
already idempotent-closed, so that $\mTLC_2$ and $\mTLC$ are the same.

It is obvious that 
\begin{eqnarray} 
U_i \otimes U_{i+1} \otimes U_i \cong U_i \label{tlc-ipi}\\
U_{i+1} \otimes U_i \otimes U_{i+1} \cong U_{i+1}\label{tlc-pip}
\end{eqnarray}
in $\mTLC_1$, from the relation (\ref{tlc-threelines}) and the simple calculation (using dot forcing rules) that 
\begin{equation} \ig{.8}{coxeterip}\end{equation}

For the same reasons as in Section \ref{subsec-diagrammatics2} we still have $U_i \ot U_j \cong U_j \ot U_i$ for $i,j$ distant, and $U_i \ot U_i \cong U_i\{1\} \oplus U_i\{-1\}$ in
$\mTLC_2$. Therefore $\mTLC$ is a potential categorification of $\mTL$, and induces an adjoint pairing and a trace map $\epsilon_{\mTLC}$ on $\mTL$. At this point, we have not shown that
the category $\mTLC_1$ is nonzero, so this pairing could be $0$.

%
\subsection{Using the adjoint pairing}
\label{subsec-usingpairing}
%

\begin{prop} \label{prop-traceisenough} Let $\mC_1$ be an enriched category which is a potential categorification of $\mTL$, whose objects are $U_{\ii}$ for sequences $\ii$. Let
$\mC_2$ be its additive graded closure, and $\mC$ be its graded Karoubi envelope. Suppose that the induced trace map $\epsilon_{\mC_1}$ on $\mTL$ is equal to $\epsilon_{\rm{cat}}$. Then
the set of $U_{\ii}\{n\}$ for $n \in \Z$ and $\ii$ 321-avoiding forms an exhaustive irredundant list of indecomposables in $\mC_2$. In addition, $\mC_2$ is Krull-Schmidt
and idempotent-closed (so $\mC_2$ and $\mC$ are equivalent), and $\mC$ categorifies $\mTL$. \end{prop}

This proposition is an excellent illustration of the utility of the induced adjoint pairing. We prove it in a series of lemmas, which all assume the hypotheses above.

\begin{lemma} \label{lemma-something} The object $U_{\ii}$ in $\mC_1$ has no non-trivial (homogeneous) idempotents when $\ii$ is 321-avoiding. Moreover, if both $\ii$ and $\jj$ are
321-avoiding, then $U_{\ii} \cong U_{\jj}\{m\}$ in $\mC_2$ if and only if $m=0$ and $u_{\ii}=u_{\jj}$ in $\mTL$. \end{lemma}

\begin{proof}
  Two 321-avoiding monomials in $\mTL$ are equal only if they are related by the relation (\ref{eqn-uiuj}). Since this lifts to an isomorphism $U_i \otimes U_j \cong
  U_j \otimes U_i$ in $\mC_2$, we have $u_{\ii}=u_{\jj} \implies U_{\ii} \cong U_{\jj}$.

  If an object has a 1-dimensional space of degree 0 endomorphisms, then it must be spanned by the identity map, and there can be no non-trivial idempotents. If an
  object has endomorphisms only in non-negative degrees, then it can not be isomorphic to any nonzero degree shift of itself. If two objects $X$ and $Y$ are such that
  both $\Hom(X,Y)$ and $\Hom(Y,X)$ are concentrated in strictly positive degrees, then no grading shift of $X$ is isomorphic to $Y$, since there can not be a degree
  zero map in both directions.

  Therefore, we need only show that (for 321-avoiding monomials) $U_{\ii}$ has endomorphisms concentrated in non-negative degree, with a 1-dimensional degree 0 part, and that when
  $u_{\ii} \ne u_{\jj}$, $\Hom(U_{\ii},U_{\jj})$ is in strictly positive degrees. This question is entirely determined by the pairing on $\mTL$, since it only asks about the graded
  dimension of Hom spaces.

  When $\ii$ is empty, we already know that $(1,1)=\frac{t^n}{(1-t^2)}\qtwo^n - \frac{t^2}{(1-t^2)}$, which has degree 0 coefficient $1$, and is concentrated in
  non-negative degrees.

  We know how to calculate $(x,y)$ in $\mTL$ when $x$ and $y$ are monomials, and either $x$ or $y$ is not $1$ (see Section \ref{subsec-temperley}). We draw $x$ as a
  crossingless matching, draw $y$ upside-down and place it below $x$, and close off the diagram: if there are $m$ circles in the diagram, then $(x,y)=\frac{t^n\qtwo^{m-1}}{1-t^2}$. In
  particular, if $m=n+1$ then the Hom space will be concentrated in non-negative degrees, with 1-dimensional degree 0 part. If $m<n+1$ then the Hom space will be concentrated in
  strictly positive degrees.

  We leave it as an exercise to show that, if $x$ is a crossingless matching (i.e. a 321-avoiding monomial) then the closed diagram for $(x,x)$ has exactly $n+1$
  circles. The following example makes the statement fairly clear, where $\tilde{x}$ is $x$ upside-down:

\labellist
\small\hair 2pt
\pinlabel $x$ at 280 468
\pinlabel $\tilde{x}$ at 282 434
\endlabellist

\igc{1}{tlclosureexample}

  In this example $x$ has all 3 kinds of arcs which appear in a crossingless matching: bottom to top, bottom to bottom, and top to top. Each of these corresponds to a single
  circle in the diagram closure.

  Similarly, there are fewer than $n+1$ circles in the diagram for $(x,y)$ whenever the crossingless matchings $x,y$ are non-equal. Consider the diagram above but with the region $x$ removed. One can see that no circles are yet completed, and each boundary point of $x$'s region is matched to another by an arc. The
  number of circles is maximized when you pair these boundary points to each other, and this clearly gives the matching $x$. For any other matching $y$, two arcs will become joined
  into one, and fewer than $n+1$ circles will be created. \end{proof}

\begin{lemma} $\mC_2$ is idempotent-closed, and its indecomposables can all be expressed as grading shifts of $U_{\ii}$ for $\ii$ 321-avoiding. It has the Krull-Schmidt property.
\end{lemma}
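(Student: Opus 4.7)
The plan is to combine a reduction argument with the endomorphism-ring data already supplied by the preceding Proposition. The key first step is to show that every object of $\mathcal{TLC}_2$ is isomorphic to a finite direct sum of grading shifts of $U_{\jj}$ for $\jj$ 321-avoiding. Three categorified Temperley--Lieb isomorphisms suffice for this induction: $U_i \otimes U_i \cong U_i\{1\} \oplus U_i\{-1\}$ (from the idempotent decomposition (\ref{iidecomp})), $U_i \otimes U_j \cong U_j \otimes U_i$ for distant $i,j$, and the isomorphisms (\ref{tlc-ipi}), (\ref{tlc-pip}). Each of these is realized already in $\mathcal{TLC}_2$ rather than only in the Karoubi envelope, since in each case the relevant idempotents factor through objects already present in $\mathcal{TLC}_1$ (with a shift). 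Mimicking the Temperley--Lieb reduction of any monomial to a power of $\qtwo$ times a 321-avoiding monomial, induction on the length of $\ii$ then gives the claimed decomposition of $U_{\ii}$, and hence of any object of $\mathcal{TLC}_2$.

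Given this, the preceding Proposition and its proof furnish exactly what is needed on endomorphism rings: for 321-avoiding $\ii$, the ring $\End(U_{\ii})$ is concentrated in non-negative degrees with one-dimensional degree-zero part spanned by the identity, hence is a graded local ring whose unique maximal ideal is the strictly positive-degree part. In particular each such $U_{\ii}$ is indecomposable. Moreover, whenever $u_{\ii} \ne u_{\jj}$ for 321-avoiding $\ii, \jj$, the graded space $\HOM(U_{\ii}, U_{\jj})$ lies in strictly positive degrees, so no grading shift of $U_{\ii}$ is isomorphic to $U_{\jj}$, and the collection of shifts $U_{\jj}\{m\}$ as $\jj$ ranges over 321-avoiding monomials and $m$ over $\Z$ consists of pairwise non-isomorphic indecomposables.

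These two facts place $\mathcal{TLC}_2$ in the standard Krull--Schmidt setting: every object decomposes as a finite direct sum of objects with local endomorphism rings. Such categories are automatically Krull--Schmidt, and idempotent-completeness is then immediate, because any direct summand of a finite direct sum of indecomposables is itself isomorphic to a direct sum of a subset of those indecomposables, and hence is already an object of $\mathcal{TLC}_2$. The indecomposables must therefore be precisely the grading shifts of $U_{\jj}$ for 321-avoiding $\jj$, exactly as stated.

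The only real obstacle is the first step: verifying that the reduction to 321-avoiding form takes place genuinely in the additive grading closure $\mathcal{TLC}_2$ and not merely in its Karoubi envelope. This amounts to exhibiting, for each of the three TL isomorphisms above, explicit inclusion/projection morphisms in the diagrammatic calculus whose composites give the required summand projections; for (\ref{iidecomp}) this is precisely the calculation recalled in the discussion of (\ref{dc-ii}), and the adjacent and distant cases are similar. Once this bookkeeping is done, everything else is formal.
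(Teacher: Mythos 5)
Your proposal is correct and takes essentially the same approach as the paper: reduce every $U_{\ii}$ to a direct sum of shifts of 321-avoiding $U_{\jj}$ via the categorified Temperley--Lieb relations (noting these decompositions already live in $\mathcal{TLC}_2$), then invoke the local endomorphism rings established in the preceding Proposition to obtain Krull--Schmidt and idempotent-completeness. Your write-up is somewhat more explicit than the paper's brief sketch about why the reduction happens in $\mathcal{TLC}_2$ and why idempotent-completeness follows, but the argument is the same.
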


\begin{proof} Since the Temperley-Lieb relations allow one to reduce a general word to a 321-avoiding word, one can show that every $U_{\ii}$ is isomorphic to a direct sum of shifts of
$U_{\jj}$ for 321-avoiding $\jj$, using isomorphisms and direct sum decompositions instead of the analogous Temperley-Lieb relations. Clearly these shifted $U_{\jj}$ are all
indecomposable, since they have no non-trivial idempotents; these are then all the indecomposables. Since every indecomposable in $\mC_2$ has a graded-local endomorphism ring (with
maximal ideal given by positively graded morphisms), $\mC_2$ is idempotent-closed and Krull-Schmidt (see \cite{Ring}, Section 2.2). \end{proof}

The Krull-Schmidt property implies that isomorphism classes of indecomposables form a basis for the Grothendieck group.

\begin{proof}[Proof of Proposition \ref{prop-traceisenough}] There is a $\Ztt$-linear map of rings $\mTL \to [\mC_2]$, which is evidently bijective because it sends the
321-avoiding basis to the 321-avoiding basis. Since $\mC=\mC_2$, we are done. \end{proof}

This proposition shows that Lemma \ref{mainlemma} implies Theorem \ref{maintheorem}.

\begin{remark} In analogy to the paper \cite{EKh}, the bulk of the proof of Theorem \ref{maintheorem} lies in proving that hom spaces induce a particular adjoint pairing. Beyond that
we have mostly stated the obvious. Let us note that what is obvious for $\mTL$ and $\mTLC$ is \emph{not} obvious at all when dealing with $\mH$ and $\mHC$. In particular, if we are
given a category $\mC_1$ which is a potential categorification of $\mH$ as in Proposition \ref{prop-traceisenough}, we can not conclude that $\mC$ categorifies $\mH$. We summarize the
differences here.

It is clear (for both Hecke and Temperley-Lieb) that the map $\mH \to [\mHC_2]$ is well-defined and surjective. The two main subtleties are 1) the difference between $\mHC_2$ and
$\mHC$, and 2) the injectivity of the map.

In general, one likes to examine the additive Grothendieck group only of idempotent-closed categories with the Krull-Schmidt property, because this guarantees that indecomposables form
a basis for the Grothendieck group. Thus it is convenient that $\mTLC_2$ is already idempotent-closed. Thankfully, we have a result of Soergel \cite{Soe4} that proves that $[\mHC_2]
\cong [\mHC]$, as was discussed in Remark \ref{rmk-grothofsoergel}.

To show injectivity of the map in the $\mTL$ case, we can identify a basis of $\mTL$ which is sent to a complete set of indecomposables, and then we can evaluate the trace map to show
that these indecomposables are pairwise non-isomorphic. For $\mHC$, we do not currently know what the indecomposables (i.e. idempotents) are, nor do we know their preimage in $\mH$. If
we knew a class of indecomposables which decategorified to the Kazhdan-Lusztig basis, then we could use a similar argument to the above to show that they and their shifts
form an exhaustive irredundant list of indecomposables in $\mHC$, and therefore that the map $\mH \to [\mHC]$ is injective. Soergel discusses this in the last chapter of \cite{Soe4}. This is actually a deep question, shown by Soergel
(\cite{Soe5}, see also \cite{Soe1,Soe4}) to be equivalent to proving a version of the Kazhdan-Lusztig conjectures. In any case, the result depends on the base field $\Bbbk$, and no
simple proof has been found. In particular, to prove that the graded Karoubi closure of the diagrammatic category $\mHC_1$ categorifies the Hecke algebra (for certain $\Bbbk$) we must
pass to the world of bimodules where Soergel's powerful geometric techniques will work. In particular, there is currently no proof of injectivity if one defines the category $\mHC$
diagrammatically over $\Bbbk=\Z$.

It should be emphasized that the story of $\mTL$ is a particularly easy one (as is its Kazhdan-Lusztig theory). No high-powered technical machinery is needed, and the proofs of
idempotent closure and injectivity are self-contained and diagrammatic. In fact, the arguments in this paper \emph{do} work entirely over $\Z[\frac{1}{2}]$, as can be checked. Dividing
by two must be allowed in order to split the identity of $U_{ii}$ into idempotents, as in (\ref{iidecomp}); however, it is likely that the arguments would work over $\Z$ as well.
Working over $\Z$ is discussed more extensively in \cite{EKr}.
\end{remark}

\begin{remark} A category $\mc{O}$ analog of the fact that 321-avoiding monomials lift to indecomposable Soergel bimodules, which remain
indecomposable upon passage to the Temperley-Lieb quotient, can be found in Lemma 5.2 of \cite{Str}. \end{remark}

%
\subsection{Reductions}
\label{subsec-prelimhoms}
%

When we say that a graph or a morphism ``reduces" to a set of other graphs, we mean that the morphism is in the $\Bbbk$-span of those graphs. We refer to a one-color graph, each of whose
(connected) components is either a simple tree with respect to its boundary or a double dot, as a \emph{simple forest with double dots}. If there are no double dots, it is a \emph{simple
forest without double dots}. Tree reduction implies that any graph $\Gamma_i$ without cycles reduces to a simple forest with double dots. Note also that circles in a graph are equal to
needles with a dot attached, and can be treated just like any other cycle.

If there were only one color, we could iterate the following rule (which is an implication of the dot forcing rules and (\ref{needle})) to break cycles:

\plabeltwo{$f$}{422}{366}{$\partial_{i}f$}{492}{366}
\begin{equation} \ig{1.2}{polyhole} \label{polyhole}\end{equation}

We do something similar for the general case.

\begin{prop} \label{prop-reduction} In $\mTLC_1$ any morphism reduces to one where, for each $i$, the $i$-graph is a simple forest with double dots. Moreover, we may assume all double
dots are in the lefthand region. \end{prop}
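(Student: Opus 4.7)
The plan is to prove this by induction on the total cycle count $\sum_i c(\Gamma_i)$ of the morphism, where $c(\Gamma_i)$ is the first Betti number of the color-$i$ subgraph. If every $\Gamma_i$ is acyclic, then each is a disjoint union of trees, which by Remark \ref{treereduction} (tree reduction, applied within each color and using the distant sliding property to ignore interference from non-adjacent colors) reduces to a simple forest with double dots. So the task is to strictly decrease the total cycle count whenever it is positive.

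I would select an \emph{innermost face} $F$ of the planar graph $\bigcup_i \Gamma_i \subset \R \times [0,1]$, i.e., a bounded face whose closure is a disk and whose interior meets no other bounded face. Its boundary is a single cycle in some monochromatic $\Gamma_i$, and its interior contains only material from the other colors $\Gamma_j$ for $j \ne i$. For each $j$ distant from $i$, the distant sliding property (relations (\ref{R2}) through (\ref{dotslidefar})) lets us isotope the $j$-colored material out of $F$. For $j = i \pm 1$, the $j$-graph inside $F$ is trapped: it cannot cross $\Gamma_i$ (the two graphs are disjoint by definition of $\mc{TLC}_1$), and $F$ is an interior disk so has no access to $\R \times \{0,1\}$. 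Each such trapped component has strictly smaller complexity, so by a nested induction it reduces to a closed simple forest with double dots, which can only be a disjoint union of $j$-colored double dots.

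Now I push each such $j$-double dot ($j = i \pm 1$) across the $i$-cycle via the dot forcing rule (\ref{dotslidenear}). This produces two terms: one where the double dot has crossed out of $F$, and one where the $i$-cycle has been broken by a pair of boundary dots, reducing $c(\Gamma_i)$ by one and closing the induction on that branch. In the crossing branch we iterate. Once $F$ contains only $i$-colored content, it reduces to a polynomial $f \in R$ in the generators $f_i$ (using tree reduction and (\ref{dotslidesame})), and the needle relation (\ref{needle}) together with (\ref{dotslidesame}) implements the polyhole identity sketched before the proposition: the $i$-needle with $f$ inside equals the polynomial $\partial_i(f)$ and no cycle. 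Either way $c(\Gamma_i)$ strictly drops, completing the main induction and establishing the first claim.

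For the final assertion, once every $\Gamma_i$ is a simple forest with double dots, I would iterate the dot forcing rules (\ref{dotslidesame}), (\ref{dotslidefar}), (\ref{dotslidenear}) together with distant sliding to push every double dot leftward; any branch that breaks a line replaces one simple line by two boundary-dotted fragments, still a simple forest of strictly smaller tree-complexity, so the process terminates. The main obstacle is the innermost-face step with adjacent-color content trapped inside an $i$-cycle: here one cannot simply slide, and must first recursively reduce the trapped interior before the dot forcing rule applies. I would address this by a lexicographic induction on (total cycle count, total vertex count), so that every rule application, including the breaking branch of (\ref{dotslidenear}) and the recursive reduction of trapped interior content, strictly decreases this pair.
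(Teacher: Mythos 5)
Your proposal takes essentially the same route as the paper's proof: both induct on the total cycle count, select an innermost monochromatic cycle, recursively reduce the interior (which has strictly fewer cycles) to double dots, slide those double dots across the boundary using the dot forcing rules (with the broken-line terms falling to the induction), and finally kill the empty needle via relation (\ref{needle}). One small caveat worth noting: an ``innermost face of $\bigcup_i \Gamma_i$'' need not have a monochromatic boundary, since distant colors are permitted to cross, and by definition a face has empty interior, so the phrasing in your second paragraph is internally inconsistent; the paper instead picks a monochromatic cycle delineating a single region, explicitly observes that any same-colored spokes inside must be trees that can be detached by tree reduction before proceeding, and only then analyzes the interior --- an easy repair, but one your wording elides.
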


\begin{figure}
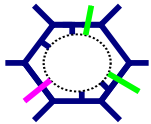

	$\ig{.9}{cycleremoval}$
	\caption{An arbitrary innermost blue cycle. The dotted line encapsulates the subgraph on the interior, which may contain colors adjacent to blue.}
\end{figure}

\begin{proof} We use induction on the total number of cycles (of any color) in the graph. Suppose there is a blue colored cycle: choose one so that it delineates a single
region (i.e. there are no other cycles inside). There may be blue ``spokes" going from this cycle into the interior, but no two spokes can meet, lest they create another region. By tree
reduction on the spokes, we can assume that any blue appearing inside the cycle is in a different blue component than the cycle. Other colors may cross over the cycle, into the interior.
If we view the interior of the cycle as a graph of its own, it has fewer total cycles so we may use induction. Since the boundary of the interior contains no color blue or colors adjacent
to blue, they may be assumed to appear in the interior only in the form of double dots next to the cycle. Using dot forcing rules, we reduce to two graphs: one with the cycle broken, and
one with all these double dots on the exterior of the cycle. The former reduces by induction. For the latter, only distant colors enter the cycle, so they can be slid out of the way to
leave an empty blue cycle, which is $0$ by the rule above.

We need only do the base case, where the graph has no cycles. The dot forcing rules imply that double dots may be moved to any region of the (multicolored) graph, at the cost of breaking
a few lines. Breaking lines will never increase the number of cycles. Therefore, if we have a graph without cycles, tree reduction implies that we actually have a simple forest with
double dots, and dot forcing allows us to move these double dots to the left. The breaking of lines may require more tree reduction, yielding more double dots, but this process is finite.
\end{proof}

\begin{remark} This proposition and its proof will apply to graphs in any connected simply-connected region in the plane. \end{remark}

\begin{cor} For any $\ii$ non-repeating, $\Hom_{\mTLC_1}(\emptyset,\ii)$ is generated (as a left or right $R$-module) by a single element $\phi_{\ii}$ of degree $d(\ii)$, pictured below.
	\igc{.8}{phiii}
\end{cor}

\begin{proof} A simple forest with double dots and at most one boundary edge is no more than a boundary dot with double dots. Thus any morphism reduces to a boundary dot for each color,
accompanied by double dots. \end{proof}

To show Lemma \ref{mainlemma} we need only investigate $\Hom(\emptyset,\ii)$ for $\ii$ increasing, since we have already shown that the values of $\epsilon(u_{\ii})$ are determined by
their values for $\ii$ increasing. This space will be an $R$-bimodule where the left and right action are the same (since the lefthand and righthand regions are the same in any picture
with no bottom boundary), so we view it as an $R$-module, and we have just shown that it is cyclic. Let $I_{\ii}$ be the ideal which is the kernel of the map $R \to \HOM(\emptyset,\ii)$
sending $1 \mapsto \phi_{\ii}$; we call it the \emph{TL ideal} of $\ii$. To prove the Lemma \ref{mainlemma} is to find $I_{\ii}$ and show that the graded dimension of $R/I_{\ii}\{d\}$ is $\epsilon_{\rm{cat}}(u_{\ii})$.

\begin{remark} \label{puncturedremark} Since the space $\Hom_{\mc{HC}_1}(\emptyset,\ii)$ is a \emph{free} $R$-module, all polynomials in $I_{\ii}$ must
have arisen from reducing to some morphism which contained the relation (\ref{switch}) to a ``nice form," i.e. $\phi_{\ii}$ plus double dots. In other words, letting $\alpha_i$ be the
morphism pictured below, we want to plug $\alpha_i$ into a bigger graph, reduce it to a nice form, and see what we get.

\plabel{$\alpha_i$}{692}{248}
\igc{1}{picalpha}

Remember that $\alpha_i$ is actually just a 6-valent vertex with two dots attached (one red and one blue). This bigger graph, into which $\alpha_i$ is plugged, will actually be a graph on
the \emph{punctured plane} or \emph{punctured disk} with specified boundary conditions on both the outer and inner boundary. The difficult graphical proofs of this paper just consist in
analyzing such graphs. This is done by splitting the punctured plane into simply-connected regions, and using the above proposition.
\end{remark}

%
\subsection{Generators of the TL Ideal}
\label{subsec-TLidealgens}
%

The sequence $\ii$ is assumed to be non-repeating.

\begin{prop}\label{prop-containment} 
  The TL ideal of $\emptyset$ contains $y_{i,j}\define
  f_if_j(f_i+2f_{i+1}+2f_{i+2}+\ldots+2f_{j-1}+f_j)$ over all $1\le i < j \le n$.

  The TL ideal of $\ii$ contains $z_{i,j,\ii} \define \frac{y_{i,j}}{g_ig_j}$
  where $g_i=f_i$ if $i \in \ii$, $g_i=1$ otherwise.
\end{prop}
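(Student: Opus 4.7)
The plan is to follow the strategy outlined in Remark \ref{puncturedremark}: for each generator of $I_\ii$ we wish to produce, construct a specific morphism in $\Hom_{\mc{TLC}_1}(\emptyset,\ii)$ whose original lift in $\mc{HC}_1$ contains the (killed) $6$-valent vertex, then reduce it via Proposition \ref{prop-reduction} to the canonical form $p\cdot \phi_{\ii}$. Whatever polynomial $p\in R$ appears in the leftmost region must then lie in the TL ideal $I_\ii$. In $\mc{TLC}_1$ the relevant input is not the $6$-valent vertex itself but the ``switch'' relation (\ref{switch}); differences of the two sides of any application of (\ref{switch}) vanish, and our job is to compute these differences as polynomial multiples of $\phi_\ii$.

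For part (a) with $j=i+1$, the plan is to build an endomorphism of $\emptyset$ out of two adjacent-colored strands (colors $i$ and $i+1$) capped by dots at top and bottom, with an instance of (\ref{switch}) inserted in the middle. Reducing via dot-forcing rules (\ref{dotslidesame}) and (\ref{dotslidenear}) pushes all double dots to the left region, and applying (\ref{switch}) then comparing the two sides produces the difference $f_if_{i+1}(f_i+f_{i+1})\cdot\id_\emptyset = y_{i,i+1}\cdot \id_\emptyset$. Factorizing $y_{i,i+1}=(x_i-x_{i+1})(x_{i+1}-x_{i+2})(x_i-x_{i+2})$ confirms this as the expected $A_2$ relation.

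For part (a) with $j>i+1$, the idea is to induct on $j-i$ (or construct a single ``ladder'' graph through colors $i,i+1,\dots,j$ simultaneously). The key input is the computation $\partial_{k+1}(f_k)=-1$ and $s_{k+1}(f_k)=f_k+f_{k+1}$, so that sliding a double dot $f_k$ across a line of color $k+1$ converts it to $f_k+f_{k+1}$ on the other side modulo a broken-line term. Iterating this across the chain from $i$ to $j$ using (\ref{polyslidegen}) assembles the telescoping sum $f_i+2f_{i+1}+\cdots+2f_{j-1}+f_j=x_i+x_{i+1}-x_j-x_{j+1}$, with each \emph{interior} color contributing a factor of $2$ (as its double dot is produced once by sliding up from color $i$ and once by sliding down from color $j$) while the endpoint colors contribute a single $f_i$ and $f_j$. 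The broken-line corrections produced en route either vanish outright by (\ref{needle}) or collapse via Proposition \ref{prop-reduction}, leaving $y_{i,j}\cdot \id_\emptyset$.

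For part (b), the observation is that $\phi_\ii$ carries a boundary dot of color $k$ for each $k\in \ii$. Such a boundary dot absorbs one factor of $f_k$: using (\ref{unit}) one attaches a double dot $f_k$ adjacent to the boundary dot and then cancels via (\ref{dotslidesame}), reducing the polynomial load by $f_k$. Reapplying the argument of part (a) with $\phi_\ii$ in place of the empty diagram, each appearance of $f_i$ (resp.\ $f_j$) in $y_{i,j}$ may be absorbed into the boundary dot of color $i$ (resp.\ $j$) precisely when $i\in\ii$ (resp.\ $j\in\ii$), yielding $z_{i,j,\ii}\cdot\phi_\ii=0$. The main obstacle is the chaining step in part (a): the bookkeeping of intermediate double dots, the verification that broken-line terms really do reduce away under Proposition \ref{prop-reduction}, and the emergence of the precise coefficient pattern $1,2,\dots,2,1$ all require careful graphical tracking rather than a slick algebraic shortcut.
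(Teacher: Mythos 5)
Your base case for $y_{i,i+1}$ is correct and matches the paper: capping off $\alpha_i$ (or equivalently a $6$-valent vertex) with dots yields $f_if_{i+1}(f_i+f_{i+1})$, which is the image of (\ref{switch}) in the free $R$-module $\HOM_{\mc{HC}_1}(\emptyset,\emptyset)$, and hence lies in $I_\emptyset$. The framing via Remark~\ref{puncturedremark} is also the right one.

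For $j>i+1$, however, the ``ladder''/sliding plan is too vague to be checked, and I do not believe it assembles as described. The base picture for $y_{i,i+1}$ contains no lines of colors $\ge i+2$ or $\le i-1$ for a double dot to slide across, so one must first \emph{build in} additional colored lines. The paper does this with concentric circles: by (\ref{circlearoundf}), surrounding a polynomial $f$ with a $j$-colored circle produces $f-s_jf$, and the algebraic identities $s_{j+1}y_{i,j}-y_{i,j}=y_{i,j+1}$ and $s_{i-1}y_{i,j}-y_{i,j}=y_{i-1,j}$ then give an induction on $j-i$: start with $\alpha_k$ plus $4$ dots, and surround with circles colored $k+2,\dots,j$ and $k-1,\dots,i$. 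Your telescoping identity $f_i+2f_{i+1}+\cdots+2f_{j-1}+f_j=x_i+x_{i+1}-x_j-x_{j+1}$ is a correct algebraic observation, but it is not by itself a geometric construction, and the claim that all broken-line corrections vanish by (\ref{needle}) or collapse under Proposition~\ref{prop-reduction} is asserted without justification.

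The genuine gap is in part~(b). The ``absorption'' argument --- that a boundary dot of color $k\in\ii$ allows one to cancel a factor of $f_k$ from the polynomial load --- is not a valid move, and it conflates divisibility with the module-theoretic fact you actually need. Knowing $y_{i,j}\cdot\phi_\ii=0$, i.e.\ $g_ig_j\cdot z_{i,j,\ii}\cdot\phi_\ii=0$, does \emph{not} imply $z_{i,j,\ii}\cdot\phi_\ii=0$; that would require $g_ig_j$ to act faithfully, which is the opposite of what is going on. The correct mechanism, which the paper uses, is to replace circles with \emph{needles} for indices in $\ii$: by (\ref{needlearoundf}), enclosing $f$ in the eye of a $j$-colored needle produces $\partial_j(f)=\frac{f-s_jf}{f_j}$ next to a $j$-colored boundary dot. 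Thus the needle both supplies the required boundary dot of $\phi_\ii$ and divides by $f_j$ --- it applies $\partial_j$ in place of $1-s_j$. Together with the base case obtained by adding $2$ or $3$ (rather than $4$) dots to (\ref{switch}), this is how $z_{i,j,\ii}$ is produced. Without the circle-versus-needle dichotomy, the proposal does not establish the containment for $\ii\ne\emptyset$.
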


We will prove that these actually generate the ideal in Proposition \ref{prop-generators}, but postpone the proof as it is long and unenlightening.

\begin{proof}

Adding 4 dots to $\alpha_i$, or 6 dots to a 6-valent vertex, we get

\begin{equation} \ig{.8}{4dotswitch}. \end{equation}

This is $y_{i,i+1}=f_if_{i+1}(f_i+f_{i+1})=(x_i-x_{i+1})(x_{i+1}-x_{i+2})(x_i-x_{i+2})$. Even though we are not allowing 6-valent vertices in our diagrams, we will sometimes express
$y_{i,i+1}$ as \begin{center} $\ig{.8}{6alldots}$ or $\ig{.8}{alpha4dots}$ \end{center} to avoid having to consider sums of graphs (it's easier for me to draw!).

To obtain the other $y_{i,j}$, note the following equalties under the action of $S_{n+1}$ on $R$:

\begin{eqnarray}
s_if_{i+1} &=& f_i+f_{i+1}\\
s_{i+1}f_i &=& f_i+f_{i+1}\\
s_if_i &=& -f_i\\
s_if_j &=& f_j\ \ \mathrm{for}\ |i-j|>1
\end{eqnarray}

From this it follows by explicit calculation that

\begin{eqnarray}
s_{i-1}y_{i,j}-y_{i,j}=y_{i-1,j}\\
s_{j+1}y_{i,j}-y_{i,j}=y_{i,j+1}
\end{eqnarray}

Now, when we surround a polynomial $f$ with a $j$-colored circle and use (\ref{polyhole}), we are left with a $j$-colored double dot times $\partial_j(f)$, so we get $f-s_jf =
\partial_j(f)f_j$.

\plabeltwo{$f$}{360}{424}{$f-s_jf$}{414}{424}
\begin{equation} \label{circlearoundf} \ig{1.3}{circlearoundf} \end{equation}

Combining this with the calculations we just made, we see that a $j+1$ circle around $y_{i,j}$ will yield $y_{i,j+1}$ up to sign, etcetera. We now have numerous ways to express $\pm
y_{i,j}$: for any $i \le k \le j-1$ take $\alpha_k$ with 4 dots to get $y_{k,k+1}$, and then surround it with concentric circles whose colors, from inside to out, are $k+2,k+3,\ldots,j$
and then $k-1,k-2,\ldots,i$.

\igc{.8}{yij}

Clearly the colors of the increasing sequence and those of the decreasing sequence are distant, so a sequence like $k-1,k+2,k+3,k-2,\ldots$ is also okay, or any permutation which
preserves the order of the increasing and the decreasing sequence individually.

For very similar reasons, $z_{i,j,\ii}$ is in the TL ideal of $\ii$. Adding two or three dots to (\ref{switch}), we get several more equations.

\begin{equation} \ig{.8}{3dotswitcha} \end{equation}

\begin{equation} \ig{.8}{3dotswitchb} \end{equation}

\begin{equation} \ig{.8}{2dotswitch} \end{equation}

Again, for various of these pictures we use shorthand like \begin{center} $\ig{.8}{6somedots}$ or $\ig{.8}{alphasomedots}$ \end{center}

These give you $z_{i,i+1,\ii}$ in the case where at least one of $i,i+1 \in \ii$. Again by (\ref{polyhole}), putting a polynomial $f$ in the eye of a $j$-colored needle will yield
$\partial_j(f)=\frac{f-s_jf}{f_j}$ next to a $j$-colored boundary dot.

\plabeltwo{$f$}{360}{424}{$\partial_jf$}{414}{424}
\begin{equation} \label{needlearoundf} \ig{1.2}{needlearoundf} \end{equation}

This gives us several ways to draw $z_{i,j,\ii}$.

If neither $i$ nor $j$ are in $\ii$, then $z_{i,j,\ii}=y_{i,j}$ and is pictured as above, but with additional boundary dots put below to account for $\phi_{\ii}$. Since these extra dots
are generally irrelevant, we often do not bother to draw them.

If $i\in\ii$ and $j \notin \ii$, we have two ways of drawing $z_{i,j,\ii}$. One can take $\alpha_i$, connect one $i$ input to the outer boundary, add dots, and surround it with circles
colored $i+2,i+3,\ldots,j$.

\begin{equation} \ig{.8}{zijCase1} \label{case1} \end{equation}

Alternatively, take some $i<k<j$, add dots to $\alpha_k$, and surround it with circles forming an increasing sequence $k+2 \ldots j$ and a decreasing sequence $k-1 \ldots i$,
\emph{except} that the final $i$-colored circle is a needle.

\begin{equation} \ig{.8}{zijCase1a} \label{case1a} \end{equation}

The case of $j \in \ii$ and $i \notin \ii$ is obvious.

If both $i,j \in \ii$ then we have several choices again. If $j=i+1$ then we must use 

\begin{equation} \ig{.8}{zijCase3} \label{case3} \end{equation}

but in general we may either repeat (\ref{case1}) with a $j$-needle instead of a $j$-circle

\begin{equation} \ig{.8}{zijCase2} \label{case2} \end{equation}

or repeat (\ref{case1a}) with a $j$-needle instead of a $j$-circle.

\begin{equation} \ig{.8}{zijCase4} \label{case4} \end{equation}

In any case, it is clear that the polynomials above are in the TL ideal, and the claim is proven.
\end{proof}

Let us quickly consider the redundancy in this generating set of the ideal. When $i>j$ let $y_{i,j} \define y_{j,i}$ and $z_{i,j,\ii} \define z_{j,i,\ii}$.

\begin{cor} Suppose $\ii$ is non-empty, and fix an index $k \in \ii$. Then $I_{\ii}$ is
  generated by $z_{k,j,\ii}$ for $1\le j \le n$, $j \ne k$. None of these generators is redundant.

  None of the generators $y_{i,j}$ of $I_{\emptyset}$ are redundant.
\end{cor}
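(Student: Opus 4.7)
Let me write $L_{i,j}=f_i+2f_{i+1}+\cdots+2f_{j-1}+f_j$, so that $y_{i,j}=f_if_jL_{i,j}$ and $z_{i,j,\ii}=y_{i,j}/(g_ig_j)$. The proof splits into two parts: showing the claimed subset generates $I_{\ii}$ (for non-empty $\ii$), and showing non-redundancy in both cases.

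The plan for generation is to exploit the identity $L_{i,j}=x_i+x_{i+1}-x_j-x_{j+1}$ (as computed in the proof of Proposition \ref{prop-containment}), which implies the additivity $L_{i,j}=L_{i,k}+L_{k,j}$ for any intermediate index $k$, with the convention $L_{j,i}=-L_{i,j}$. Plugging this into $y_{i,j}=f_if_jL_{i,j}$ and dividing by $g_ig_j$, one gets $z_{i,j,\ii}$ as an explicit $R$-linear combination of $z_{k,i,\ii}$ and $z_{k,j,\ii}$, where the polynomial coefficients involve $f_i$, $f_j$, and the $g$'s and are dictated by the four cases based on whether $i,j$ lie in $\ii$ (the cases are parallel, differing only by which $f$-factors survive the division). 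This shows $I_{\ii}\subseteq\langle z_{k,j,\ii}:j\ne k\rangle$, and the reverse containment is Proposition \ref{prop-containment}.

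For non-redundancy in both cases, the plan is to apply the graded Nakayama lemma: since $I$ is a homogeneous ideal in the graded-local ring $R$ with irrelevant ideal $\mathfrak{m}=(f_1,\ldots,f_n)$, a homogeneous generating set is minimal if and only if it descends to a $\Bbbk$-basis of $I/\mathfrak{m}I$. For $I_{\emptyset}$ this is easy: the $y_{i,j}$ all sit in degree $6$ while $\mathfrak{m}I_{\emptyset}$ starts in degree $8$, so $(I_{\emptyset}/\mathfrak{m}I_{\emptyset})_6$ is simply the $\Bbbk$-span of the $y_{i,j}$. Expanding $y_{i,j}=f_i^2f_j+2f_if_{i+1}f_j+\cdots+f_if_j^2$ one sees that the monomial $f_i^2f_j$ (with $i<j$) appears with coefficient $1$ in $y_{i,j}$ and in no other $y_{i',j'}$, so the $y_{i,j}$ are $\Bbbk$-linearly independent.

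For $I_{\ii}$ with $\ii\ne\emptyset$, the generators $z_{k,j,\ii}$ sit in degree $2$ when $j\in\ii$ and degree $4$ when $j\notin\ii$, so Nakayama splits across these two strata. In degree $2$ the generators are the signed linear forms $L_{k,j}$, and one proves independence in $R_2$ directly by ordering $\ii\setminus\{k\}$ by distance from $k$ and inspecting the coefficient of the extremal $f_j$, which produces a triangular change-of-basis matrix with $\pm1$ on the diagonal. The hard step is degree $4$: one must show $\{f_jL_{k,j}\}_{j\notin\ii}$ are linearly independent modulo $R_2\cdot\Bbbk\{L_{k,j'}:j'\in\ii\setminus\{k\}\}$, and this is the main obstacle because $R_2V$ is a subtle subspace of $\mathrm{Sym}^2(R_2)$. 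My plan for this step is to pass to the quotient $\bar R=R/(L_{k,j'}:j'\in\ii\setminus\{k\})$, which is again a polynomial ring in $n-|\ii|+1$ variables since the $L_{k,j'}$ were shown to be $\Bbbk$-independent above, and then prove independence of $\{\bar f_j\bar L_{k,j}\}_{j\notin\ii}$ in $\bar R_4$ via the same monomial-tracking technique that worked for the $y_{i,j}$: after a change of basis in $R_2$ adapted to $\ii$, distinguished degree-$4$ monomials (analogues of $f_j^2$) appear in $\bar f_j\bar L_{k,j}$ and not elsewhere, ruling out any nontrivial dependence and closing the Nakayama argument.
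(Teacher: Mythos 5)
Your generation argument is the paper's in a cleaner dress: the explicit combinations
$z_{i,j,\ii}=f_iz_{k,j,\ii}-f_jz_{k,i,\ii}$ (for $k<i<j$), $z_{i,j,\ii}=f_iz_{k,j,\ii}+f_jz_{i,k,\ii}$ (for $i<k<j$), etc., that the paper simply asserts are exactly what your additivity $L_{i,j}=L_{i,k}+L_{k,j}$ produces after multiplying by $f_if_j$ and dividing by $g_ig_j$; identifying that additivity as the single mechanism behind all the cases is a genuine clarification. Where you diverge is in the irredundancy claims, which the paper explicitly defers (``a proof will also arise as a byproduct in the next section''), where it falls out of the Bergman Diamond Lemma: if some $y_{i,j}$ or $z_{k,j,\ii}$ were redundant, the reduction system obtained by dropping the corresponding rewriting rule would have a strictly larger set of irreducible monomials, contradicting the basis the Diamond Lemma produces. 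Your route via graded Nakayama is a legitimate alternative that avoids the Diamond Lemma entirely and is arguably more standard. Note, though, that both routes share one logical dependency: Nakayama tells you a homogeneous set generates $I$ iff it spans $I/\mathfrak{m}I$, so concluding irredundancy from linear independence in $I/\mathfrak{m}I$ requires first knowing that these elements generate $I$ at all --- which is Proposition \ref{prop-generators}, proved only later. This is not a flaw (the paper has the same forward dependency), but worth stating.

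On the details: the $I_\emptyset$ step (degree $6$, tracking the monomial $f_i^2f_j$) is complete and correct. For $I_\ii$, the degree-$2$ step works, though ``ordering by distance'' should read ``farthest from $k$ first'' so that the extremal $f_j$ appears only in $L_{k,j}$; and the diagonal entries are in fact all $+1$, not merely $\pm1$. The degree-$4$ step is the one you flag as hard, and your plan does go through, but the key verification is left implicit and deserves a sentence: in the basis $\{\bar f_m : m\notin\ii\}\cup\{\bar f_k\}$ of $\bar R_2$, one needs that $\bar L_{k,j}$ has coefficient $1$ on $\bar f_j$ and involves only basis elements with index strictly between $k$ and $j$ (inclusive of $k$). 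This follows from the telescoping $\bar L_{k,j}=\bar L_{m_r,j}$ where $m_r$ is the element of $\ii$ nearest to $j$ on the $k$ side, together with the observation that rewriting $\bar f_{m_r}$ via $\bar L_{k,m_r}=0$ only introduces indices closer to $k$. With that in hand, $\bar f_j^2$ appears with coefficient $1$ in $\bar f_j\bar L_{k,j}$ and in no other $\bar f_{j'}\bar L_{k,j'}$, closing the independence claim. So: correct, a different and somewhat more self-contained argument for irredundancy than the paper's, at the cost of one computation you sketch but do not carry out.
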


\begin{proof} We leave the checks of irredundancy to the reader, but a proof will also arise as a
  byproduct in the next section (see Remark \ref{rmk-irredundant}). 

  Suppose that $k \in \ii$ but $i,j \notin \ii$.  If $k<i<j$, then
  $z_{i,j,\ii}=y_{i,j}=f_iz_{k,j,\ii}-f_jz_{k,i,\ii}$ so that $z_{i,j,\ii}$ is redundant.  If
  $i<k<j$, then $z_{i,j,\ii}=f_iz_{k,j,\ii}+f_jz_{i,k,\ii}$. A similar statement holds for $i<j<k$.
  In the same vein, if $k,l \in \ii$ but $i \notin \ii$, then given $z_{k,l,\ii}$ only one of $z_{k,i,\ii}$
  or $z_{l,i,\ii}$ is needed, and if $k,l,m \in \ii$, then any 2 of the three pairwise relations
  will imply the third.
\end{proof}

%
\subsection{Graded Dimensions}
\label{subsec-gradeddim}
%

In this section, fix a non-repeating sequence $\ii$. We assume in this section that the generators of $I_{\ii}$ are precisely the polynomials described in Proposition
\ref{prop-containment}.

\begin{notation} An element of $R$ can be written as a polynomial in $f_i$, so let $x=f_1^{a_1}\ldots f_n^{a_n}$ be a general monomial. Choose any $\ii$, possibly empty. Given a monomial
$x$, let $J_x \subset \{1,\ldots,n\}$ be the subset containing $\ii$ and all indices $j$ such that $a_j \ne 0$. For a fixed subset $J$, let $R_J$ be the subset of all monomials $x$ with
$J_x=J$.  This inherently depends on the choice of $\ii$.

Under the map $R \to \HOM(U_{\emptyset},U_{\ii})$, the image of $R_J$ will be graphs where the colors appearing are precisely $J$. Every color in $\ii$ appears as a boundary dot, and
every $f_j$ corresponds to a double dot of that color. The case $J = \emptyset$ only occurs when $\ii = \emptyset$, and $R_\emptyset=\{1\}$. \end{notation}

To find a basis for $R/I_{\ii}$ we will use the Bergman Diamond Lemma \cite{Berg} for commutative rings:

\begin{defn} Let $A$ be a free commutative polynomial ring, where monomials are given a partial order with the DCC, compatible with multiplication in that $x<y \implies ax<ay$. Let $I$ be
an ideal generated by relations $r$ of the form $x_r = y_r$ where $x_r$ is a monomial and $y_r$ is a linear combination of monomials which are each less than $x_r$ in the partial order. A
\emph{reduction} is an application of a relation $r$ to replace $x_r$ with $y_r$, but not the other way around (a reduction always lowers the partial order on each term in a
polynomial). We say a polynomial $x$ \emph{reduces} to $y$ if $y$ can be obtained from $x$ by a series of reductions applied to monomials in $x$. A monomial is called \emph{irreducible}
if it does not have $x_r$ as a factor for any relation $r$. An \emph{inclusion ambiguity} is a monomial $x=ab$ where $x=x_r$ for some $r$, and $b=x_{r'}$ for some $r' \ne r$. An
\emph{overlap ambiguity} is a monomial $x=abc$ where $ab=x_r$ for some $r$ and $bc=x_{r'}$ for some $r' \ne r$. Each ambiguity has two natural reductions, and we say the ambiguity is
\emph{resolvable} if the two reductions are then jointly reducible to the same element.\end{defn}

\begin{lemma} (Bergman diamond lemma for commutative rings, \cite{Berg}) With these definitions in place, if every inclusion and overlap ambiguity is resolvable, then the images of the
irreducible monomials form a basis for $A/I$. \end{lemma}

This process may become more transparent from the example below; in addition, Bergman's paper has a number of nice examples for the trickier, non-commutative version. We treat two
separate cases, when $\ii=\emptyset$ and when $\ii \ne \emptyset$.

\begin{claim} Let $\ii=\emptyset$. We place the lexicographic order on monomials in $R$, so that $f_1 < f_2 < \ldots$. The relation $y_{i,j}=0$ for $i<j$ will be rewritten $f_if_j^2=
-f_if_j(f_i + 2\sum_{i<k<j}f_k)$, which replaces $f_if_j^2$ with a sum of monomials all lower in the order. For each $J \ne \emptyset$, the irreducible monomials in $R_J$ are
precisely $f_k^m \prod_{i \in J}f_i$, where $k$ is the minimal index in $J$ and $m \ge 0$ (note: the exponent of $f_k$ is $m+1 \ge 1$). When $J= \emptyset$, $1$ is irreducible.
Irreducibles form a basis for $R/I_{\emptyset}$. \end{claim}

\begin{proof} A monomial is irreducible if $f_if_j^2$ never appears as a factor for any $i<j$. Because of this, the classification of irreducible monomials in each $R_J$ is obvious. There
are no inclusion ambiguities between relations, since they are all homogeneous and degree 3. There are two kinds of overlap ambiguities, both labelled by a choice of $i<l<j$.

For the first ambiguity, one can reduce $x=f_if_lf_j^2$ by either reducing $f_lf_j^2$ or $f_if_j^2$. Applying the former reduction, $x \mapsto f_if_lf_j(-f_l - 2\sum_{l<k<j}f_k)$, which
has a term given by $-f_if_l^2f_j$ that can be further reduced, yielding $f_if_lf_j(f_i+2\sum_{i<k<l}f_k - 2\sum_{l<k<j}f_k)$. Applying the latter reduction, $x \mapsto f_if_lf_j(-f_i -
2\sum_{i<k<j}f_k) = f_if_lf_j(-f_i - 2\sum_{i<k<l}f_k - 2f_l - 2\sum_{l<k<j}f_k)$, which has a term given by $-2f_if_l^2f_j$ that can be further reduced, yielding $f_if_lf_j(-f_i -
2\sum_{i<k<l}f_k - 2\sum_{l<k<j}f_k + 2f_i + 4\sum_{i<k<l}f_k) = f_if_lf_j(f_i + 2\sum_{i<k<l}f_k - 2\sum_{l<k<j}f_k)$. Since these agree, the ambiguity is resolvable.

For the second ambiguity, one can reduce $x=f_if_l^2f_j^2$ be either reducing $f_if_l^2$ or $f_lf_j^2$. A very similar calculation shows that this ambiguity is resolvable as well.
Therefore the Bergman diamond lemma implies that irreducibles form a basis for the quotient. \end{proof}

\begin{remark} \label{rmk-irredundant} This also proves that none of the $y_{i,j}$ is redundant. Removing $y_{i,j}$ from the ideal, we may apply the same Bergman diamond lemma argument to
say that irreducibles form a basis for the quotient. However, with no $y_{i,j}$ the monomial $f_if_j^2$ is irreducible, and the quotient is larger than before. A similar statement can be
made about the $z_{k,j,\ii}$ below. \end{remark}

When $J \ne \emptyset$, the graded rank of the irreducibles in $R_J$ is $\frac{t^{2|J|}}{1-t^2}$. When $J$ is empty, the only element of $R_J$ is $1$. So the graded rank of
$R/I_{\emptyset}$ is $1 + \sum_{J\ne \emptyset}\frac{t^{2|J|}}{1-t^2}$. But $\sum_{J}t^{2|J|}=(1+t^2)^n$ since every $f_i$ may either appear or not appear, independently of every other.
Hence $\sum_{J\ne\emptyset}t^{2|J|}=(1+t^2)^n-1$. Putting it all together, the graded rank is $\frac{(1+t^2)^n-t^2}{1-t^2}=\frac{t^n\qtwo^n-t^2}{1-t^2}$.  Hence we have proven

\begin{claim} The graded dimension of $R/I_{\emptyset}$ is exactly $\epsilon_{\rm{cat}}(u_{\emptyset})$. \end{claim}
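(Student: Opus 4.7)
The plan is to package the three preceding claims into a single computation. The Bergman Diamond Lemma step already tells us that the irreducible monomials $f_k^m\prod_{i\in J}f_i$, with $k=\min J$ and $m\ge 0$, form a basis of $R/I_{\emptyset}$ as $J$ ranges over nonempty subsets of $\{1,\ldots,n\}$ (together with the monomial $1$ for $J=\emptyset$). Since these monomials are homogeneous, it suffices to add up the graded ranks contributed by each $J$ and compare the result with the explicit formula $\psi_0(\emptyset) = \frac{t^n}{1-t^2}\qtwo^n - \frac{t^2}{1-t^2}$. Because $d = |\ii| = 0$ when $\ii = \emptyset$, the grading shift $\{d\}$ is trivial and can be ignored.

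First I would record that, for a fixed nonempty $J$, the free parameter is the exponent $m$ of $f_k$, so the graded rank of the irreducibles in $R_J$ is $\frac{t^{2|J|}}{1-t^2}$: the factor $t^{2|J|}$ comes from the product $\prod_{i\in J}f_i$ (each $f_i$ has degree $2$), and $\frac{1}{1-t^2}$ from the geometric series in $m$. For $J=\emptyset$ the only irreducible is $1$, contributing $1$ to the total. Summing,
\begin{equation*}
\gdim\bigl(R/I_{\emptyset}\bigr) \;=\; 1 + \sum_{J \ne \emptyset}\frac{t^{2|J|}}{1-t^2}.
\end{equation*}

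Next I would simplify using $\sum_{J \subseteq\{1,\ldots,n\}} t^{2|J|} = (1+t^2)^n$ (each $f_i$ either appears or does not), giving $\sum_{J\ne\emptyset} t^{2|J|} = (1+t^2)^n - 1$. Therefore
\begin{equation*}
\gdim\bigl(R/I_{\emptyset}\bigr) \;=\; \frac{(1-t^2) + (1+t^2)^n - 1}{1-t^2} \;=\; \frac{(1+t^2)^n - t^2}{1-t^2}.
\end{equation*}
Finally, the algebraic identity $(1+t^2)^n = t^n(t^{-1}+t)^n = t^n\qtwo^n$ rewrites the numerator as $t^n\qtwo^n - t^2$, which matches $\psi_0(\emptyset)$ exactly.

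There is no genuine obstacle here: the hard work — verifying the coherence conditions for the Diamond Lemma and producing the classification of irreducibles — has been discharged in the two preceding claims. The only mildly tricky step is the clean generating-function manipulation above, together with recognizing $(1+t^2)^n = t^n\qtwo^n$ so that the combinatorial answer meets the representation-theoretic formula for $\psi_0(\emptyset)$ on the nose.
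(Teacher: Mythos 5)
Your proof is correct and mirrors the paper's own computation exactly: sum the contributions $\frac{t^{2|J|}}{1-t^2}$ over nonempty $J$ plus $1$ for $J=\emptyset$, use $\sum_J t^{2|J|}=(1+t^2)^n$, and rewrite $(1+t^2)^n = t^n\qtwo^n$ to match $\psi_0(\emptyset)$. The explicit remark that $d=0$ makes the shift $\{d\}$ vacuous here is a sensible clarification the paper leaves implicit.
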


\begin{claim} Let $\ii \ne \emptyset$, and fix $k \in \ii$. We choose a different order on indices, where $k<k+1<k-1<k+2<k-2< \ldots$, and then place the lexicographic order on monomials.
The relation $z_{k,j,\ii}$ for $j \ne k$ will be rewritten in order-decreasing format as either $f_j^2=-f_j(f_k + 2\sum_{l}f_l)$ for $j \notin \ii$, or $f_j=-(f_k + 2\sum_{l}f_l)$ for
$j \in \ii$, where the sum is over $l$ between $k$ and $j$. Then the irreducible monomials in $R_J$ are precisely $f_k^m \prod_{j \in J \setminus \ii} f_j$ for $m \ge 0$. Irreducibles
form a basis for $R/I_{\ii}$. \end{claim}

\begin{proof} An irreducible polynomial will be a polynomial which does not have $f_j^2$ as a factor, for $k \ne j \notin \ii$, and does not have $f_j$ as a factor for $k \ne j \in \ii$.
The classification of irreducibles in $R_J$ is now obvious. There are no ambiguities whatsoever, so we are done by the Bergman diamond lemma. \end{proof}

The graded rank of irreducibles in $R_J$ is $\frac{t^{2|J|-2d}}{1-t^2}$, for $d$ the length of $\ii$ (remember that $\ii \subset J$). Thus the graded rank of $R/I_{\ii}$ is $\sum_{\ii
\subset J}\frac{t^{2|J|-2d}}{1-t^2} = \frac{(1+t^2)^{n-d}}{1-t^2}$, and the graded rank of $R/I_{\ii}\{d\}$ is $t^d\frac{(1+t^2)^{n-d}}{1-t^2}=\frac{t^n\qtwo^{n-d}}{1-t^2}$. Hence,

\begin{claim} The graded dimension of $R/I_{\ii}\{d(\ii)\}$ is exactly $\epsilon_{\rm{cat}}(u_{\ii})$. \end{claim}
	
This is clearly sufficient to prove Lemma \ref{mainlemma}, modulo Proposition \ref{prop-generators}.

%
\subsection{Weyl Lines and Disoriented Tubes}
\label{subsec-weyllines}
%

We now give two alternate interpretations of the TL ideals $I_{\ii}$. We continue to assume that $\ii$ is non-repeating and $z_{i,j,\ii}$ generates $I_{\ii}$.

\begin{defn} Let $V$ be the reflection representation of $S_{n+1}$, such that $R=\C[f_1,\ldots,f_n]$ is the coordinate ring of $V$. Note that the linear equations which cut out
reflection-fixed hyperplanes are precisely $w_{i,j}=f_i+f_{i+1}+\ldots+f_j =x_i-x_{j+1}$ for $i\le j$. A \emph{Weyl line} is a line in $V$ through the origin which is
defined by the intersection of reflection-fixed hyperplanes; it is given by a choice of $n-1$ transversely-intersecting reflection-fixed hyperplanes. Given a non-repeating sequence
$\ii$, we say a Weyl line is \emph{transverse to} $\ii$ if it is transverse to (i.e. not contained in) the hyperplanes $f_k=0$ for each $k \in \ii$. \end{defn}

\begin{prop}\label{prop-weyllines} The TL ideal of $\ii$ is the ideal associated with the union of all Weyl lines transverse to $\ii$ (with its reduced scheme structure). \end{prop}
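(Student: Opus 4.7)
Let $J_{\ii}$ denote the radical ideal of $X_{\ii} := \bigcup_L L$, the union being taken over Weyl lines $L$ transverse to $\ii$. The plan is to prove both containments $I_{\ii} \subseteq J_{\ii}$ and $J_{\ii} \subseteq I_{\ii}$.

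For $I_{\ii} \subseteq J_{\ii}$, I check that each generator $z_{i,j,\ii}$ vanishes on every such $L$. By telescoping, the cubic factor of $y_{i,j} = f_i f_j \cdot (f_i + 2f_{i+1} + \cdots + 2f_{j-1} + f_j)$ equals $(x_i - x_j) + (x_{i+1} - x_{j+1})$. A Weyl line corresponds to a 2-block partition $A \sqcup B = \{1, \ldots, n+1\}$ with $x_a = c_A$ for $a \in A$ and $x_b = c_B$ for $b \in B$; case analysis on which blocks contain $i, i+1, j, j+1$ shows that either $f_i$ or $f_j$ vanishes on $L$, or else $x_i - x_j$ and $x_{i+1} - x_{j+1}$ both equal $\pm(c_A - c_B)$ with opposite signs and sum to zero. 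Hence $y_{i,j}|_L = 0$, and since $z_{i,j,\ii} = y_{i,j}/(g_i g_j)$ only divides out factors $f_k$ with $k \in \ii$ (nonzero on $L$ transverse to $\ii$), $z_{i,j,\ii}|_L = 0$ as well.

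For the reverse, the first containment yields a surjection $R/I_{\ii} \twoheadrightarrow R/J_{\ii}$, which embeds further into $\bigoplus_C \C[d_C]$, where $C$ runs over subsets of $\{1, \ldots, n\}$ containing $\ii$ (each such $C$ is the \emph{change set} of a unique Weyl line $L_C$ transverse to $\ii$, with coordinate $d_C$). It suffices to show the composition $R/I_{\ii} \to \bigoplus_C \C[d_C]$ is injective. Writing $f_j|_{L_C} = \sigma_C(j) d_C$ with $\sigma_C(j) \in \{-1, 0, +1\}$ nonzero precisely when $j \in C$, an irreducible basis element $f_k^m \prod_{j \in J \setminus \ii} f_j$ from Section \ref{subsec-gradeddim} restricts on $L_C$ to $\pm d_C^{m+|J|-d}$ if $J \subseteq C$ and to $0$ otherwise. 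Fixing $t$-degree $2K$, the restriction becomes a matrix with rows indexed by $C$ and columns by pairs $(J, m)$ satisfying $m + |J| - d = K$; since $m$ is determined by $|J|$, the columns may be reindexed by $J$ alone, giving entries $\pm[J \subseteq C]$.

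Ordering rows and columns by a total refinement of reverse inclusion (larger subsets first) makes this matrix upper triangular with $\pm 1$ on the diagonal; it is therefore invertible in the square case $K \ge n - d$, and otherwise its columns form a subset of the columns of a square invertible matrix and remain linearly independent. This proves injectivity of the restriction map in every degree, forcing $I_{\ii} = J_{\ii}$. The main obstacle is the combinatorial setup -- establishing the bijection between irreducible monomials and the indices $(J, m)$, and between Weyl lines transverse to $\ii$ and change sets $C \supseteq \ii$, in a way that the restriction map becomes a signed incidence matrix; once this is in place, the triangularity (and hence invertibility) is immediate. The case $\ii = \emptyset$ needs only minor notational adjustments (the basis element uses $k = \min J$ rather than a fixed $k \in \ii$, and the degree-$0$ element $1$ is handled separately), but the same argument applies.
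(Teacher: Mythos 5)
Your proof is correct, and it takes a genuinely different route from the paper's. The paper argues set-theoretically by induction on $n$: it restricts to a hyperplane $f_k = 0$, observes that the images of the $y_{i,j}$ (resp.\ $w_{i,j}$) become the analogous polynomials for the smaller root system so the vanishing loci match there by induction, and then checks directly that on the open locus where all $f_k \ne 0$ both families of polynomials cut out the single line $-f_1 = f_2 = -f_3 = \cdots$; the passage from $\emptyset$ to general $\ii$ is handled by noting that $z_{i,j,\ii}$ and $y_{i,j}$ differ by a unit off $\bigcup_{k\in\ii}\{f_k=0\}$, and the statement about the reduced scheme structure (which is what the proposition literally asserts) is relegated to a subsequent remark. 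Your argument proves the ideal-theoretic equality head-on: you verify $I_{\ii}\subseteq J_{\ii}$ by evaluating the telescoped cubic $(x_i - x_j) + (x_{i+1} - x_{j+1})$ on each $2$-block Weyl line, and then prove the reverse inclusion by showing the composite $R/I_{\ii}\to\bigoplus_C\Bbbk[d_C]$ is degreewise injective, using the Diamond Lemma basis from Section \ref{subsec-gradeddim} to identify the matrix of the degree-$2K$ restriction map as a signed inclusion-incidence matrix, which is triangular (hence of full column rank) in the refinement of reverse inclusion. What the paper's approach buys is brevity and no reliance on the explicit irreducible-monomial basis; what yours buys is a direct, non-inductive proof of the reduced scheme structure in the same stroke, which the paper only asserts in passing. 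Both approaches, like the paper's, presuppose Proposition \ref{prop-generators} (that the $z_{i,j,\ii}$ generate $I_{\ii}$), which is proved later in Section \ref{subsec-pfprop}; this forward reference is already present in the paper's own ordering of statements, so it is not a defect of your argument.
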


\begin{example} Let $n=3$. One can check that $f_1f_2(f_1+f_2)=f_2f_3(f_2+f_3)=f_1f_3(f_1+2f_2+f_3)=0$ cuts out 7 lines in $V$, namely \begin{enumerate} \item $f_1 = f_2 = f_1+f_2 = 0$
\item $f_1 = f_3 = 0$ \item $f_2 = f_3 = f_2+f_3 = 0$ \item $f_1 = f_2+f_3 = f_1+f_2+f_3 = 0$ \item $f_1+f_2 = f_3 = f_1+f_2+f_3 = 0$ \item $f_2 = f_1+f_2+f_3 = 0$ \item $f_1+f_2 =
f_2+f_3 = 0$ \end{enumerate} These 7 lines are precisely the 7 lines cut out by the intersection of pairs of reflection-fixed hyperplanes. There are 6 reflection-fixed hyperplanes, given
by equations $f_1$, $f_2$, $f_3$, $f_1+f_2$, $f_2+f_3$, and $f_1+f_2+f_3$, or alternatively, by $x_i-x_j$ for $4\ge j > i \ge 1$. Intersecting pairs of hyperplanes will give a line, and
occassionally this line is forced to lie in a third hyperplane, as in the list above. One can check that this list covers all pairs of hyperplanes which give distinct lines as their
intersection. \end{example}

\begin{proof} This is not difficult to show, but since we have not seen it elsewhere, we provide a complete proof. First we show by induction on $n$ that the ideal $I_{\emptyset}$ cuts
out the Weyl lines with the reduced scheme structure. The case $n=1$ is trivial (and $n=2$ is also obvious).

For any $1 \le k \le n$, consider the hyperplane $f_k=0$ as an $n-1$-dimensional space $V'$, with an action of $S_{n+1}/<s_k> \cong S_n$. Giving $S_n$ a Coxeter structure with simple
reflections $s_i$ for $i\ne k$ (note that $s_{k+1}=(k+1, k+2) = (k, k+2)$ in the quotient), it is quite easy to see that $V'$ is the reflection representation of $S_n$. Moreover, the Weyl
hyperplanes are cut out by $w'_{i,j}=f_i+f_{i+1}+\ldots+f_j$ (where $f_k=0$ so it may be left out of the sum) for $i,j \ne k$, and the equivalent polynomials $y'_{i,j}$ also have the same
formulae, and are indexed by $i,j \ne k$. Therefore, for $i,j \ne k$, the images of $w_{i,j}$ are just $w'_{i,j}$, and the same for $y_{i,j}$ and $y'_{i,j}$. Moreover, if either $i$ or
$j$ equals $k$, then $y_{i,j}=0$ on $f_k=0$, and $w_{i,j}$ is redundant on $f_k=0$, being equal to some $w_{i',j'}$. By induction, $y'_{i,j}$ cut out the Weyl lines with the reduced
scheme structure on $V'$, and therefore the vanishing set of $y_{i,j}$ agrees with the Weyl lines on $f_k=0$.

If all $f_k \ne 0$, then it is easy to see that the $y_{i,j}$ cut out a single line with the reduced scheme structure, namely $-f_1=f_2=-f_3=\ldots=(-1)^nf_n$. This is a Weyl line, the
intersection of all $w_{i,i+1}$. We wish to show this is the only Weyl line transverse to all $f_k=0$. We can show this by induction as well (again, the base case $n=2$ is easy). Suppose
we are given $n-1$ transverse hyperplanes $w_{i,j}$. If any two both involve the index $n$, i.e. $w_{i,n}$ and $w_{j,n}$, then we may replace the pair with $w_{i,n}$ and $w_{i,j}$ since
they have the same intersection (and $w_{i,j}$ is not already in the set, or the intersection would not be transverse). So we may assume that at most one of the chosen hyperplanes
involves the index $n$. But then we have $n-2$ transverse hyperplanes which only involve indices $\{1,\ldots,n-1\}$, which must then be mutually transverse to $f_n=0$. Letting $V'$ be the
hyperplane $f_n=0$ viewed as a reflection representation as above, we have $n-2$ transverse hyperplanes which cut out a Weyl line transverse to $f_k=0$ for all $1 \le k \le n-1$. By
induction, that Weyl line is $-f_1=f_2=-f_3=\ldots=(-1)^{n-1}f_{n-1}$ (which holds true modulo $f_n=0$). But repeating the same argument for the index $k$ instead, we leave out the $k$-th
term and get $-f_1=f_2=\ldots=\hat{(-1)^kf_k}=\ldots=(-1)^nf_n$ modulo $f_k=0$. Together, all these equalities imply that $-f_1=f_2=\ldots=(-1)^nf_n$ everywhere.

One might be worried, because of the restrictions used in the induction step, that $I_{\ii}$ does not give the reduced structure on the Weyl lines at the origin. However, $I_{\ii}$ is a
homogeneous ideal which cuts out a reduced 0-dimensional subscheme of $\mathbb{P}(V)$, so that its vanishing on $V$ is the cone of a reduced scheme, and hence is reduced. This
concludes the proof that $I_{\emptyset}$ cuts out the Weyl lines with the reduced scheme structure.

For $\ii \ne \emptyset$, $I_{\emptyset} \subset I_{\ii}$ and the vanishing of $I_{\ii}$ is contained in that of $I_{\emptyset}$. Choose $k \in \ii$. If $f_k=0$ then $z_{k,k+1,\ii}$ is
equal to $f_{k+1}^a$ where $a=1,2$ depending on whether $k+1 \in \ii$, but either way we get that $f_{k+1}=0$. Then $z_{k,k+2} = f_{k+2}^a$ for $a=1,2$, and so forth. Therefore $f_k=0$
only intersects the vanishing of $I_{\ii}$ at the origin (as sets). It is clear that, on the open set where $f_k \ne 0$ for all $k \in \ii$, the polynomials $z_{i,j,\ii}$ and $y_{i,j}$
have the same vanishing (as schemes), since they differ by a unit. The same cone argument shows that $I_{\ii}$ gives the reduced structure at the origin.\end{proof}

\begin{remark} In particular, $I_{\emptyset}$ is contained in every ideal, and the category $\mTLC_1$ is manifestly $R/I_{\emptyset}$-linear. \end{remark}

\begin{remark} Let $Z$ be the union of all Weyl lines in $V$. The previous results should lead one to guess that the Temperley-Lieb algebra should be connected to the geometry of the
$S_{n+1}$ action on $Z$ via $\mTLC$, in much the same way that the Hecke algebra is connected to the reflection representation via $\mHC$ (see \cite{Soe4}). However, at the moment we
have no way to formulate the category $\mTLC$ in terms of coherent sheaves on $Z \times Z$ (i.e. $R/I_{\emptyset}$-bimodules) or the derived category thereof. Describing $\mTLC$ using sheaves on
$Z$ seems like an interesting question.

As an example of the difficulties, let $U_i$ be the bimodule $R/I_i \otimes R/I_i \{-1\}$, where the tensor is over $R^{s_i}$; this should be the equivalent of the Soergel bimodule $B_i$.
Then there is a degree 1 map $R/I_{\emptyset} \to U_i$ sending $1$ to $x_i \ot 1 - 1 \ot x_{i+1}$ (the boundary dot on the top), but there is no degree 1 map $U_i \to R/I_{\emptyset}$
(the boundary dot on the bottom); such a map should send $1 \ot 1$ to $1$. There is only a degree 3 map, sending $1 \ot 1$ to $f_i$ (the boundary dot with a double dot). A similar problem
occurs again: the trivalent vertex seems to be defined only in one direction. \end{remark}

Now we describe briefly the topological intuition associated with the category $\mTLC$, and another way to view $I_{\ii}$. These remarks will not be used in the remainder of the paper,
nor will we give a proof. The reader should be acquainted with the section on $\mathfrak{sl}_2$-foams in Vaz's paper \cite{Vaz}.

\begin{remark} Let $\mc{F}$ be the functor from $\mc{TLC}_1$ to the category of disoriented cobordisms $\Foam_2$, as defined in Vaz's paper. If $f_i$ is the double dot colored $i$, then
one can easily see that $\mc{F}$ sends $f_i$ to a tube connecting the $i$th sheet to the $(i+1)$th sheet, with a disorientation on it. If the double dot appears in a larger morphism
$\phi$, such that in $\mc{F}(\phi)$ the $i$th sheet and the $(i+1)$th sheet are already connected by a saddle or tube, then adding another tube between them does nothing more than add a
disoriented handle to the existing surface. Note that the map $\phi_{\ii}$ previously defined will connect the $i$th sheet to the $(i+1)$th sheet for any $i \in \ii$.

Suppose that the $i$th, $(i+1)$th, and $(i+2)$th sheets are all connected in a cobordism. Then $f_i$ adds a handle on the left side of the $(i+1)$th sheet, $f_{i+1}$ adds a handle on the
right side, and these two disoriented surfaces are equal up to a minus sign in $\Foam_2$. This fact is essentially the statement that:

\igc{.8}{2dotswitch}

In other words, the algebra $\Bbbk[f_1,\ldots,f_n]$ maps to $\Foam_2$, sending $f_i$ to the disoriented tube between the $i$th and $(i+1)$th sheet. The ideal $I_{\ii}$ is clearly in the
kernel of this action when applied to the cobordism $\mc{F}(\phi_{\ii})$. In fact, it is precisely the kernel, using the argument of Proposition 4.2 in \cite{MV}: for any distinct
monomials in a basis for $R/I_{\ii}$, their image in $\Foam_2$ will have independent evaluations with respect to some closure of the cobordism. We do not do the calculation here. The
usual arguments involving adjoint pairings imply that the faithfulness of the functor $\mc{F}$ can be checked on $\Hom(\emptyset,\ii)$. Therefore the functor $\mc{F}$ is faithful.
\end{remark}

%
\subsection{Proof of Generation}
\label{subsec-pfprop}
%

\begin{prop}\label{prop-generators}
  The TL ideal $I_{\emptyset}$ is generated by $y_{i,j}\define
  f_if_j(f_i+2f_{i+1}+2f_{i+2}+\ldots+2f_{j-1}+f_j)$ over all $1\le i < j \le n$.

  The TL ideal $I_{\ii}$ is generated by all $z_{i,j,\ii} \define \frac{y_{i,j}}{g_ig_j}$
  where $g_i=f_i$ if $i \in \ii$, $g_i=1$ otherwise.
\end{prop}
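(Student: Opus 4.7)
The containment $\langle z_{i,j,\ii}\rangle \subseteq I_{\ii}$ is exactly Proposition \ref{prop-containment}; the task is the reverse. Since $\mc{TLC}_1$ is $\mc{HC}_1$ modulo the ideal generated by the 6-valent vertex, or equivalently by any of (\ref{AAA})--(\ref{EEE}), we have $f \in I_{\ii}$ iff $f\cdot\phi_{\ii}$ can be expanded in $\mc{HC}_1$ as a $\Bbbk$-linear combination of graphs each containing some $\alpha_k$ (the 6-valent vertex decorated with a red and a blue dot, as in Remark \ref{puncturedremark}). It therefore suffices to show: for every $\mc{HC}_1$-graph $\gamma$ with outer boundary $\ii$ containing some $\alpha_k$, the unique $g(\gamma)\in R$ with $\gamma = g(\gamma)\cdot\phi_{\ii}$ lies in $\langle z_{i,j,\ii}\rangle$.

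To establish this, I would excise a small open neighborhood of the distinguished $\alpha_k$, producing a graph $\gamma'$ on the annulus $A$ whose inner boundary carries $k(k+1)k$ with two dots and whose outer boundary is $\ii$. The plan is to reduce $\gamma'$ into a sum of the canonical nested-cycle configurations (\ref{case1})--(\ref{case4}) decorated with double dots in the leftmost region; membership in $\langle z_{i,j,\ii}\rangle$ is then immediate from the definition. The reduction proceeds color by color, working outside-in. A color $l$ distant from both $k$ and $k+1$ slides freely past the inner boundary; after a cut of $A$ into a simply-connected region, Proposition \ref{prop-reduction} reduces its graph to a simple forest with double dots, and the double dots are forced to the leftmost region. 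A color $l$ adjacent to exactly one of $k, k+1$ may have cycles that genuinely encircle the puncture; once the interior of such a cycle is already in reduced form, identity (\ref{circlearoundf}) (or (\ref{needlearoundf}), if $l\in\ii$) converts the cycle into a $\partial_l$-factor applied to its interior, producing precisely one shell of the concentric-cycle pattern in (\ref{case1})--(\ref{case4}). Finally, for $l = k$ or $k+1$, tree reduction applies inside $A$ because only distant colors can overlay a one-color tree in $\mc{TLC}$; together with the dot-forcing rules, this forces every component attached to the inner boundary to take the form of one of the three dotted legs of $\alpha_k$ extended by a simple tree toward the outer boundary.

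The main obstacle is the inductive bookkeeping once multiple puncture-encircling cycles of different colors are interleaved: a cycle in color $l$ may be blocked from sliding past a cycle in an adjacent color, and the order in which one peels cycles matters. I would organize the induction by the total number of puncture-encircling cycles together with a lexicographic complexity on colors, always processing the outermost cycle first and invoking the distant-sliding rules to commute past any non-adjacent cycles that lie outside it. A secondary subtlety is that the two dots of $\alpha_k$ are the source of the seed polynomial $f_k f_{k+1}(f_k+f_{k+1})$ inside $y_{k,k+1}$; one must check that each subsequent application of $\partial_l$ arising from collapsing an $l$-shell produces exactly the coefficients $f_i + 2f_{i+1} + \cdots + f_j$ in $y_{i,j}$, mirroring the construction in Proposition \ref{prop-containment}. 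Once the iteration terminates, $\gamma$ is expressed as $h\cdot z_{i,j,\ii}\cdot\phi_{\ii}$ for some $h\in R$, completing the proof.
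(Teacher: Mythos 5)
Your overall framework is the paper's: excise a neighborhood of the distinguished $\alpha_k$, view the complement as a graph on the punctured disk with inner boundary $k(k+1)k(k+1)$ (not $k(k+1)k$ — the $6$-valent vertex has two dots already attached, leaving four open legs) and outer boundary $\ii$, and reduce to the canonical circle/needle pictures of Section~\ref{subsec-TLidealgens}. But the implementation has a genuine gap: you never analyze the connectivity of the $k$- and $(k+1)$-graphs near the puncture, and you never invoke the vanishing properties (\ref{alphaprop1})--(\ref{alphaprop3}) of $\alpha_k$. These are not optional. Tree reduction, which you lean on for the colors $k$ and $k+1$, only applies to acyclic graphs; the hard part is precisely what happens when the interior blue or red legs are joined to each other (either directly or by a component wrapping the puncture). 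The paper's Steps 1 and 2 show that these configurations either vanish outright — because a blue tree connecting both interior blue legs encloses a region whose reduction lands in the zero picture (\ref{alphaprop1}), and similarly for a wrapping component and (\ref{alphaprop2}) — or can be broken down to lower connectivity by forcing double dots across the enclosing arc. Without these steps you cannot justify the claim that every component at the inner boundary is ``one of the three dotted legs of $\alpha_k$ extended by a simple tree.''

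Your processing order is also backwards. You propose to peel cycles \emph{outside-in}; the paper peels \emph{inside-out}, and for a reason: the key cancellation is that the innermost circle/needle, if its color is distant from everything remaining inside, slides through the puncture and becomes an empty needle, which is zero by (\ref{needle}). Processing from the outside gives you no way to trigger this vanishing, and consequently no control over which nested configurations actually survive; you would be stuck with, say, a $k+5$-circle innermost, which the outside-in order cannot eliminate. Relatedly, the case analysis (\ref{case1})--(\ref{case4}) is governed by whether each of the two interior leg colors connects to the external boundary or terminates in a boundary dot — the paper's Cases 1, 2, 3 — and your proposal collapses this case structure into a single ``reduce color by color'' pass that does not record it. The ``seed polynomial'' heuristic ($\alpha_k \leadsto y_{k,k+1}$) only applies when all four legs of $\alpha_k$ are capped with dots; in Cases 1 and 2 one or two legs escape to the outer boundary, and the resulting generator is $z_{k,j,\ii}$ with a needle rather than a circle, which your bookkeeping would not produce.
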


We wish to determine the ideal generated by $\alpha_k$ inside $\HOM(\emptyset,\ii)$, for $\ii$ non-repeating. As discussed in Remark \ref{puncturedremark} (where $\alpha_k$ is defined),
our goal is to take any graph $\Gamma$ on the punctured plane, with $\ii$ as its outer boundary and $k(k+1)k(k+1)$ as its inner boundary, plug $\alpha_k$ into the puncture, and reduce it
to something in the ideal generated by the pictures of Section \ref{subsec-TLidealgens}.

\plabeltwo{$\alpha_k$}{692}{248}{$\Gamma$}{673}{258}
\igc{1.2}{punctureddisk}

Our coloring conventions for this chapter will be that blue always represents the index $k$, red represents $k+1$, and other colors tend to be arbitrary (often, the
number of other colors appearing is also arbitrary). However, it will often happen that colors will appear in increasing or decreasing sequences, and these will be
annotated as such.  Note that blue or red may appear in the outer boundary as well, but at most once each.

Let us study $\Gamma$, and not bother to plug in $\alpha_k$. The only properties of $\alpha_k$ which we need are the following:

	\begin{equation} \label{alphaprop1} \ig{.8}{alphaprop1} \end{equation}
	This follows from (\ref{dot6surrounded}), or just from isotopy. The same holds with colors switched.
	
	\begin{equation} \label{alphaprop2} \ig{.8}{alphaprop2} \end{equation}
	This is because the diagram reduces to a $k$-colored needle, with $f=f_{k+1}(f_k+f_{k+1})$ inside.  But $f$ is
	fixed by $s_k$, so it slides out of the needle, and the empty needle is equal to 0. A similar
	equality holds with colors switched.
	
	 \begin{equation} \label{alphaprop3} \ig{.8}{alphaprop3} \end{equation}
	This follows from the above and the dot forcing rules.
	
	The final property we use is that any graph only using colors $<k-1$ or $>k+2$ can slide freely across the puncture.

Note however that, say, an arbitrary $k-3$ edge can not automatically slide across the puncture, because a $k-2$ edge might be in the way, and this could be in turn
obstructed by a $k-1$ edge, which can not slide across the blue at all.

The one-color reduction results apply to any simply-connected planar region, so we may assume (without even using the relation (\ref{switch})) that in a simply
connected region of our choice, the $i$-graph for each $i$ is a simple forest with double dots. Any connected component of an $i$-graph that does not encircle the
puncture will be contained in a simply-connected region, and hence can be simplified; this will be the crux of the proof. The proof is simple, but has many cases.

\begin{remark} We will still need to use relation (\ref{switch}) as we simplify graphs. \end{remark}

We will treat cases based on the ``connectivity" of $\Gamma$, that is, how many of the blue and red boundary lines in the inner and outer boundary are connected with each
other. We will rarely perform an operation which makes the graph more connected. At each stage, we will reduce the graph to something known to be in the ideal, or break
edges to decrease the connectivity. We call an edge coming from the puncture an \emph{interior line} and one coming from the outer boundary an \emph{exterior line}.

Note also that any double dots that we can move to the exterior of the diagram become irrelevant, since the picture with those double dots is in the ideal generated by
the picture without double dots. Also, any exterior boundary dots are irrelevant, since they are merely part of the map $\phi_{\ii}$ and do not interfere with the rest
of the diagram at all.

\emph{Step 1:} Suppose that the two interior red lines are in the same component of $\Gamma_{k+1}$. Then there is some innermost red path from one to the other, such
that the interior of this path (the region towards the puncture) is simply-connected. Applying reductions, we may assume that the $k$-graph in this region consists of
a blue boundary dot with double dots, and the $k+1$-graph and $k+2$-graph each consist only of double dots. We may assume all double dots occur right next to one of the red
lines coming from the puncture. The current picture is exactly like that in (\ref{alphaprop1}), except that there may be double dots inside, and other colors may be
present (also, there could be more red spokes emanating from the red arc, but these can be ignored or eliminated using (\ref{assoc1}) and tree reduction). However, the double dots may
be forced out of the red enclosure at the cost of potentially breaking the red edge, and breaking it will cause the two red interior lines to be no longer in the same
connected component. If there are no double dots, then all the remaining colors (which are $<k-2$ or $>k+1$) may be slid across the red line and out of the picture.
Hence we are left with the exact picture of (\ref{alphaprop1}), which is zero.

Thus we may assume that the two red lines coming from the puncture are not in the same component. The same holds for the blue lines.

\emph{Step 2:} Suppose that the component of one of the interior blue lines wraps the puncture, creating an internal region (which contains the puncture). Again, reducing in that
internal region, the other interior blue line can not connect to the boundary so it must reduce to a boundary dot (with double dots), the reds may not connect to each other so each
reduces to a boundary dot, and as before we are left in the picture of (\ref{alphaprop2}) except possibly with double dots and other colors. If there are no double dots, all other colors
may be slid out, and the picture is zero by (\ref{alphaprop2}). Again, we can put the double dots near the exterior, and forcing them out will break the blue arc. It is still possible
that some other cycle still allows that component to wrap the puncture; however, this process need only be iterated a finite number of times, and finitely many arcs broken, until that
component no longer wraps the puncture.

So we may assume that the component of any interior line, red or blue, does not wrap the puncture. That component is contained in a simply-connected region, so it
reduces to a simple tree. Hence, we may assume that the components of interior lines either end immediately in boundary dots, or connect directly to an external line of
the same color (at most one such exists of each color).

\emph{Step 3:} Suppose that there is a blue edge connecting an internal line directly to an external one. Consider the region $\Gamma'$:

\plabel{$\Gamma'$}{714}{252}
\igc{.8}{punctureddisk2}

Then $\Gamma'$ is simply-connected. Other colors in $\Gamma$ may leave $\Gamma'$ to cross through the blue line; however, the colors $k-1,k,k+1$ may not. Therefore,
reducing within $\Gamma'$, we may end the internal blue line in a boundary dot and eliminate all other instances of the color blue (since they become irrelevant double dots on the
exterior), reduce red to a simple forest where the two interior lines are not connected (again, ignoring irrelevant double dots), and reduce $k-1$ to either the empty diagram or an
external boundary dot (depending on whether $k+1 \in \ii$). Once this has been accomplished, the absence of the color $k-1$ implies that we may slide $k-2$ freely across the
puncture! The color $k-2$ can be dealt with in the entire disk, which is simply connected, so it reduces to the empty diagram or an external boundary dot (depending on whether $k+2
\in \ii$), with extraneous double dots. Then we may deal with color $k-3$, and so forth.

Thus, the existence of the blue edge implies that all colors $<k$ can be ignored: they appear in irrelevant double dots, in irrelevant boundary dots, or not at all. Similarly, the
existence of a red edge allows us to ignore all colors $>k+1$.

\emph{Step 4:} Let us only consider components of graphs which do not meet the internal boundary.

\begin{lemma} Consider a component of a graph on a punctured disk, which does not meet the internal boundary, and which meets the external boundary at most once. Then
it can be reduced to one of the following, with double dots on the exterior: the empty graph; a boundary dot; a circle around the puncture; a needle coming from the
external boundary, with its eye around the puncture. \end{lemma}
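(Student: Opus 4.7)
The plan is to follow the proof of Proposition~\ref{prop-reduction}, but carefully tracking what changes when the puncture is present. The key dichotomy is whether the one-color component is null-homotopic in the punctured disk or encircles the puncture; once this is settled, everything else is an iterated application of tree reduction, cycle reduction via the needle rule (\ref{needle}), and the dot-forcing rules.

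If the component is null-homotopic, I would cut the punctured disk along a radial arc from the puncture to the outer boundary, chosen to avoid the component (possible since the component does not encircle the puncture and meets the external boundary at most once). The result is a simply-connected region to which Proposition~\ref{prop-reduction} applies verbatim; since the component is connected and meets the boundary of that region at most once, it reduces to either the empty graph or a single boundary dot, with all surviving double dots moved to the exterior.

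If the component does encircle the puncture, pick an innermost encircling cycle $C$ of the component. By the innermost choice, no cycle of the component strictly inside $C$ encircles the puncture, so every interior cycle is null-homotopic in the annular interior and reduces to $0$ by the same argument as in Proposition~\ref{prop-reduction} (after cutting the annular interior by a radial arc). The remaining interior structure is a collection of trees attached to $C$; isotopy in the annular interior brings each tree adjacent to $C$, and tree reduction together with (\ref{unit}) either absorbs each tree into $C$ or converts it into a double dot on $C$. These double dots are then forced across $C$ via the dot-forcing rules; each either slides out cleanly or breaks $C$, and in the latter case the resulting graph has strictly fewer encircling cycles and we iterate. Applying the analogous reduction to the annular region outside $C$ eliminates all stray structure there except, possibly, a single tree joining $C$ to the unique external boundary point of the component, which tree reduction replaces by a single edge. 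Straightening by isotopy yields either a bare circle around the puncture or a needle from the external boundary with its eye around the puncture.

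The main obstacle is the bookkeeping when the component has several encircling cycles and nontrivial tree structure both inside and outside $C$. One must induct on, for instance, the number of encircling cycles plus the number of trivalent vertices, to guarantee termination and to verify that only the four claimed configurations can arise. No new relations are needed beyond those already invoked in Proposition~\ref{prop-reduction}, but the interaction between dot-forcing (which can break $C$ and drop the encircling count) and the annular topology needs careful tracking to see that the iteration terminates in exactly one of the four listed normal forms.
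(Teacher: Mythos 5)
Your overall strategy matches the paper's: split on whether the component encircles the puncture, reduce the null-homotopic case via Proposition~\ref{prop-reduction} after cutting along a radial arc, and reduce the encircling case to a single cycle with tree branches that tree-reduce away. The null-homotopic case and the final tree-branch analysis are handled correctly.

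The gap is exactly where you flag it, but the mechanism you propose to close it is insufficient. If the component has two or more concentric encircling cycles, say an innermost $C$ and a further one $C'$ joined by a spoke, then the complementary region between them is annular and cannot be rendered simply-connected by a radial cut, since any such cut must cross $C'$; ``applying the analogous reduction to the annular region outside $C$'' then stalls on $C'$, which is neither a tree nor contained in a simply-connected piece. Dot-forcing does not rescue this: if the inside of $C$ reduces cleanly with nothing to push across, $C$ never breaks and the encircling count never drops, yet $C'$ persists. The paper's remedy is a specific application of relation~(\ref{assoc1}) --- the ``unwrap'' equality --- which converts such an annular complementary region into a null-homotopic bubble, after which Proposition~\ref{prop-reduction} eliminates it; this is what reduces the region count to two, or equivalently the encircling cycles to one. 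That move is the load-bearing ingredient your proposed induction on cycle count would actually need, and it is not subsumed by tree reduction or dot-forcing as already invoked in Proposition~\ref{prop-reduction}.
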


\begin{proof} Suppose that the component splits the punctured plane into $m$ regions. If the component is contained in a simply-connected part of the punctured plane, we are done. This
is always true for $m=1$. So we may suppose that $m\ge 2$ and we have two distinguished regions: the external region, and the region containing the puncture. Any other region is one of
two kinds, as illustrated in the following equality (due to (\ref{assoc1}):

\igc{.8}{unwrap}

On the right side we have a region which is contained in a simply-connected part, and thus can be eliminated by reduction (see Proposition \ref{prop-reduction}). On the left side the
region is not contained in a simply-connected part, nor does it contain the puncture. However, any such region can be altered, using (\ref{assoc1}) as in the heuristic example above,
into a cycle of the first kind. Therefore, we may assume there are exactly 2 regions.

In the event that there are two regions, we have a cycle which surrounds around the puncture, and may have numerous branches into both regions, internal and external. However, each
branch must be a tree lest another region be created. These trees reduce in the usual fashion, and therefore the internal branches disappear, and the external branches either disappear
or connect directly to the single exterior boundary. Thus we have either a needle or a circle. Double dots, as usual, can be forced out of the way possibly at the cost of breaking the
cycle, and reducing to the case $m=1$. \end{proof}

Let us now examine the remaining cases. We shall ignore all parts of a graph which are double dots on the exterior, or are external boundary dots.

\emph{Case 1:} Both a blue edge and a red edge connect an internal line to an external line. Then, as in Step 3, all other colors can be ignored, and the entire
graph is

\igc{.8}{alphasomedots}

This, as explained in Section \ref{subsec-TLidealgens}, is $z_{k,k+1,\ii}$.

\emph{Case 2:} A blue edge connects an internal line to an external line, and both red internal lines end in boundary dots. As discussed in Step 3, we may ignore all colors
$<k$, and both colors $k$ and $k+1$ do not appear in a relevant fashion outside of what is already described. We may ignore the presence of any double dots. However,
there may be numerous circles and needles colored $\ge k+2$ which surround the puncture and cross through the blue line, in an arbitrary order.

\igc{.8}{case2ex}

\begin{claim} The sequence of circles and needles can be assumed to form an increasing sequence of colors, from $k+2, k+3, \ldots$ until the final color, and that only
the final color may be a needle. \end{claim}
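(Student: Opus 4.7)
The plan is to normalize the pattern of concentric circles and needles by alternating distant-color sliding with one-color tree reduction, and then to show the normalized form must be the increasing sequence $k+2, k+3, \ldots, l$ with at most the outermost layer being a needle.

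First, I would use distant sliding (relations (\ref{R2}) through (\ref{dotslidefar})) to freely permute any two consecutive layers whose colors differ by at least $2$; among the colors $\ge k+2$, only neighbors $j, j+1$ fail to be distant, so iterating reduces to a weakly increasing sequence in which consecutive pairs are either equal or adjacent. To remove duplicates, observe that if two consecutive layers both have color $j$, the annular region between them contains only a single arc of blue (in Case 2, all colors $<k$ have been absorbed and red reduces to boundary dots, and no other layer of color $\ge k+2$ lies between them by consecutiveness). Because blue is distant from $j$, cutting along the blue arc turns the annulus into a simply-connected disk with empty interior; the two $j$-boundary arcs then merge via tree reduction (Proposition \ref{prop-reduction}), collapsing two $j$-layers into one.

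Next, I would show the innermost color is exactly $k+2$ and there are no gaps. Suppose the innermost layer has color $c_1 \ge k+3$. Then $c_1$ is distant from both $k$ and $k+1$, the only colors present inside it. The $\alpha_k$-core with its dots reduces to a polynomial $f \in \Bbbk[f_k, f_{k+1}]$ (alongside the blue-strand-out), and distant sliding lets us maneuver so that the $c_1$-circle effectively encloses $f$ in a simply-connected sub-region. Applying (\ref{circlearoundf}) yields $\partial_{c_1}(f)\, f_{c_1} = 0$, since $s_{c_1}$ fixes $f_k$ and $f_{k+1}$. An analogous argument disposes of any gap between $c_a$ and $c_{a+1}$: after swapping any adjacent-colored content out of the way by distant sliding, the outer layer finds itself surrounded by distant content and vanishes by the same circle-around-invariant-polynomial identity.

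Finally, to see that only the outermost layer may be a needle, consider a needle of color $j$ at an inner position. By the strict increase established above, the layer immediately outside has color $j+1$. The needle's stem is a $j$-strand that must cross the $(j+1)$-arc; but $j, j+1$ are adjacent (so no 4-valent crossing is available) and 6-valent vertices are zero in $\mc{TLC}$, so this stem-through-arc region must be simplified via (\ref{switch}), which effectively converts the inner needle into an inner circle. The main obstacle is the third step: the interior of the $c_1$-circle is a punctured rather than simply-connected region, so (\ref{circlearoundf}) does not apply verbatim. The delicate point is to use distant sliding to reposition the blue strand (the sole obstruction to simple-connectedness of the $c_1$-interior) so that the $c_1$-circle effectively encloses an $s_{c_1}$-invariant polynomial in a simply-connected sub-region, perhaps by cutting along blue in the spirit of the duplicate-removal step above.
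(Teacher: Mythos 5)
Your overall plan — normalize the sequence of layers first (by sorting with distant sliding, then removing duplicates), and then argue the normalized form must be the one in the claim — differs substantively from the paper's proof, which works inductively from the innermost layer outward, killing or absorbing each layer as it goes. Unfortunately your normalize-first strategy breaks at Step 1. Distant sliding only lets you transpose two consecutive layers whose colors differ by at least~$2$; adjacent-colored layers cannot be swapped. The resulting ``partial bubble sort'' need not terminate in a weakly increasing sequence: for instance, the ordering $k+3,\,k+2,\,k+4$ (inside-out) can only be changed to $k+3,\,k+4,\,k+2$, neither of which is weakly increasing, nor even has all consecutive pairs equal or adjacent. The paper never needs to sort; instead it observes that if the innermost layer is not $k+2$, it slides across the puncture (using the explicit property of $\alpha_k$ that colors $>k+2$ or $<k-1$ slide across) and vanishes by~(\ref{needle}), and then it recurses outward, handling a duplicate of color $c$ by (\ref{twolinesannulus}) (derived from~(\ref{iidecomp})) and a ``backtrack'' to a previously-seen color by~(\ref{ipiannulus}) (derived from~(\ref{switch})).

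Your Step 3 has the gap you yourself flag: the interior of the $c_1$-circle is a punctured region, so (\ref{circlearoundf}) cannot be applied directly, and it is not clear how ``repositioning the blue strand'' makes the region simply-connected while still enclosing an $s_{c_1}$-invariant polynomial. The paper's slide-across-the-puncture argument avoids this entirely. Your duplicate-removal argument (``cutting along blue turns the annulus into a disk and tree reduction merges the two $j$-arcs'') is also too vague: after cutting, the two $j$-arcs are still separate components of the $j$-graph in that disk, and tree reduction does not join distinct components; what is actually needed is the idempotent decomposition (\ref{iidecomp}). Finally, your Step 4 misattributes to (\ref{switch}) the effect of ``converting a needle into a circle,'' which that relation does not do, and it only rules out an adjacent-colored layer immediately outside an inner needle — it says nothing about layers of distant color outside the needle. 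The paper's argument instead shows that once a needle of color $m$ appears, no $m$- or $(m\pm1)$-colored layers can lie outside it (they cannot cross the stem), and any color $\geq m+2$ slides through both the needle's eye and the puncture and vanishes, so the needle is necessarily the final layer after reduction.
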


\begin{proof} If the innermost circle/needle is not colored $k+2$, then it may slide through the puncture, and will evaluate to zero by (\ref{needle}). So suppose the innermost is $k+2$.
If it is a needle, not a circle, then there can be no more $k+2$-colored circles, and no $k+3$-colored circles. Color $k+4$ can be pulled through the middle so resolved on the entire
disk, and hence can be ignored, and so too with $k+5$ and higher. This is the ``needle" analogy to the conclusion of Step 3: the existence of a $m$-colored needle around the puncture and
the lack of $m$ or $m+1$ on the interior of the needle will allow us to ignore all colors $ \ge m+1$.

So suppose it is a $k+2$-colored circle. If the next circle/needle is colored $\ge k+4$ then it slides through the $k+2$ circle and the puncture, and evaluates to zero. If
the next circle/needle is \emph{also} colored $k+2$, then we may use the following calculation to ignore it. The calculation begins by using (\ref{iidecomp}).

\begin{equation} \ig{.8}{twolinesannulus} \label{twolinesannulus} \end{equation}

Thus we may assume that the next circle/needle is colored $k+3$. Again, if it is a needle, then we can ignore all other colors, and our picture is complete.

Similarly, the next circle/needle can not be colored $\ge k+5$ lest it slide through, and it can not be colored $k+3$ lest we use (\ref{twolinesannulus}). If it is
colored $k+2$, then we may use the following calculation to ignore it. The calculation begins by using (\ref{switch}), and assumes green and purple are adjacent.

\begin{equation} \ig{.8}{ipiannulus} \label{ipiannulus} \end{equation}

Thus we can assume the next circle/needle is colored $k+4$. If it is a needle, then all colors $k+5$ and higher can be ignored. Additional circles of color $k+2$ could
run through the needle, but these could be slid inwards and reduced as before. So if it is a needle, our picture is complete.

Finally, the next circle/needle can not be colored $\ge k+6$ lest it slide, $k+4$ lest we use (\ref{twolinesannulus}), $k+3$ lest we use (\ref{ipiannulus}), or $k+2$
lest we slide it inside and reduce it as above. Hence it is colored $k+5$, and if it is a needle, we are done. This argument can now be repeated ad infinitum.
\end{proof}

Thus our final picture yields $z_{k,j,\ii}$ as in (\ref{case1}) or (\ref{case2}).

Note that the case of a red edge works the same way, with a decreasing sequence instead of an increasing sequence.

\emph{Case 3:} All the internal lines end in boundary dots. We may assume that the remainder of the graph consists in circles/needles around this diagram, but have no
restrictions at the moment on which colors may appear.

\begin{claim} We may assume that the colors in circles/needles form an increasing sequence from $k+2$ up, and a decreasing sequence from $k-1$ down (these sequences do
not interact, so w.l.o.g. we may assume the increasing sequence comes first, then the decreasing one). Only the highest and lowest color may be a needle. \end{claim}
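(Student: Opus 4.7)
The plan is to adapt the argument of Case 2 to the present situation, where the interior is symmetrically ``empty'' for both the colors adjacent to blue and to red. First I would observe that because all interior lines are capped off with boundary dots, the innermost region around the puncture is, up to double dots on the exterior which can be ignored, entirely free of blue and red. So the innermost circle/needle surrounding the puncture, if colored with anything other than $k-1$ or $k+2$, can be slid across the puncture by the distant sliding property, and then evaluates to $0$ by the needle relation (\ref{needle}). Hence the innermost must be colored $k-1$ or $k+2$.

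Next I would establish that the colors $\le k-1$ and the colors $\ge k+2$ never need to interact: any index $i \le k-1$ is distant from any index $j \ge k+2$, so by distant sliding we may permute the order of circles/needles of one type past the other. This lets us split the concentric arrangement into two independent ``stacks,'' one of colors $\ge k+2$ accumulating outward and one of colors $\le k-1$ accumulating outward, and arrange the $\ge k+2$ stack (say) to sit immediately around the puncture, followed by the $\le k-1$ stack further out.

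Now for the $\ge k+2$ stack, I would run verbatim the argument of Case 2. Inductively, assume the innermost $j$ circles/needles have colors $k+2, k+3, \ldots, k+1+j$ with all of them being circles except possibly the outermost. The $(j+1)$-st concentric curve cannot be colored $\ge k+j+3$, for then it slides freely through everything inside and through the puncture and vanishes by (\ref{needle}); it cannot repeat an already-used color $\le k+j$ because a repeated circle can be absorbed using (\ref{twolinesannulus}) and (\ref{ipiannulus}) (pushing the duplicate inward past one or two concentric lines of adjacent color until it meets a same-color circle or slides free); so it must be colored $k+j+2$. If it is a needle, then no further concentric curves of color $\ge k+j+2$ can appear, so by distant sliding the entire remaining arrangement of higher colors can be resolved on the whole disk, yielding no new contributions. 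The symmetric argument handles the $\le k-1$ stack.

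The main obstacle, and really the only computational step, is justifying the reductions (\ref{twolinesannulus}) and (\ref{ipiannulus}) adapted to the present annular region: namely, that two same-color nested circles, or a sandwich of colors $i, i{\pm}1, i$, can be removed or reduced around the puncture even when other circles are present elsewhere. This works exactly as in Case 2 because the relevant local pictures live in an annular neighborhood between two concentric curves, which can be straightened by isotopy and simplified using (\ref{iidecomp}), (\ref{switch}), and dot-forcing, without reference to what lies further inside or outside. With these annular reductions in hand, the induction goes through and yields the claimed normal form.
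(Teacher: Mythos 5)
The central gap in your argument is that you never address circles or needles colored $k$ (blue) or $k+1$ (red), and in Case~3 these genuinely arise. Your opening step asserts that ``the innermost region around the puncture is \dots entirely free of blue and red,'' and from this you conclude the innermost circle/needle must be colored $k-1$ or $k+2$. But you have conflated the internal \emph{boundary lines} (which, yes, all end in boundary dots) with the constraint on what colors may appear as concentric circles/needles: a separate blue or red connected component can certainly wrap the puncture. Unlike in Case~2, there is no blue or red strand running from the inner to the outer boundary that would force such a circle to cross it and thereby be resolvable inside a simply-connected piece. The paper's proof devotes specific machinery to exactly this: when the innermost circle is colored $k$ or $k+1$ it applies relation~(\ref{alphaprop3}) to fall back to an earlier case; when a $k$-colored circle appears outside a non-empty decreasing sequence it is pushed inward and killed by the variant~(\ref{ipiannulus2}) of~(\ref{ipiannulus}), which uses~(\ref{switch}), (\ref{dotslidenear}), and~(\ref{alphaprop2}); and the symmetric statements handle $k+1$. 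None of this appears in your sketch, and it is not a routine omission—these are the only circle colors that cannot be slid past the puncture at all, so there is no distant-sliding shortcut.

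Your other structural choice, splitting the concentric arrangement into a low stack ($\le k-1$) and a high stack ($\ge k+2$) by distant sliding and then running the Case~2 argument on each, is a fine reorganization of the part you did address: indices $\le k-1$ and $\ge k+2$ really are mutually distant, and the reductions~(\ref{twolinesannulus}) and~(\ref{ipiannulus}) are indeed local to an annular neighborhood. The paper instead keeps a single mixed ordering (increasing sequence, then decreasing sequence, with a w.l.o.g. commutation) and does an induction on the next circle's color, but the two bookkeeping schemes are equivalent where they overlap. The difference that matters is the missing case above; once you add the treatment of blue/red circles via~(\ref{alphaprop3}) and~(\ref{ipiannulus2}), the rest of your sketch goes through.
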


\begin{proof} The method of proof will be the same as the arguments of the previous case.

Consider the innermost circle/needle. If it is colored $k$ or $k+1$, then we may use (\ref{alphaprop3}) to reduce the situation to a previous case. If it is colored
$\ge k+3$ or $\le k-2$ then it slides through the puncture. So we may assume it is $k+2$ or $k-1$. If it is a $k+2$-colored (resp. $k-1$-colored) needle, then the usual
arguments imply that all colors $>k+2$ (resp. $<k-1$) can be ignored. This same argument with needles will always work, so we will not discuss the circle/needle
question again, and speak as though everything is a circle.

Assume that the first colors appearing are an increasing sequence from $k+2$ to $i$ and then a decreasing sequence from $k-1$ to $j$. Note that either sequence may be
empty. If the next color appearing is $\le j-2$ then it slides through the whole diagram and the puncture, and evaluates to zero. If the decreasing sequence is
non-empty and the next color is $j$ then we use (\ref{twolinesannulus}); if it is $\ge j+1$ and $\le k-1$ then we slide it as far in as it will go and use
(\ref{ipiannulus}). If the decreasing sequence is non-empty and the next color is $k$ then one can push it almost to the center, and use the following variant of
(\ref{ipiannulus}):

\begin{equation} \ig{.8}{ipiannulus2} \label{ipiannulus2} \end{equation}

In this picture, green is $k-1$, and is the only thing in the way of the blue circle. The first equality uses (\ref{switch}), and the second equality uses (\ref{dotslidenear}), and
eliminates the terms which vanish due to (\ref{alphaprop2}).

Continuing, if the decreasing sequence is empty and the next color is $k$ then we may use (\ref{alphaprop3}) as above. Any colors which are $\ge k+1$ do not depend on
the increasing sequence, and instead use the exact analogs for the increasing sequence.

Hence, in any case in which the next color appearing is not $i+1$, or $j-1$, or the beginning of a new increasing/decreasing series, we may simplify the diagram to
ignore the new circle. Induction will now finish the proof.
\end{proof}

Therefore, the resulting diagram is equal to $z_{i,j,\ii}$, matching up either with (\ref{case1a}) or (\ref{case4}).

Since every possible graph can be reduced to a form which is demonstrably in the ideal generated by $z_{i,j,\ii}$, we have proven that these elements do in fact
generate the TL ideal $I_{\ii}$.

\section{Irreducible Representations}
\label{sec-repns}

In this section, we may vary the number of strands appearing in the Temperley-Lieb algebra. When $\mTL$ appears it designates the Temperley-Lieb algebra on $n+1$ strands, but
$\mTL_k$ designates the algebra on $k$ strands.

%
\subsection{Cell Modules}
\label{subsec-cellmodules}
%

The Temperley-Lieb algebra has the structure of a cellular algebra, a concept first defined by Graham and Lehrer \cite{GL}. One feature of cellular algebras is that they are equipped with
certain modules known as \emph{cell modules}. Cell modules provide a complete set of non-isomorphic irreducible modules in many cases (such as $\mTL$ in type $A$). Cell modules come
equipped with a basis and a bilinear form, making them obvious candidates for categorification. We will not go into detail on cellular algebras here, or even use their general properties;
instead we will describe the cell modules explicitly and pictorially for the case of $\mTL$, where things are unusually simple. Nothing in this section or the next is particularly
original, and we state some standard results without proof.

\begin{notation} Consider a crossingless matching in the planar strip between $n$ points on the bottom boundary
and $m$ points on the top. We call this briefly an $(n,m)$ \emph{diagram}. In the terminology of \cite{FG},
there are two kinds of arcs in a diagram: \emph{horizontal arcs}, which connect two points on the top (let us
call it a \emph{top arc}), or two points on the bottom (\emph{bottom arc}); and \emph{vertical arcs}, which
connect a point on the top to one on the bottom. Elsewhere in the literature, vertical arcs are called
\emph{through-strands}. An $(n,k)$ diagram with exactly $k$ through-strands (and therefore no top arcs) has an
isotopy representative with only ``caps" (local maxima) and no ``cups" (local minima) so it is called an
$(n,k)$ \emph{cap diagram}. A $(k,n)$ diagram with $k$ through-strands is called a $(k,n)$ \emph{cup diagram}.
\end{notation}

The set of all $(n,m)$ diagrams can be partitioned by the number of through-strands. Any $(n,m)$ diagram with
$k$ through-strands can be expressed as the concatenation of a $(n,k)$ cap diagram with a $(k,m)$ cup diagram
in a unique way. For an illustration of this concept, see Figure \ref{tlfigure}.

In an $(m,m)$ diagram the number $l$ of top arcs equals the number of bottom arcs, and if $k$ is the number of through-strands then $k+2l=m$. We will typically use $k$ and $l$ to
represent the number of through-strands and top arcs in an $(m,m)$ diagram henceforth.

\begin{notation} Let $X$ be the set of all $(n+1,n+1)$ diagrams. Let $\omega$ be the endomorphism of $X$ sending each diagram to its vertical flip. We will write the operation on diagrams of
reduced vertical concatenation by $\circ$: $a \circ b$ places $a$ above $b$, and removes any circles. Let $X_k$ be the set of crossingless matchings with exactly $k$ through-strands.
Let $M_k$ be the set of all $(n+1,k)$ cap diagrams, so that $\omega(M_k)$ is the set of all $(k,n+1)$ cup diagrams. \end{notation}

\begin{figure}
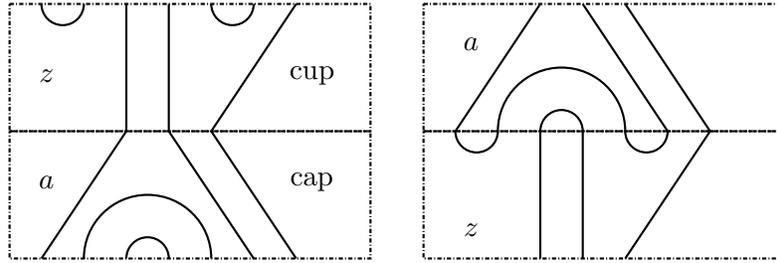

\label{tlfigure}
\labellist
\small\hair 2pt
\pinlabel $a$ at 230 413
\pinlabel $z$ at 230 453
\pinlabel $a$ at 390 465
\pinlabel $z$ at 390 395
\pinlabel cap at 330 413
\pinlabel cup at 330 453
\endlabellist
\igc{1}{capcupdecomp}
\caption{On the left side, a $(7,7)$ diagram with $k=3$ through-strands and $l=2$ top arcs (resp. bottom arcs) is decomposed into a $(7,3)$ cap diagram $a$ composed with a $(3,7)$ cup diagram $z$. On the right side, an element of $\mTL_3$ is obtained by composing $a$ and $z$ in the opposite order.}
\end{figure}

\begin{defn} Let $L_k$ be the free $\Ztt$-module spanned by $M_k$, the $(n+1,k)$ cap diagrams. We place a right $\mTL$-module structure on $L_k$ by concatenation, where circles become factors of $\qtwo$ as usual,
and any resulting diagram with fewer than $k$ through-strands is sent to $0$. This is the \emph{cell module} for cell $k$, and it is irreducible. \end{defn}

\begin{example} The only diagram in $X_{n+1}$ corresponds to the identity map in $\mTL$. The cell module $L_{n+1}$ has rank 1 over $\Ztt$, and its generator is killed by all $u_i$. We
will take this as the definition of the \emph{sign representation} of $\mTL$. \end{example}

\begin{example} The next cell module $L_{n-1}$ has rank $n$ over $\Ztt$, having generators $v_i$, $i=1 \ldots n$ (see Figure \ref{cellmodex}), such that \begin{eqnarray} v_ju_i & = & \lbrace \begin{array}{ccc} \qtwo
v_j & \textrm{ if} & i=j\\ v_i & \textrm{ if} & i \textrm{ and }  j \textrm{ are adjacent} \\ 0 &\textrm{ if} & i  \textrm{ and } j  \textrm{ are distant} \end{array}.  \end{eqnarray} 
\end{example}

\begin{figure}
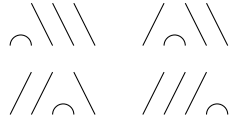

	
	$\ig{.5}{cellmodbasis}$
	\caption{A basis for the cell module $L_{n-1}$, consisting of $(n+1,n-1)$ cap diagrams (here, $n=4$).}
	\label{cellmodex}
\end{figure}

Given a $(n+1,k)$ cap diagram $a$ and a $(k,n+1)$ cup diagram $z$, there are two things we can do: take the composition $z \circ a$ to obtain an element called $c_{z,a}$ of $X_k$; or take
the composition $a \circ z$ to get an element of $\mTL_k$ (there may be additional circles created, and the final diagram may have fewer than $k$ through-strands). Both compositions have
the same closure on the punctured plane. Note that $\omega(c_{z,a})=c_{\omega(a),\omega(z)}$. The seemingly extraneous use of the notation $c_{.,.}$ is standard for cellular algebras.

\begin{prop} There is, up to rescaling, a unique pairing $(,) \colon L_k \times L_k \to \Zttt$ for which $u_i$ is self-adjoint, that is $(au_i,b)=(a,bu_i)$. Given cup diagrams $a$ and $b$
in $M_k$ we evaluate $(a,b)$ by considering the closure of $c_{\omega(a),b} \in \mTL$, or equivalently the closure of $b \circ \omega(a) \in \mTL_k$. If the diagram has nesting number $k$
we return a scalar times $\qtwo$ raised to the number of circles; if it has nesting number $<k$ we return zero. This is precisely the evaluation $\epsilon(c_{\omega(a),b})$ for some
well-defined trace on $\mTL$ supported on nesting number $k$ (which are unique up to rescaling). \end{prop}

%
\subsection{Some Induced Sign Representations}
\label{subsec-quotients}
%

Cell modules are naturally subquotients of the cellular algebra itself, viewed as a free module (see \cite{GL}). For our purposes, we will describe the cell modules as subquotients of
$\mTL$ in a different way, which will be more convenient to categorify diagrammatically. Taking the inclusion $\mTL_J \to \mTL$ for some sub-Dynkin diagram $J$, we can induce the sign
representation of $\mTL_J$ up to $\mTL$. This is the quotient of $\mTL$ by the right ideal generated by $u_i, i \in J$. In a future paper we will describe, for both the Hecke and
Temperley-Lieb algebras, a diagrammatic way to categorify the induction of both the ``sign" and ``trivial" representations of sub-Dynkin diagrams, but for this paper we restrict to a
specific case. For the sub-Dynkin diagram which contains every index except $i$, let $I_i$ be the corresponding ideal (generated by $u_j$ for $j \ne i$), and consider the induced sign
representation $V^i=\mTL/I_i$. Let $l_i=\min(i,n+1-i)$ and let $k_i=n+1-2l_i$. It turns out that we can embed $L_{k_i}$ inside $V^i$, as shown explicitly below, and we shall categorify
both modules accordingly. For this reason, we use $L^i$ to denote $L_{k_i}$. Note that every possible $L_k$ can be achieved as some $L^i$ with the exception of $L_{n+1}$.

For the rest of this section, fix an index $i \in I$. We define a module $V^i$ over $\mTL$ abstractly, and then prove that this module is isomorphic to $\mTL/I_i$. 

\begin{defn} For $0 \le l \le l_i$ (and letting $k=n+1-2l$ as always), let $a_k^i$ be the following $(k,n+1)$ cup diagram with $l$ top arcs, where the innermost top arc always connects
$i$ to $i+1$:

\labellist
\small\hair 2pt
\pinlabel $l=0$ at 255 381
\pinlabel $l=1$ at 367 381
\pinlabel $l=2$ at 255 350
\pinlabel $l=3$ at 367 350
\endlabellist
\igc{.8}{aik}

Let $X_k^i \subset X_k$ consist of all matchings of the form $c_{a_k^i,b}$ for $b\in M_k$. Let $X^i$ be the disjoint union of all $X_k^i$ for $0 \le l \le l_i$, and let $V^i$ be the free
$\Ztt$-module with basis $X^i$. There is a distinguished element $\1$ of this basis, the unique member of $X_{n+1}^i$. Let $\mTL$ act on $V^i$ on the right by viewing elements of $V^i$ as
though they were in $\mTL$, using the standard multiplication rules, and then killing any terms whose diagrams are not in $X^i$. \end{defn}

The elements of $X^i$ exhaust those elements of $X$ where the only \emph{simple} top arcs (those connecting $j$ to $j+1$ for some $j$) connect $i$ to $i+1$. Any crossingless matching
with a simple top arc connecting $j$ to $j+1$ has an expression in $\mTL$ as a monomial $u_{\ii}$ which begins with $u_j$. The converse is also true. Thus $X^i$ are the elements of $X$
for which \emph{every} expression of the matching begins with $u_i$. This motivates the definition.

While something does need to be checked to ensure that this defines a module action, it is entirely straightforward. In the Temperley-Lieb algebra, things are generally easy to prove
because products of monomials always reduce to another monomial (with a scalar), not a linear combination of multiple monomials. Therefore, checking the associativity condition for being
a module, say, involves showing that both sides of an equation are the same diagram in $X^i$, or that both sides are 0. This module is cyclic, generated by $\1$, and $I_i$ is clearly in
the annihilator of $\1$, so that $\mTL/I_i$ surjects onto $V^i$. One could prove the following by bounding dimensions.

\begin{claim} The modules $V^i$ and $\mTL/I_i$ are isomorphic. \end{claim}

There is a (cellular) filtration on $V^i$, given by the span of $X_{\le k}^i$, diagrams with at most $k$ through-strands (call it $V_{\le k}^i$). Clearly, each subquotient in this
filtration has a basis given by $X^i_k$, or in other words by the elements $c_{a_k^i,b}$ for $b \in M_k$. It is an easy exercise that this subquotient is isomorphic to the cell module
$L_k$, under the map sending $b \in M_k$ to $c_{a_k^i,b}$. There is one subquotient for each $0 \le l \le l_i$.

\begin{claim} The module $L^i$ is a submodule of $V^i$. \end{claim}

\begin{proof} Letting $l=l_i$ and $k=k_i$, the final term in the filtration is precisely $L^i \cong V_{k_i}^i$. \end{proof}

Having explicitly defined the embedding $L^i \subset V^i$, we pause to investigate adjoint pairings on $V^i$.

\begin{prop} Consider the $\Zttt$-module of semi-linear pairings on $V^i$ where $(xu_j,y)=(x,yu_j)$ for all $j$. Consider the $l_i+1$ functionals on this space, which send a pairing to
$(\1, c_{a^i_k,\omega(a^i_k)})$ for various $k=n+1-2l$, $0 \le l \le l_i$. Then these linear functionals are independent and yield an isomorphism between the space of pairings and a free
module of rank $l_i+1$. \end{prop}

Note that, using adjunction, one can check that $\qtwo^l(\1, c_{a^i_k,\omega(a^i_k)})=(c_{a^i_k,\omega(a^i_k)},c_{a^i_k,\omega(a^i_k)})$.

\begin{proof} Given diagrams $x,y \in X^i$, the self-adjointness of $u_i$ implies that the value of $(x,y)$ is an invariant of the diagram $y \circ \omega(x)$. In particular,
$(x,y)=(\1,y\omega(x))=(x\omega(y),\1)$, where $y\omega(x)$ refers to the image of this diagram in the quotient $\mTL/I_i$. Therefore, if either $y \omega(x)$ or $x \omega(y)$ is not in
$X^i$ then the value of $(x,y)$ is zero. However, $X^i \cap \omega(X^i) = \{ c_{a^i_k,\omega(a^i_k)} \}$ where this set runs over all $k$ with $0 \le l \le l_i$. Thus the value of the
pairing on all elements is clearly determined by the values of $(\1, c_{a^i_k,\omega(a^i_k)})$ for all such $k$.

Consider the following map $V^i \times V^i \to \Zttt$: fix $k$, and for basis elements $x,y$ send $(x,y)$ to $r \in \Ztt$ if $y \omega(x) = r c_{a^i_k,\omega(a^i_k)} \in \mTL$, and send
$(x,y)$ to zero otherwise. Clearly this is a well-defined semi-linear map (being defined on a $\Ztt$-basis) and $u_j$ is self-adjoint. Thus we have enough pairings to prove independence.
\end{proof}

\begin{remark} Once again, all pairings are defined topologically. The closure of $c_{a^i_k,\omega(a^i_k)}$ has nesting number exactly $k$, which distinguishes the traces. \end{remark}

%
\subsection{Categorifying Cell Modules}
\label{subsec-catfy}
%

Categorifying the sign representation $L_{n+1}$ is easy. If we take the quotient of $\mTLC$ by all nonempty diagrams, we get a category where the only nonzero morphism space is the
one-dimensional space $\Hom(\emptyset,\emptyset)$. This clearly categorifies $L_{n+1}$, and we will say no more.

Consider the quotient of the category $\mTLC_1$ by all diagrams where any color not equal to $i$ appears on the left. Call this quotient $\mV^i_1$. As usual we let $\mV^i_2$ be its
additive grading closure, and $\mV^i$ its graded Karoubi envelope. We will show that $\mV^i \cong \mV^i_2$, so that we really may think of $\mV^i$ entirely diagrammatically without
worrying about idempotents. We claim that $\mV^i$ categorifies $V^i$. Not only this, but the action of $\mTLC$ on $\mV^i$ by placing diagrams on the right will categorify the action of
$\mTL$ on $V^i$.

Any monomial $u_{\ii}$ which goes to zero in $V^i$ is equal to a (scalar multiple of a) monomial $u_{\jj}$ where some index $j \ne i$ appears on the left. Therefore, the corresponding
object $U_{\ii}$ will be isomorphic to $U_{\jj}$, whose identity morphism is sent to zero in $\mV^i_1$ since it has a $j$-colored line on the left. There is an obvious map from $V^i$ to
the Grothendieck group of $\mV^i_2$, and the action of $\mTLC$, descended to the Grothendieck group, will commute with the action of $\mTL$ on $V^i$.

Therefore, $\Hom$ spaces in $\mV^i_1$ will induce a semi-linear pairing on $V^i$, which satisfies the property $(au_j,b)=(a,bu_j)$ because $U_j$ is self-adjoint. As before, once we
determine which pairing this is, our proof will be almost complete.

\begin{lemma} \label{pairinglemma} The pairing induced by $\mV^i_1$ will satisfy $(\1,c_{a^i_k,\omega(a^i_k)})=\frac{t^l}{1-t^2}$ where $k=n+1-2l$. \end{lemma}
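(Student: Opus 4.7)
By the preceding claim, it suffices to compute the graded dimension of $\HOM_{\mV^i}(U_\emptyset, U_{\ii_l})$ for each $l$ with $0 \le l \le l_i$ (where $k = n+1-2l$), taking $\ii_l$ to be any $321$-avoiding reduced word representing $c_{a^i_k, \omega(a^i_k)}$; by the Main Theorem, $U_{\ii_l}$ is well-defined up to isomorphism in $\mTLC$. The base case $l=0$ is immediate: $\HOM_{\mV^i}(U_\emptyset, U_\emptyset)$ is the quotient of $R/I_\emptyset$ by the relations $f_j=0$ for $j\ne i$. Each generator $y_{i',j'}$ of $I_\emptyset$ (Proposition \ref{prop-containment}) is divisible by $f_{i'}f_{j'}$ with $i' \ne j'$, so at least one factor lies in $(f_j : j\ne i)$ and the ideal becomes trivial; the Hom space is $\Bbbk[f_i]$ of graded dimension $\frac{1}{1-t^2} = \frac{t^0}{1-t^2}$.

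\textbf{Case $l \ge 1$.}  The strategy mirrors Section \ref{subsec-prelimhoms}. By Proposition \ref{prop-reduction}, every morphism $U_\emptyset \to U_{\ii_l}$ in $\mTLC_1$ reduces to a sum of graphs in which, for each color $j$, the $j$-graph is a simple forest with double dots placed in the leftmost region. Passing to $\mV^i$ kills every diagram in which a color distinct from $i$ reaches the leftmost region; in particular, all non-$i$ double dots die, and the surviving generators are diagrams in which each color $j \ne i$ contributes a simple forest \emph{without} double dots, whose edges stay clear of the leftmost region.

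Next, the plan is to pin down a single distinguished generator $\phi_l$ of degree $l$ on which every surviving diagram is a $\Bbbk[f_i]$-multiple. The element $c_{a^i_k, \omega(a^i_k)}$ has a nested-cup structure around position $i$ which prescribes how often each color appears in $\ii_l$ (the index $i$ appears $l$ times, and each $i \pm m$ for $1 \le m \le l-1$ appears $l - m$ times) and how the positions interlace. Planarity together with the ``no non-$i$ color on the left'' condition forces the non-$i$ simple forests into an ``onion'' pattern in which the outermost colors $i \pm (l-1)$ supply a boundary dot each while inner colors supply shorter edges or small trees. An induction on $l$ is the natural route: $c_{a^i_k, \omega(a^i_k)}$ is obtained from $c_{a^i_{k+2}, \omega(a^i_{k+2})}$ by wrapping one more nested cup (and the corresponding cap) around position $i$, which at the diagrammatic level corresponds to attaching exactly one new boundary dot to $\phi_{l-1}$, giving $\deg\phi_l = \deg\phi_{l-1} + 1 = l$.

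Finally, we verify that no extra relations survive on $\Bbbk[f_i]\cdot\phi_l$. The relations come from the analog of Proposition \ref{prop-containment} for $\ii_l$, whose generators all contain a factor $f_{i'}$ or $f_{j'}$ with $\{i',j'\} \not\subset \{i\}$, and so vanish modulo $(f_j : j \ne i)$. Therefore $\HOM_{\mV^i}(U_\emptyset, U_{\ii_l}) = \Bbbk[f_i]\cdot\phi_l$, of graded dimension $\frac{t^l}{1-t^2}$, as required.

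\textbf{Main obstacle.}  The delicate step is identifying $\phi_l$ as the \emph{unique} minimum-degree surviving generator in the repeating case $l \ge 2$. Whereas Corollary \ref{cor-homspaces} furnishes a clean single $R$-module generator for non-repeating $\ii$, the repetitions in $\ii_l$ permit many \emph{a priori} distinct simple forests, and the combined constraints of planarity and ``no non-$i$ color on the left'' must be used carefully to sort them all down to a single $\Bbbk[f_i]$-line. The inductive nested cap-cup structure of $c_{a^i_k, \omega(a^i_k)}$ is what makes the inductive step work, but making the planar combinatorics precise---in particular, bounding the graded dimension above to rule out spurious extra generators---is the genuine work of the proof.
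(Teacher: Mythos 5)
Your opening reductions ($l=0,1$) are correct and match the paper's conclusions, but your approach to $l\ge 2$ has a genuine gap that you yourself flag and do not close, and it is precisely the content of the lemma. You propose to compute $\HOM_{\mV^i}(U_\emptyset, U_{\ii_l})$ where $\ii_l$ is a $321$-avoiding reduced word for $c_{a^i_k,\omega(a^i_k)}$. For $l\ge 2$ such a word repeats the index $i$ (and for $l\ge 4$ other indices as well), so Corollary \ref{cor-homspaces} and Proposition \ref{prop-containment} simply do not apply: the Hom space $\HOM_{\mTLC}(\emptyset,\ii_l)$ is not a cyclic $R$-module with a unique generator and a well-understood ideal of relations. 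You invoke ``the analog of Proposition \ref{prop-containment} for $\ii_l$'' and then argue its generators die modulo $(f_j:j\ne i)$, but no such analog is established in the paper or in your argument. The assertion that wrapping one more nested cup ``corresponds to attaching exactly one new boundary dot to $\phi_{l-1}$'' and that all other simple forests can be ruled out by planarity is exactly the combinatorial heart of the lemma, and your ``main obstacle'' paragraph concedes it is unproved. Until you supply an upper bound on $\gdim\HOM_{\mV^i}(U_\emptyset,U_{\ii_l})$, the argument shows at most that the Hom space \emph{contains} $\Bbbk[f_i]\cdot\phi_l$.

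For contrast: the paper does not compute $(\1,c_{a^i_k,\omega(a^i_k)})$ directly, but rather $(u_{\ii},u_{\ii})=\gdim\END_{\mV^i}(U_{\ii})$ for a specially chosen tiered word $\ii$ (so that $u_{\ii}\omega(u_{\ii})=\qtwo^l c_{a^i_k,\omega(a^i_k)}$, making the two numbers differ by the factor $\qtwo^l$). The tiered structure buys exactly the control you are missing: a boundary dot anywhere but the final tier forces the morphism to factor through some $U_{\kk}$ with $u_{\kk}\notin X^i$, hence through a zero object in $\mV^i$; trivalent vertices joining three boundary lines force such a dot; so the only surviving simple forests are the identity on the inner part together with an independent choice of identity or broken line on each of the $l$ final-tier strands, giving $(1+t^2)^l/(1-t^2)$. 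If you want to push your $\HOM(U_\emptyset,U_{\ii_l})$ route through, you will need to manufacture a comparably sharp vanishing argument; the natural move is again to exploit that any sub-sequence $\kk$ of $\ii_l$ with $u_{\kk}\notin X^i$ yields a zero object, but you would still have to classify which simple forests avoid all such factorizations and also respect the no-non-$i$-on-the-left condition, and show there is exactly one of minimal degree $l$.
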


\begin{remark} Taking a $(n+1,n+1)$ diagram and closing it off on the punctured plane, if $m$ is the number of circles and $k$ is the nesting number, then the pairing comes from the
trace on $\mTL$ which sends this configuration to $\qtwo^{l+m-(n+1)}\frac{t^l}{1-t^2}$.

For a closure of an arbitrary diagram, $l+m<n+1$ is possible. However, for any diagram in $X^i \cap \omega(X^i)$ (with extra circles thrown in) we have $l+m \ge n+1$, since removing the
circles yields precisely $c_{a^i_k,\omega(a^i_k)}$ for some $k$. This guarantees that evaluating the formula on an element of $V^i$ yields a power series with \emph{non-negative}
coefficients. \end{remark}

The proof of the lemma may be found shortly below. Temporarily assuming the lemma, the remainder of our results are easy.

\begin{thm} $\mV^i_2$ is idempotent closed and Krull-Schmidt, so that $\mV^i \cong \mV^i_2$. Its Grothendieck group is isomorphic to $V^i$. \end{thm}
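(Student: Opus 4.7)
The plan is to mimic the strategy of the Main Theorem \ref{maintheorem}: identify a candidate set of indecomposables in $\mV^i$ indexed by the basis $X^i$ of $V^i$, use Lemma \ref{pairinglemma} to compute the graded dimensions of the relevant $\HOM$ spaces, and deduce Krull--Schmidt, idempotent closure, and the Grothendieck group statement. For each element of $X^i$ I fix a $321$-avoiding representative $u_{\ii}$ and consider the corresponding object $U_{\ii}$ in $\mV^i$. For any $321$-avoiding $\jj$ with $u_{\jj} \notin X^i$, some reduced word for $u_{\jj}$ has a leftmost letter $u_{j'}$ with $j' \ne i$, so $\id_{U_{\jj}}$ factors through a diagram killed by the quotient and $U_{\jj} \cong 0$ in $\mV^i$. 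Every object of $\mV^i$ is therefore a finite direct sum of grading shifts of the $U_{\ii}$'s from our list, via the Temperley-Lieb isomorphisms already present in $\mTLC$.

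The heart of the argument is computing $\HOM_{\mV^i}(U_{\ii}, U_{\jj})$ for $u_{\ii}, u_{\jj} \in X^i$ through the pairing $(u_{\ii}, u_{\jj}) = (\1, u_{\jj}\omega(u_{\ii}))$, determined by Lemma \ref{pairinglemma}. When $\ii = \jj$, the product $u_{\ii}\omega(u_{\ii})$ pairs each cap of $u_{\ii}$ with the matching cup of $\omega(u_{\ii})$, so it reduces in $V^i$ to $\qtwo^l \cdot c_{a^i_k, \omega(a^i_k)}$, with $l$ the number of cap-cup pairs; the lemma then yields
\begin{equation*} (u_{\ii}, u_{\ii}) \;=\; \qtwo^l \cdot \frac{t^l}{1-t^2} \;=\; \frac{(1+t^2)^l}{1-t^2}, \end{equation*}
which sits in non-negative degrees with degree $0$ coefficient $1$. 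For distinct $u_{\ii}, u_{\jj} \in X^i$, I would adapt the circle-counting exercise from the Main Theorem: either $u_{\jj}\omega(u_{\ii})$ has cup or cap pattern outside $\{a^i_{k''}\}$ (so it vanishes in $V^i$ and the pairing is zero), or its punctured-plane closure has strictly fewer than $n+1$ circles. Since the trace formula $\qtwo^{l'+m-(n+1)}\frac{t^{l'}}{1-t^2}$ has lowest degree $n+1-m$, the condition $m < n+1$ forces the $\HOM$ space into strictly positive degrees.

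With these two computations the arguments preceding the Main Theorem apply verbatim: each $U_{\ii}$ has a one-dimensional degree $0$ endomorphism space and hence no non-trivial idempotents, and no grading shift of $U_{\ii}$ is isomorphic to $U_{\jj}$ for distinct elements of $X^i$. This yields Krull--Schmidt and idempotent-closure of $\mV^i$, since there are no new idempotents to split off. The surjective $\Ztt$-module map $V^i \to K(\mV^i)$ then sends the basis $X^i$ bijectively to the set of isomorphism classes of pairwise non-isomorphic indecomposables, so it is an isomorphism.

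The main obstacle is the circle-counting for distinct matchings: one must verify combinatorially that the punctured-plane closure of $u_{\jj}\omega(u_{\ii})$ has at most $n$ circles whenever $\ii \ne \jj$ in $X^i$ and this product survives in $V^i$. This is the direct analogue of the ``closure has exactly $n+1$ circles iff the two matchings agree'' exercise from the Main Theorem, but with the added subtlety that the middle of the stacked diagram may either join the caps of $u_{\jj}$ and cups of $\omega(u_{\ii})$ into longer arcs (strictly reducing the circle count) or else push the product outside $X^i$ altogether (giving zero), and both cases must be tracked.
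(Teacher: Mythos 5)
Your proposal is correct and follows essentially the same route as the paper: reduce the objects of $\mV^i$ to grading shifts of $U_{\ii}$ for $u_{\ii}\in X^i$, use Lemma \ref{pairinglemma} to translate the Hom-space computation into the circle-counting exercise from the Main Theorem, observe that the resulting trace values land in non-negative degrees with a one-dimensional degree-$0$ part exactly on the diagonal and in strictly positive degrees (or vanish) off it, and conclude idempotent-closure, Krull--Schmidt, and the isomorphism on Grothendieck groups. The only minor remark is that the ``two cases'' you flag at the end (product leaves $X^i$ vs.\ circle count drops) are already handled uniformly by the remark preceding the theorem: a zero pairing is vacuously concentrated in strictly positive degrees, so the argument needs only the single fact that $m<n+1$ for distinct matchings.
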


\begin{proof} It is enough to check that for any $u_{\ii} \ne u_{\jj}$ corresponding to matchings in $X^i$, that $\Hom(U_{\ii},U_{\ii})$ is concentrated in non-negative degrees with a
1-dimensional degree 0 part, and that $\Hom(U_{\ii},U_{\jj})$ is concentrated in strictly positive degrees (see the proof of Proposition \ref{prop-traceisenough}). This is a calculation
using the semi-linear pairing.

Letting $m$ be the number of circles in a configuration on the punctured disk, and $k=n+1-2l$ the nesting number, then the evaluation will be in strictly positive degrees if $m<n+1$, and
will be in non-negative degrees with a 1-dimensional degree 0 part if $m=n+1$ exactly. But this was precisely the calculation in the proof of Lemma \ref{lemma-something}: for arbitrary
crossingless matchings $u_{\ii}$ and $u_{\jj}$, the closure of $u_{\ii} \omega(u_{\jj})$ has fewer than $n+1$ circles if $u_{\ii} \ne u_{\jj}$, and exactly $n+1$ if they're equal.
\end{proof}

\begin{cor} Let $\mL^i$ be the full subcategory of $\mV^i$ with objects consisting of (sums and grading shifts of) $U_{\ii}$ such that $u_{\ii}$ is an element of $V^i_{n+1-2l_i}$. This
has an action of $\mTLC$ on the right. On the Grothendieck group, this setup categorifies the cell module $L^i = V^i_{n+1-2l_i}$. \end{cor}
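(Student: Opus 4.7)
The plan is to deduce this corollary from the preceding Theorem by mimicking its proof, restricting everything to the submodule $L^{l_i} = V^i_{n+1-2l_i}$. First I would verify that $\mL^{l_i}$ is stable under the right action of $\mTLC$: since $L^{l_i}$ is the bottom of the cellular filtration of $V^i$, it is a submodule, so for any $\ii$ with $u_{\ii}\in L^{l_i}$ and any $j$, the element $u_{\ii j}=u_{\ii}\cdot u_j$ again lies in $L^{l_i}$, whence $U_{\ii j}$ is an object of $\mL^{l_i}$.

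Next I would check that $\mL^{l_i}$ is Krull-Schmidt and idempotent-closed. By the theorem just proved for $\mV^i$, any object $U_{\ii}$ of $\mL^{l_i}$ decomposes in $\mV^i$ as $\bigoplus_k U_{\jj_k}\{m_k\}$ with each $\jj_k$ 321-avoiding. Passing to the Grothendieck group $V^i$ gives $u_{\ii} = \sum_k t^{m_k} u_{\jj_k}$, where the right-hand side is expressed in the 321-avoiding basis of $V^i$. Because $L^{l_i}$ is a \emph{submodule} whose $\Ztt$-basis is a subset of the 321-avoiding basis of $V^i$, the relation $u_{\ii}\in L^{l_i}$ forces each $u_{\jj_k}\in L^{l_i}$, i.e.\ each summand $U_{\jj_k}\{m_k\}$ already lies in $\mL^{l_i}$. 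Since $\mL^{l_i}$ is full in $\mV^i$, the endomorphism rings of these indecomposables are unchanged, so Krull-Schmidt and idempotent closure are inherited.

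Finally I would identify $K(\mL^{l_i})$ with $L^{l_i}$ as right $\mTL$-modules. Isomorphism classes of indecomposables in $\mL^{l_i}$ are in bijection with shifts of $U_{\jj}$ for 321-avoiding $\jj$ satisfying $u_{\jj}\in L^{l_i}$, which is exactly the standard $\Ztt$-basis of $L^{l_i}$. The $\Ztt$-linear map $L^{l_i}\to K(\mL^{l_i})$ sending $u_{\jj}\mapsto [U_{\jj}]$ is therefore an isomorphism, and compatibility with the right $\mTL$-action is immediate from the corresponding compatibility in $\mV^i$ together with the closure property established in the first step.

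The only nontrivial point is the closure under direct summands argued in the second paragraph, which relies essentially on the fact that $L^{l_i}=V^i_{n+1-2l_i}$ is the minimal term of the cellular filtration, and hence a genuine submodule rather than merely a subquotient; for higher terms in the filtration one would instead need to mod out by a suitable subcategory, rather than simply take a full subcategory.
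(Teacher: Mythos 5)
Your proof is correct and follows essentially the same approach as the paper, which likewise deduces the corollary from the $\mV^i$ theorem after observing that the subcategory is full (so the pairing is inherited) and closed under the right $\mTLC$ action. Where you go beyond the paper's terse ``the same arguments imply that the Grothendieck group behaves as planned'' is in the second paragraph: you spell out why $\mL^{l_i}$ is closed under passing to direct summands, by decomposing $U_{\ii}$ in $\mV^i$, passing to $K(\mV^i)\cong V^i$, and using that $L^{l_i}$ is a free $\Ztt$-summand spanned by a subset of the 321-avoiding basis so that every summand's class must already lie in $L^{l_i}$. This is exactly the point that the paper leaves implicit, and your closing observation --- that this hinges on $L^{l_i}=V^i_{n+1-2l_i}$ being the \emph{minimal} term of the filtration, hence a genuine submodule, so that a full subcategory suffices whereas higher subquotients would require a further quotient of categories --- accurately explains why the paper's construction categorifies only the bottom cell module of the filtration directly.
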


\begin{proof} That this subcategory is closed under the action of $\mTLC$ is obvious, as is the existence of a map from $L^i$ to the Grothendieck group. We already
know the induced pairing, because the subcategory is full. Therefore the same arguments imply that the Grothendieck group behaves as planned. \end{proof}

\begin{proof}[Proof of Lemma \ref{pairinglemma}] To calculate the pairing, we may calculate $(u_{\ii},u_{\ii}) = \gdim \End(U_{\ii})$ for the following choices of $\ii$:
$\emptyset$, $i$, $i(i+1)(i-1)$, $i(i+1)(i-1)(i+2)i(i-2)$, $i(i+1)(i-1)(i+2)i(i-2)(i+3)(i+1)(i-1)(i-3)$, etc. These are pictured below.

\igc{.8}{aik2}

These sequences are split into subsequences we call ``tiers", where the $m$th sequence adds the $m$th tier. The following property of these sequences is easily verified: each
sequence $\ii$ is in $X^i$, and remains in $X^i$ if one removes any subset of the final tier, but ceases to be in $X^i$ if one removes a single element from any other tier
instead.

Fix $\ii$ nonempty in this sequence, and let $\jj$ be the subsequence with the final tier removed. It is a quick exercise to show that the lemma is equivalent to $\gdim
\End(U_{\ii})=\frac{(1+t^2)^l}{1-t^2}$, where $l$ is the number of elements in the final tier.

Now consider an element of the endomorphism ring. Using previous results, we may assume it is a simple forest, with all double dots on the far left.
Any double dot colored $j \ne i$ will be sent to zero, so we have only an action of the ring $\Bbbk[f_i]$ on the left. This accounts for the $\frac{1}{1-t^2}$ appearing in all the
formulae.

Suppose there is a boundary dot in the morphism on any line not in the final tier. Then the morphism factors through the sequence $U_{\kk}$ where $\kk$ is $\ii$ with that index removed.
As discussed above, $u_{\kk}$ is not in $X^i$ and therefore $U_{\kk}$ is isomorphic to the zero object. See the first picture below for an intuitive reason why such a morphism vanishes.
Hence the only boundary dots which can appear occur on the final tier. It is easy to check that the existence of a trivalent vertex joining three boundary lines will force the existence
of a dot not on the final tier. See the second picture below. Both pictures are for the sequence $i(i+1)(i-1)(i+2)i(i-2)$, and blue will represent $i$ in all pictures in this section.

\igc{.8}{zeroexamples}

Therefore a nonzero endomorphism must be $\1_{U_{\jj}}$ accompanied on the right by either identity maps or broken lines (pairs of boundary dots), because all other simple forests yield a
zero map. Identity maps have degree zero, while broken lines have degree 2. If these pictures form a basis (along with the action of blue double dots on the left), then the graded
dimension will be exactly as desired. An example with $l=3$, two broken lines, and one unbroken line is shown below.

\igc{.8}{goodmapexample}

This spanning set is linearly independent in $\mTLC$ over $\Bbbk$, so any further dependencies must come from having a non-blue color on the left. Consider an arbitrary endomorphism, and
reduce it using the $\mTLC$ relations to a simple forest with all double dots on the left. The actual double dots appearing are ambiguous, since there are polynomial relations in
$\mTLC$, but it is easy (knowing the generators of the TL ideal) to note that these relations are trivial modulo non-blue colors. Hence the spanning set will be linearly independent if
any diagram in $\mTLC$ which started with a non-blue color on the left will still have a non-blue color (perhaps in a double-dot) on the left after reducing to a simple forest. This will
be the case for any diagram with a boundary dot on $\jj$.

Let red indicate any other index, and suppose that red appears on the far left. Regardless of what index red is, unless there is a dot on $\jj$, the identity lines of $\jj$ block
this leftmost red component from reaching any red on the boundary of the graph. Take a neighborhood of a red line segment which includes no other colors and goes to $-\infty$.
Excising this neighborhood, we get a simply-connected region where the only relevant red boundary lines are the two which connect to the ends of the segment. Red then will reduce
to a simple forest with double dots on the left, which in this case yields either a red double dot or a red circle (potentially with more double dots).

\igc{1}{options}

However, no colors adjacent to red can interfere on the interior of a red circle, so the circle evaluates to zero. Therefore, the diagram evaluates to zero or has at least one red double
dot on the left. We may ignore the red double dot and reduce the remainder of the diagram, and so regardless of what else is done, the final result will have a red double dot on the
left. \end{proof}

\vspace{0.1in}
 
\noindent
{\sl \small Ben Elias, Department of Mathematics, Columbia University, New York, NY 10027}

\noindent 
{\tt \small email: belias@math.columbia.edu}

\end{document}